\def\int{\displaystyle\!int}
\def\lim{\displaystyle\!lim}
\def\sum{\displaystyle\!sum}
\def\sup{\displaystyle\!sup}
\def\inf{\displaystyle\!inf}
\def\cap{\displaystyle\!cap}
\def\max{\displaystyle\!max}
\def\min{\displaystyle\!min}
\newtheorem{theorem}{\bf Theorem}[section]
\newtheorem{lemma}{\bf Lemma}[section]
\newtheorem{definition}{\bf Definition}[section]
\newtheorem{proposition}{\bf Proposition}[section]
\newtheorem{corollary}{\bf Corollary}[section]
\newtheorem{remark}{\bf Remark}[section]
\begin{document}

\title{Limiting dynamics for stochastic nonclassical diffusion equations}
\author{Peng Gao
\\[2mm]
\small School of Mathematics and Statistics, and Center for Mathematics
\\
\small and Interdisciplinary Sciences, Northeast Normal University,
\\
\small Changchun 130024,  P. R. China
\\[2mm]
\small Email: gaopengjilindaxue@126.com }
\date{}
\maketitle

\vbox to -13truemm{}

\begin{abstract}
\par
In this paper, we are concerned with the dynamical behavior of the stochastic nonclassical parabolic equation,
more precisely, it is shown that the inviscid limits of the stochastic nonclassical diffusion equations reduces to the stochastic heat equations.
We deal with initial values in $H^{1}_{0}(I)$ and $H^{2}(I)\cap H^{1}_{0}(I)$. When the initial value in $H^{1}_{0}(I),$
we establish the inviscid limits of the weak martingale solution;
when the initial value in $H^{2}(I)\cap H^{1}_{0}(I),$ we establish the inviscid limits of the weak solution, the convergence in probability in $L^{2}(0,T;H^{1}(I))$ is proved.
The results are valid for cubic nonlinearity.
\par The key points in the proof of our convergence results are establishing some uniform estimates and the regularity theory for the solutions of the stochastic nonclassical diffusion equations which are independent of the parameter. Based on the uniform estimates, the tightness of distributions of the solutions can be obtained.
\\[6pt]
{\sl Keywords: Inviscid limits; Singular perturbation; Stochastic nonclassical diffusion equation; Stochastic heat equation; Weak martingale solution; Weak solution; Tightness}
\\
{\sl 2010 Mathematics Subject Classification: 60H15, 35K70, 35Q35, 35A01}
\end{abstract}
\section{Introduction}
\par
Nonclassical parabolic equation
$$u_{t}-\Delta u_{t}-\Delta u+u^{3}-u=0$$
arises as a model to describe physical phenomena such
as non-Newtonian flow, soil mechanics and heat conduction, etc.; see \cite{A1,C1,P1,T1,T2} and
references therein. Aifantis \cite{A1} provides a quite general approach for obtaining these equations.
\par
In a number of applications, the systems are
subject to stochastic fluctuations arising as a result of
either uncertain forcing (stochastic external forcing) or uncertainty
of the governing laws of the system. The need for taking random effects into account in modeling, analyzing, simulating and predicting
complex phenomena has been widely recognized in geophysical and climate dynamics,
materials science, chemistry, biology and other areas. Stochastic partial differential
equations (SPDEs or stochastic PDEs) are appropriate mathematical models for complex systems
under random influences \cite{W3}. The fact that in physical experiments there are always small irregularities
which give birth to a new random phenomenon justifies the
study of equations with noise.
\par
In this paper, we investigate
\begin{equation}\label{13}
\begin{array}{l}
\left\{
\begin{array}{llll}
d(u^{\varepsilon}-\varepsilon u^{\varepsilon}_{xx})+(-u^{\varepsilon}_{xx}+u^{\varepsilon3}-u^{\varepsilon})dt=g(u^{\varepsilon})dB
\\u^{\varepsilon}(0,t)=0=u^{\varepsilon}(1,t)
\\u^{\varepsilon}(0)=u_{0}

\end{array}
\right.
\end{array}
\begin{array}{lll}
{\rm{in}}~I\times(0,T)\\
{\rm{in}}~(0,T)\\
{\rm{in}}~I,
\end{array}
\end{equation}
where $\varepsilon\in [0,1), I=[0,1], T>0.$ This paper is concerned with the asymptotic behavior of solutions of (\ref{13}) as $\varepsilon\rightarrow0.$
\par
For the deterministic nonclassical diffusion equation
$$u_{t}-\varepsilon\Delta u_{t}-\Delta u+u^{3}-u=0,$$
\cite{W1} establishs some
uniform decay estimates for the solutions which are independent of the parameter $\varepsilon$,
then they prove the continuity of solutions as $\varepsilon\rightarrow0.$ Upper semicontinuity of the family of global attractors at $\varepsilon=0$ in the topology of $H^{1}_{0}$ is also established.
\cite{A2} considers the first initial boundary value problem for the non-autonomous nonclassical
diffusion equation. By using the asymptotic a priori estimate method, the authors prove the existence of
pullback attractors and the upper semicontinuity of pullback attractors.
\par
For the stochastic nonclassical diffusion equations, \cite{Z1} concerns the dynamics of this
equation on $\mathbb{R}^{N}$ perturbed by a $\varepsilon$-random term. By using an energy approach, the authors prove the asymptotic compactness
of the associated random dynamical system, and then the existence of
random attractors. Finally, they show the upper semicontinuity of
random attractors in the sense of Hausdorff semi-metric.
\cite{B1,Z2} prove the existence of pullback attractor for stochastic nonclassical diffusion equations on unbounded domains with non-autonomous deterministic and stochastic forcing terms, and by using a tail-estimates method, the authors establish the pullback asymptotic compactness of the random dynamical system.
\par
In recent years, many efforts have been devoted to studying the singularly perturbed nonlinear
SPDEs.
\par
\cite{C2,C3,C4,C5,C6} consider the Smoluchowski-Kramers approximation the singularly perturbed nonlinear
stochastic wave equations. In \cite{L1} relations between the asymptotic behavior for a stochastic wave equation and a heat equation
are considered. The upper semicontinuity of global random attractor and the global attractor of the heat equation is investigated. Furthermore they shows that the stationary solutions of the stochastic wave equation converge in probability to some stationary solution of the heat equation.
\cite{W2} studies a continuity property for the measure attractors of the singularly perturbed nonlinear
stochastic wave equations, any one stationary solution of the limit heat equation
is a limit point of a stationary solution of the singularly perturbed nonlinear
stochastic wave equations. An averaging method is applied to derive effective approximation to a singularly
perturbed nonlinear stochastic damped wave equation in \cite{L2}. \cite{L3} establishes a large deviation principle for the singularly perturbed stochastic
nonlinear damped wave equations. In \cite{L4}, the random inertial manifold of a stochastic damped nonlinear wave equations with singular
perturbation is proved to be approximated almost surely by that of a stochastic
nonlinear heat equation which is driven by a new Wiener process depending on the
singular perturbation parameter.
\par
\cite{R2} establishs the weak martingale solution for stochastic model for two-dimensional second grade fluids and studied their behaviour when $\alpha\rightarrow0$. \cite{D3} studies the asymptotic behavior of weak solutions to the stochastic 3D Navier-Stokes-$\alpha$
model as $\alpha\rightarrow0$, the main result provides a new construction of the weak
solutions of stochastic 3D Navier-Stokes equations as approximations by sequences of
solutions of the stochastic 3D Navier-Stokes-$\alpha$ model. \cite{S4} discusses the relation of the stochastic 3D magnetohydrodynamic-$\alpha$
model to the stochastic 3D magnetohydrodynamic equations by proving a convergence theorem, that is,
as the length scale $\alpha\rightarrow0$, a subsequence of weak martingale solutions of the stochastic 3D
magnetohydrodynamic-$\alpha$ model converges to a certain weak martingale solution of the stochastic 3D magnetohydrodynamic
equations.
\par
However, there are very few results for the limiting dynamics for stochastic nonclassical diffusion equations with singularly perturbed.

\par
Motivated by previous research and from both physical and mathematical standpoints, the following mathematical questions arise naturally which are important from the point of view of dynamical systems:
\begin{itemize}
  \item Does the solution $u^{\varepsilon}$ for (\ref{13}) converge as $\varepsilon\rightarrow0$?
  \item If $u^{\varepsilon}$ converges as $\varepsilon\rightarrow0$, what is the limit of $u^{\varepsilon}$?
\end{itemize}
\par
In this paper we will answer the above problems. The question of asymptotic analysis of partial differential equations when some physical
parameters converge to some limit has always been of great interest.
\par To the best
of our knowledge, it is the first contribution to the literature on this problem.

\par
Through this paper, we make the following assumptions:
\par
H1)
Let $(\Omega,\mathcal {F},\{\mathcal {F}_{t}\}_{t\geq0},P)$ be a
complete filtered probability space on which a one-dimensional
standard Brownian motion $\{B(t)\}_{t\geq 0}$ is defined such that
$\{\mathcal {F}_{t}\}_{t\geq0}$ is the natural filtration generated
by $w(\cdot),$ augmented by all the $P-$ null sets in $\mathcal
{F}.$ Let $H$ be a Banach space, and let $C([0,T];H)$ be the Banach
space of all $H-$valued strongly continuous functions defined on
$[0,T].$ We denote by $L_{\mathcal {F}}^{p}(0,T;H)(1\leq p<+\infty)$ the Banach space
consisting of all $H-$valued $\{\mathcal {F}_{t}\}_{t\geq0}-$adapted
processes $X(\cdot)$ such that
$E(\|X(\cdot)\|^{p}_{L^{p}(0,T;H)})<\infty;$ by
$L_{\mathcal {F}}^{\infty}(0,T;H)$ the Banach space consisting of
all $H-$valued ${\mathcal \{\mathcal {F}_{t}}\}_{t\geq0}-$adapted bounded
processes; by $L_{\mathcal {F}}^{2}(\Omega;C([0,T];H))$ the
Banach space consisting of all $H-$valued $\{\mathcal
{F}_{t}\}_{t\geq0}-$adapted continuous processes $X(\cdot)$ such
that $E(\|X(\cdot)\|^{2}_{C([0,T];H)})<\infty.$ All the
above spaces are endowed with the canonical norm.
\par
H2) For a random variable $\xi$, we denote by $\mathcal{L}(\xi)$ its distribution.
\par
H3) $(\cdot,\cdot)$ stands for the inner product in $L^{2}(I)$.
\par
H4) The letter $C$ with or without subscripts denotes positive constants
whose value may change in different occasions. We will write the dependence
of constant on parameters explicitly if it is essential.
\par
We make the the two different assumptions on $g.$
\par
(A) $g\in C(\mathbb{R})$ and there exists a constant $L>0$ such that
\begin{equation*}
\begin{array}{l}
\begin{array}{llll}
\|g(u)\|_{L^{2}(I)}\leq L(1+\|u\|_{L^{2}(I)})~~\forall u\in L^{2}(I),
\\\|g(u_{1})-g (u_{2})\|_{L^{2}(I)}\leq L\|u_{1}-u_{2}\|_{L^{2}(I)}~~\forall u_{1},u_{2} \in L^{2}(I).
\end{array}
\end{array}
\end{equation*}
\par
(B) $g\in C(\mathbb{R})$ and there exists a constant $L>0$ such that
\begin{equation*}
\begin{array}{l}
\begin{array}{llll}
\|g(u)\|_{L^{2}(I)}\leq L(1+\|u\|_{L^{2}(I)})~~\forall u \in L^{2}(I),
\\\|g(u)\|_{H^{1}(I)}\leq L(1+\|u\|_{H^{1}(I)})~~\forall u \in H^{1}(I),
\\\|g(u_{1})-g(u_{2})\|_{H^{1}(I)}\leq L\|u_{1}-u_{2}\|_{H^{1}(I)}~~\forall u_{1},u_{2} \in H^{1}(I)

.
\end{array}
\end{array}
\end{equation*}
\subsection{Weak martingale solution}
\begin{definition}
A weak martingale solution of (\ref{13})
is a system $\{(\Omega,\mathcal{F},\mathbb{P}),$ $(\mathcal{F}_{t})_{0\leq t\leq T},u,B\},$ where
\par
(1) $(\Omega,\mathcal{F},\mathbb{P})$ is a complete probability space,
\par
(2) $(\mathcal{F}_{t})_{0\leq t\leq T}$ is a filtration satisfying the usual condition on $(\Omega,\mathcal{F},\mathbb{P}),$
\par
(3) $B$ is a $\mathcal{F}_{t}-$adapted $\mathbb{R}-$valued Wiener process,
\par
(4) $u\in L^{p}(\Omega,L^{\infty}(0,T;L^{2}(I)))\cap L^{p}(\Omega,L^{2}(0,T;H^{1}(I)))\cap L^{2p}(\Omega,L^{4}(0,T;L^{4}(I))),$ for every $1\leq p\leq\infty,$
\par
(5) For all $\varphi\in H^{1}_{0}(I),$
\begin{equation*}
\begin{array}{l}
\begin{array}{llll}
[(u(t),\varphi)+\varepsilon(u_{x}(t),\varphi_{x})]-[(u_{0},\varphi)+\varepsilon(u_{0x},\varphi_{x})]+\int_{0}^{t}((u_{x},\varphi_{x})+(u^{3}-u,\varphi))ds
\\~~~~~~~~~~~~~~~~~=\int_{0}^{t}(g(u),\varphi)dB
\end{array}
\end{array}
\end{equation*}
hold $dt\otimes d\mathbb{P}-$almost everywhere.
\par
(6) The function $u(t)$ take values in $L^{2}(I)$ and is continuous with respect to $t$ $\mathbb{P}-$almost surely.
\end{definition}

\par
The first main result of this paper is given in the next statement.
\begin{theorem}\label{Th3}
Let assumption (A) be satisfied, $ T>0$ and $u_{0}\in H^{1}_{0}(I).$ For any $\varepsilon\in [0,\frac{1}{2}],$ there exists a weak martingale solution $\{(\Omega^{\varepsilon},\mathcal{F}^{\varepsilon},\mathbb{P}^{\varepsilon}),(\mathcal{F}^{\varepsilon}_{t})_{0\leq t\leq T},u^{\varepsilon},B^{\varepsilon}\}$ of problem (\ref{13}) such that the following estimates hold for any $1\leq p< \infty:$
\begin{eqnarray}
\label{33}\displaystyle \mathbb{E} \sup\limits_{0\leq t\leq T}(\|u^{\varepsilon}(t)\|_{L^{2}(I)}^{2}+\varepsilon\|u_{x}^{\varepsilon}(t)\|_{L^{2}(I)}^{2})^{\frac{p}{2}}\leq C(p,T),
\\\label{34}\displaystyle \mathbb{E}\left(\int_{0}^{T}(\|u_{x}^{\varepsilon}(t)\|_{L^{2}(I)}^{2}+\|u^{\varepsilon}\|_{L^{4}(I)}^{4})dt\right)^{\frac{p}{2}}\leq C(p,T),
\\\label{35}\displaystyle \mathbb{E} \sup\limits_{0\leq |\theta|\leq \delta\leq 1} \int_{0}^{T}\|u^{\varepsilon}(t+\theta)-u^{\varepsilon}(t)\|_{H^{-1}(I)}^{2}dt\leq C(p,T)\delta,
\end{eqnarray}
where $C(p,T)$ is a constant independent of $\varepsilon.$
\par
Moreover, let $u_{1}$ and $u_{2}$ be two weak martingale solutions of problem (\ref{13}) defined on the same prescribed
stochastic basis $\{(\Omega,\mathcal{F},P),(\mathcal{F}_{t})_{0\leq t\leq T},B\}$ starting with the same initial condition $u_{0},$ then
$$u_{1}=u_{2}~~~~~P-{\rm{a.s.~~~ for~~ all~~}} t\in [0,T].$$
\end{theorem}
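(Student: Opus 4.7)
The plan is to establish existence via a Galerkin scheme combined with tightness and a Skorokhod-type representation, and to prove pathwise uniqueness by an energy estimate on the difference. Let $\{e_k\}_{k\geq 1}$ be the $L^2(I)$-orthonormal basis of Dirichlet eigenfunctions of $-\partial_{xx}$ and let $P_n$ denote the orthogonal projection onto $H_n=\mathrm{span}\{e_1,\dots,e_n\}$. Looking for $u^{\varepsilon,n}(t)=\sum_{k=1}^{n}c_{k}^{n}(t)e_{k}$, the projected equation becomes a finite-dimensional SDE on $(\Omega,\mathcal{F},(\mathcal{F}_{t}),\mathbb{P},B)$; since $I-\varepsilon\partial_{xx}$ is diagonal with eigenvalues $1+\varepsilon\lambda_k>0$, this system is locally Lipschitz, and the cubic term is one-sided monotone, so global existence follows from standard SDE theory.

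The key uniform-in-$(n,\varepsilon)$ a priori bounds come from applying the It\^o formula to $\Phi_{\varepsilon}(u^{\varepsilon,n}):=\|u^{\varepsilon,n}\|_{L^{2}}^{2}+\varepsilon\|u^{\varepsilon,n}_{x}\|_{L^{2}}^{2}$ and then to $\Phi_{\varepsilon}^{p/2}$. Since $du^{\varepsilon,n}=(I-\varepsilon\partial_{xx})^{-1}dv^{\varepsilon,n}$ with $v^{\varepsilon,n}=(I-\varepsilon\partial_{xx})u^{\varepsilon,n}$, the It\^o correction reduces to $\|(I-\varepsilon\partial_{xx})^{-1/2}P_ng(u^{\varepsilon,n})\|_{L^{2}}^{2}\leq\|g(u^{\varepsilon,n})\|_{L^{2}}^{2}$, which is controlled by assumption (A). The cubic term contributes the favourable dissipation $-2\|u^{\varepsilon,n}\|_{L^{4}}^{4}$, and the antidissipative $+u^{\varepsilon,n}$ is absorbed by Gr\"onwall after taking expectation; combining with the BDG inequality on the martingale part yields (\ref{33}) and (\ref{34}), where the initial data contribution $\Phi_{\varepsilon}(u_{0})\leq(1+\varepsilon)\|u_{0}\|_{H^{1}_{0}}^{2}$ is uniformly bounded.

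For the time-increment estimate (\ref{35}) I would use the algebraic inequality $\|u\|_{H^{-1}(I)}\leq\|(I-\varepsilon\partial_{xx})u\|_{H^{-1}(I)}$, obtained by testing with $(-\partial_{xx})^{-1}u$. Thus it suffices to control $v^{\varepsilon,n}(t+\theta)-v^{\varepsilon,n}(t)$ in $H^{-1}$. The deterministic part, $\int_{t}^{t+\theta}[u^{\varepsilon,n}_{xx}+u^{\varepsilon,n}-P_n(u^{\varepsilon,n})^{3}]\,ds$, is estimated using $\|u_{xx}+u\|_{H^{-1}}\lesssim\|u\|_{H^{1}}$ and the one-dimensional embedding $L^{4/3}(I)\hookrightarrow H^{-1}(I)$ yielding $\|u^{3}\|_{H^{-1}}\lesssim\|u\|_{L^{4}}^{3}$; Cauchy--Schwarz in $s$ produces the factor $\delta$. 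The stochastic part is handled by the It\^o isometry and assumption (A). With (\ref{33})--(\ref{34}) this closes (\ref{35}). Then Aubin--Lions compactness (the $H^{1}$-bound from (\ref{34}), $L^{2}$-bound from (\ref{33}), and fractional time regularity from (\ref{35})) gives tightness of $\mathcal{L}(u^{\varepsilon,n})$ in $L^{2}(0,T;L^{2}(I))\cap C([0,T];H^{-1}(I))$. A Skorokhod representation (in Jakubowski's version on the non-metric product space that also carries weak topologies of $L^{2}(0,T;H^{1})$ and $L^{4}(0,T;L^{4})$) produces a new probability basis $(\Omega^{\varepsilon},\mathcal{F}^{\varepsilon},\mathbb{P}^{\varepsilon})$ and a limit $(u^{\varepsilon},B^{\varepsilon})$. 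Linear terms pass to the limit by weak convergence; the cubic nonlinearity is identified by combining a.s.\ strong $L^{2}$-convergence with the uniform $L^{4}$-bound via Vitali's theorem, and the estimates (\ref{33})--(\ref{35}) are inherited by lower semicontinuity.

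For pathwise uniqueness, letting $w=u_{1}-u_{2}$, I would apply It\^o to $\|w\|_{L^{2}}^{2}+\varepsilon\|w_{x}\|_{L^{2}}^{2}$, obtaining
\begin{equation*}
d\bigl(\|w\|^{2}+\varepsilon\|w_{x}\|^{2}\bigr)=\bigl[-2\|w_{x}\|^{2}-2(u_{1}^{3}-u_{2}^{3},w)+2\|w\|^{2}+\|(I-\varepsilon\partial_{xx})^{-1/2}(g(u_{1})-g(u_{2}))\|^{2}\bigr]dt+2(g(u_{1})-g(u_{2}),w)\,dB.
\end{equation*}
Monotonicity of $s\mapsto s^{3}$ gives $(u_{1}^{3}-u_{2}^{3},w)\geq 0$, the Lipschitz assumption in (A) gives $\|g(u_{1})-g(u_{2})\|^{2}\leq L^{2}\|w\|^{2}$, and the operator $(I-\varepsilon\partial_{xx})^{-1/2}$ has $L^{2}$-norm $\leq 1$; taking expectation and applying Gr\"onwall forces $w\equiv 0$. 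The main obstacle I anticipate is step three: keeping every constant independent of $\varepsilon$ while $\varepsilon\downarrow 0$ makes $\Phi_{\varepsilon}$ degenerate to the mere $L^{2}$-norm, so all the dissipative control must be extracted from the time-integrated $\|u_{x}\|^{2}$ and $\|u\|_{L^{4}}^{4}$ terms on the right-hand side, and one must simultaneously confirm that the It\^o correction and the $H^{-1}$-reduction used for (\ref{35}) both behave uniformly in $\varepsilon$.
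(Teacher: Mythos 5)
Your existence argument follows essentially the same route as the paper's: Galerkin projection onto the Dirichlet eigenbasis, It\^o's formula applied to $\|u^{\varepsilon,n}\|_{L^{2}}^{2}+\varepsilon\|u^{\varepsilon,n}_{x}\|_{L^{2}}^{2}$ and to its $p/2$-th power combined with Burkholder--Davis--Gundy and Gr\"onwall for (\ref{33})--(\ref{34}), the time-increment bound proved first for $v=(I-\varepsilon\partial_{xx})u$ and transferred back through the elliptic inequality $\|u\|_{H^{-1}}\leq\|(I-\varepsilon\partial_{xx})u\|_{H^{-1}}$ for (\ref{35}), and then Simon-type compactness, Prokhorov--Skorokhod and Vitali's theorem to pass to the limit. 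The paper works with the classical Skorokhod theorem on the Polish space $C([0,T];\mathbb{R})\times L^{2}(0,T;L^{2}(I))$ and spends a nontrivial step (the $X_{n}/Y_{n_{i}}$ argument with a mollified $g^{\rho}$) verifying that the transferred pair still satisfies the Galerkin equation; you compress this into ``Skorokhod representation,'' which is acceptable for a proposal but is where the real work of the identification lies. The one genuine divergence is the uniqueness claim: the paper asserts it in the theorem but Section 3 contains no proof of it, whereas you supply the natural argument (It\^o on $\|w\|^{2}+\varepsilon\|w_{x}\|^{2}$, monotonicity of $s\mapsto s^{3}$, the Lipschitz bound on $g$, the contraction property of $(I-\varepsilon\partial_{xx})^{-1/2}$, then Gr\"onwall). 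That sketch is correct in substance, but be aware that the paper itself flags, in its list of difficulties, that the BBM term obstructs a naive application of It\^o's formula to $u^{2}$; since $u_{1},u_{2}$ are only known to lie in $L^{\infty}(0,T;L^{2})\cap L^{2}(0,T;H^{1})\cap L^{4}(0,T;L^{4})$, you must justify the It\^o formula for the functional $v\mapsto((I-\varepsilon\partial_{xx})^{-1}v,v)$ in the variational (Pardoux/Krylov--Rozovskii) framework, pairing the drift $u_{xx}-u^{3}+u\in H^{-1}+L^{4/3}$ against $w\in H^{1}_{0}\cap L^{4}$, rather than relying on the finite-dimensional computation available at the Galerkin level.
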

\begin{remark}
If we replace $g(u)$ in (\ref{13}) by $g(t,u)$ and assume that $g(t,u)$ is nonlinear measurable mapping defined on $[0,T] \times L^{2}(I)$ taking values on $L^{2}(I),$ it is continuous with respect to $u$
and there exists  a constant $C$ such that
\begin{equation*}
\begin{array}{l}
\begin{array}{llll}
\|g(t,u)\|_{L^{2}(I)}\leq C(1+\|u\|_{L^{2}(I)})~~\forall t\in[0,T]~~\forall u\in L^{2}(I),
\\\|g(t,u_{1})-g (t,u_{2})\|_{L^{2}(I)}\leq C\|u_{1}-u_{2}\|_{L^{2}(I)}~~\forall u_{1},u_{2} \in L^{2}(I),
\end{array}
\end{array}
\end{equation*}
the conclusion in Theorem \ref{Th3} also holds.
\end{remark}
\begin{remark}
Theorem \ref{Th3} is established by the compactness method combines the Galerkin approximation
scheme with sharp compactness results in function spaces of
Sobolev type due to Simon and some celebrated probabilistic
compactness results of Prokhorov and Skorokhod.
\end{remark}
\par
Asymptotic behavior of the weak martingale solutions for the stochastic nonclassical diffusion equations as $\varepsilon\rightarrow0$ can be described by the following results.
\begin{theorem}\label{Th4}
Let assumption (A) be satisfied, $ T>0$ and $u_{0}\in H^{1}_{0}(I).$ If $\{(\Omega^{\varepsilon},\mathcal{F}^{\varepsilon},\mathbb{P}^{\varepsilon}),(\mathcal{F}^{\varepsilon}_{t})_{0\leq t\leq T},u^{\varepsilon},B^{\varepsilon}\}_{\varepsilon\in [0,1]}$ are the weak martingale solutions of problem (\ref{13}), there exists a subsequence $\{\varepsilon_{i}\}\subset[0,1]$ with $\varepsilon_{i}\rightarrow0$ as $i\rightarrow\infty $, a probability space $(\Omega,\mathcal{F},\mathbb{P})$ and random variables $(\tilde{u}^{\varepsilon_{i}},\tilde{B}^{\varepsilon_{i}}),$  $(u,B)$ on $(\Omega,\mathcal{F},\mathbb{P})$ with values in $  L^{2}(0,T;L^{2}(I))\times C([0,T];\mathbb{R}^{1})$ such that
\begin{eqnarray*}
\begin{array}{l}
\begin{array}{llll}
\mathcal{L}(\tilde{u}^{\varepsilon_{i}},\tilde{B}^{\varepsilon_{i}})=\mathcal{L}(u^{\varepsilon_{i}},B^{\varepsilon_{i}})
\end{array}
\end{array}
\end{eqnarray*}
and the following convergences hold for any $1\leq p< \infty:$
\begin{eqnarray*}
\begin{array}{l}
\begin{array}{llll}
\tilde{u}^{\varepsilon_{i}}\rightarrow u~~{\rm{strongly~~ in}}~~ L^{2}(\Omega,L^{2}(0,T;L^{2}(I))),
\\
\tilde{u}^{\varepsilon_{i}}\rightarrow u~~{\rm{weakly~~ in}}~~ L^{p}(\Omega,L^{2}(0,T;H^{1}(I))),
\\
\tilde{u}^{\varepsilon_{i}}\rightarrow u~~{\rm{weakly~~star~~ in}}~~ L^{p}(\Omega,L^{\infty}(0,T;L^{2}(I))),
\\
\tilde{B}^{\varepsilon_{i}}\rightarrow B~~{\rm{in}}~~C([0,T];\mathbb{R}^{1})~~\mathbb{P}-a.s.,

\end{array}
\end{array}
\end{eqnarray*}
as $i\rightarrow\infty $ and $\{(\Omega,\mathcal{F},\mathbb{P}),(\mathcal{F}_{t})_{0\leq t\leq T},u,B\}$ is a weak martingale solution of problem
\begin{equation}\label{61}
\begin{array}{l}
\left\{
\begin{array}{llll}
du+(-u_{xx}+u^{3}-u)dt=g(u)dB
\\u(0,t)=0=u(1,t)
\\u(0)=u_{0}

\end{array}
\right.
\end{array}
\begin{array}{lll}
{\rm{in}}~I\times(0,T)\\
{\rm{in}}~(0,T)\\
{\rm{in}}~I.
\end{array}
\end{equation}
\end{theorem}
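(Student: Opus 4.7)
The plan is to use the uniform estimates \eqref{33}--\eqref{35} together with a Prokhorov--Skorokhod compactness/representation scheme, and then pass to the limit in the weak formulation to identify $(u,B)$ as a solution of \eqref{61}. Set $\mathcal{X}:=L^{2}(0,T;L^{2}(I))\times C([0,T];\mathbb{R})$. For the first marginal, \eqref{34} bounds $u^{\varepsilon_i}$ uniformly in $L^{2}(\Omega;L^{2}(0,T;H^{1}(I)))$, while \eqref{35} supplies a uniform time-translation modulus in $L^{2}(0,T;H^{-1}(I))$. Combining these via a Simon/Aubin--Lions compactness criterion in $L^{2}(0,T;L^{2}(I))$ and Chebyshev's inequality yields tightness of $\{\mathcal{L}(u^{\varepsilon_i})\}$; the laws of $B^{\varepsilon_i}$ are all equal to Wiener measure and hence automatically tight. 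Prokhorov's theorem extracts a weakly convergent subsequence, and Skorokhod's theorem produces the probability space $(\Omega,\mathcal{F},\mathbb{P})$ and variables $(\tilde u^{\varepsilon_i},\tilde B^{\varepsilon_i})$ with the prescribed laws, converging $\mathbb{P}$-a.s. in $\mathcal{X}$ to some $(u,B)$.

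Because the laws of $\tilde u^{\varepsilon_i}$ coincide with those of $u^{\varepsilon_i}$, the uniform bounds \eqref{33}--\eqref{34} carry over. Banach--Alaoglu applied to the transferred bounds, combined with the a.s.\ strong convergence, yields the weak convergence in $L^{p}(\Omega;L^{2}(0,T;H^{1}(I)))$ and weak-$\star$ convergence in $L^{p}(\Omega;L^{\infty}(0,T;L^{2}(I)))$ claimed in the theorem, and $B$ is a Wiener process because each $\tilde B^{\varepsilon_i}$ is; the relevant filtration can be taken as the natural augmented filtration generated by $u$ and $B$.

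Passing to the limit in the weak form: the correction term $\varepsilon_i(\tilde u^{\varepsilon_i}_x(t),\varphi_x)$ vanishes because $\sqrt{\varepsilon_i}\,\|\tilde u^{\varepsilon_i}_x(t)\|_{L^{2}(I)}$ is bounded by \eqref{33}; the linear elliptic term passes by weak $L^{2}(0,T;H^{1}(I))$ convergence; and for the cubic term I would combine the a.s.\ strong convergence $\tilde u^{\varepsilon_i}\to u$ in $L^{2}(0,T;L^{2}(I))$ with the uniform $L^{4}(0,T;L^{4}(I))$ bound from \eqref{34} to obtain, via interpolation, strong convergence in $L^{q}(0,T;L^{q}(I))$ for every $q<4$, whence $(\tilde u^{\varepsilon_i})^{3}\to u^{3}$ in $L^{1}(0,T;L^{1}(I))$ in probability. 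Since in one space dimension $H^{1}_{0}(I)\hookrightarrow L^{\infty}(I)$, this suffices to identify the limit of $\int_{0}^{t}((\tilde u^{\varepsilon_i})^{3},\varphi)ds$. Finally $g(\tilde u^{\varepsilon_i})\to g(u)$ a.s.\ in $L^{2}(0,T;L^{2}(I))$ by assumption (A).

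The main obstacle is handling the stochastic integral, since one cannot generally pass to the limit inside an It\^o integral when both integrator and integrand vary. I plan to adopt the standard Bensoussan-style martingale identification. For a test function $\varphi\in H^{1}_{0}(I)$, set
$$M^{\varepsilon_i}_{\varphi}(t):=(\tilde u^{\varepsilon_i}(t),\varphi)+\varepsilon_i(\tilde u^{\varepsilon_i}_{x}(t),\varphi_{x})-(u_{0},\varphi)-\varepsilon_i(u_{0x},\varphi_{x})+\int_{0}^{t}\bigl((\tilde u^{\varepsilon_i}_{x},\varphi_{x})+((\tilde u^{\varepsilon_i})^{3}-\tilde u^{\varepsilon_i},\varphi)\bigr)ds.$$
Equality of laws implies $M^{\varepsilon_i}_{\varphi}(t)=\int_{0}^{t}(g(\tilde u^{\varepsilon_i}),\varphi)d\tilde B^{\varepsilon_i}$, so it is a continuous square-integrable martingale with quadratic variation $\int_{0}^{t}(g(\tilde u^{\varepsilon_i}),\varphi)^{2}ds$ and cross-variation $\int_{0}^{t}(g(\tilde u^{\varepsilon_i}),\varphi)ds$ with $\tilde B^{\varepsilon_i}$. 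Using the uniform moments \eqref{33}--\eqref{34} to upgrade a.s.\ convergence to convergence in $L^{2}(\Omega)$ via Vitali's theorem, these martingale and (co)variation identities pass to the limit and give a continuous martingale $M_\varphi$ with quadratic variation $\int_{0}^{t}(g(u),\varphi)^{2}ds$ and cross-variation $\int_{0}^{t}(g(u),\varphi)ds$ with $B$; a standard martingale representation then identifies $M_{\varphi}(t)=\int_{0}^{t}(g(u),\varphi)dB$. The continuity of $u$ in $L^{2}(I)$ follows from the resulting equation, completing the verification that $(u,B)$ is a weak martingale solution of \eqref{61}.
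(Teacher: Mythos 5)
Your proposal is correct and follows the same overall skeleton as the paper --- tightness of $\mathcal{L}(u^{\varepsilon},B^{\varepsilon})$ in $L^{2}(0,T;L^{2}(I))\times C([0,T];\mathbb{R})$ obtained from \eqref{33}--\eqref{35} via Simon's compactness criterion and Chebyshev, then Prokhorov and Skorokhod, transfer of the uniform bounds by equality of laws, Vitali to upgrade the a.s.\ convergence, and the observation that $\varepsilon_{i}(\tilde u^{\varepsilon_{i}}_{x},\varphi_{x})\rightarrow 0$ since $\varepsilon_{i}\|\tilde u^{\varepsilon_{i}}_{x}\|^{2}_{L^{2}(I)}$ is uniformly integrable. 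The genuine difference is in the identification of the noise term. The paper first shows, via a mollification of $g$ and the functionals $X_{n,\rho}/(1+X_{n,\rho})$ (the argument referenced back to \eqref{51}), that the Skorokhod representatives $(\tilde u^{\varepsilon_{i}},\tilde B^{\varepsilon_{i}})$ satisfy the weak formulation \eqref{60} on the new probability space, and then passes to the limit \emph{directly in the stochastic integrals}, invoking the convergence-in-probability lemma for It\^{o} integrals with jointly converging integrands and integrators (the results of Debussche--Glatt-Holtz--Temam and Gy\"{o}ngy--Krylov cited in Section 4). You instead use the Bensoussan-style martingale identification: show $M^{\varepsilon_{i}}_{\varphi}$ is a square-integrable martingale with the stated quadratic and cross variations, pass these identities to the limit, and conclude $M_{\varphi}(t)=\int_{0}^{t}(g(u),\varphi)dB$ from $\langle M_{\varphi}-\int_{0}^{\cdot}(g(u),\varphi)dB\rangle\equiv 0$. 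Both routes are standard and both require essentially the same uniform integrability input; your route has the advantage that the martingale property and the variation processes are determined by the law alone, so the transfer to the Skorokhod space is immediate, whereas the paper must run the mollification argument to get the pointwise identity \eqref{60} first. One small imprecision on your side: you justify $M^{\varepsilon_{i}}_{\varphi}(t)=\int_{0}^{t}(g(\tilde u^{\varepsilon_{i}}),\varphi)d\tilde B^{\varepsilon_{i}}$ by ``equality of laws,'' but a stochastic integral is not a pointwise functional of the paths, so this identity itself needs the mollification argument; for your method you do not actually need it --- it suffices that $M^{\varepsilon_{i}}_{\varphi}$ inherits the martingale property and its brackets from the law, so you should phrase that step accordingly.
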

\begin{remark}
If we replace $g(u)$ in (\ref{13}) by $g(t,u)$ and assume that $g(t,u)$ is nonlinear measurable mapping defined on $[0,T] \times L^{2}(I)$ taking values on $L^{2}(I),$ it is continuous with respect to $u$
and there exists  a constant $C$ such that
\begin{equation*}
\begin{array}{l}
\begin{array}{llll}
\|g(t,u)\|_{L^{2}(I)}\leq C(1+\|u\|_{L^{2}(I)})~~\forall t\in[0,T]~~\forall u\in L^{2}(I),
\\\|g(t,u_{1})-g (t,u_{2})\|_{L^{2}(I)}\leq C\|u_{1}-u_{2}\|_{L^{2}(I)}~~\forall u_{1},u_{2} \in L^{2}(I),
\end{array}
\end{array}
\end{equation*}
the conclusion in Theorem \ref{Th4} also holds.
\end{remark}
\subsection{Weak solution}
\par
Next, we consider another kind of solution to (\ref{13}).

\begin{definition}
A stochastic process $u$ is said to be a weak solution of (\ref{13})
if
\par
$u$ is $L^{2}(I)$-valued and $\mathcal{F}_{t}$-measurable for each $t\in [0,T],$
\par
$u\in L^{2}(\Omega;L^{2}(C([0,T];L^{2}(I))),$
\par
$u(0)=u_{0}$
\par
and
\begin{equation}
\begin{array}{l}
\begin{array}{llll}
(u(t),\varphi)-\varepsilon(u(t),\varphi_{xx})
\\~~~~~~~~~~~=(u_{0},\varphi)-\varepsilon(u_{0},\varphi_{xx})+\int_{0}^{t}( u(s),\varphi_{xx})ds-\int_{0}^{t}(u^{3}-u,\varphi)ds+\int_{0}^{t}(g(s),\varphi)dB(s)
\end{array}
\end{array}
\end{equation}
holds for all $t\in [0,T]$ and all $\varphi\in H^{2}(I)\cap  H^{1}_{0}(I)$, for almost all $\omega\in \Omega.$
\end{definition}
\begin{remark} The weak solution of SPDEs has been discussed in \cite{D2}.
\end{remark}
\begin{theorem}\label{Th1}
Let assumption (B) be satisfied, $T>0$ and $u_{0}\in H^{2}(I)\cap H^{1}_{0}(I).$ For any $\varepsilon\in [0,\frac{1}{2}], $ there exists a unique weak solution $u^{\varepsilon}(t)$ to (\ref{13}) in $ L^{2}(\Omega;C([0,T];H^{2}(I)\cap H^{1}_{0}(I)))$ and
for any $1\leq p< \infty,$ there exists a constant $C(p,L,T,I,u_{0})$ such that
\begin{equation}\label{15}
\begin{array}{l}
\begin{array}{llll}
\mathbb{E} \sup\limits_{0\leq t\leq T}\|u^{\varepsilon}(t)\|_{L^{2}(I)}^{2p}+\mathbb{E}(\int_{0}^{T}\| u^{\varepsilon}_{x}\|_{L^{2}(I)}^{2}dt)^{p}+\mathbb{E}(\int_{0}^{T}\int_{I}u^{\varepsilon4}dxdt)^{p}+\mathbb{E}(\int_{0}^{T}\varepsilon\| u^{\varepsilon}_{xx}\|_{L^{2}(I)}^{2}dt)^{p}
\\\leq C(p,L,T,I,u_{0}).
\end{array}
\end{array}
\end{equation}
Moreover, there exists a constant $C(L,T,I,u_{0})$ such that
\begin{equation}\label{16}
\begin{array}{l}
\begin{array}{llll}
\mathbb{E} \sup\limits_{0\leq t\leq T}(\|u^{\varepsilon}_{x}(t)\|_{L^{2}(I)}^{2}+\varepsilon\|u^{\varepsilon}_{xx}(t)\|_{L^{2}(I)}^{2})+\mathbb{E}\int_{0}^{T}\| u^{\varepsilon}_{xx}\|_{L^{2}(I)}^{2}dt\leq C(L,T,I,u_{0}).

\end{array}
\end{array}
\end{equation}

\end{theorem}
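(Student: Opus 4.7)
The plan is a Galerkin approximation on the given stochastic basis followed by pathwise uniqueness. Let $\{e_k\}_{k\geq 1}$ be the $L^2(I)$-orthonormal eigenbasis of $-\partial_{xx}$ with Dirichlet conditions and $V_n = \mathrm{span}\{e_1,\dots,e_n\}$, and project (\ref{13}) onto $V_n$. Since this basis simultaneously diagonalizes $I$ and $-\partial_{xx}$, the projected equation is an $n$-dimensional SDE with locally Lipschitz coefficients, and global solvability of $u^{\varepsilon,n}(t)\in V_n$ follows once the a priori estimates below rule out blow-up.

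The core of the proof is a pair of Itô-based energy estimates that are genuinely uniform in both $n$ and $\varepsilon$. First, applying Itô's formula to $\Phi^p$ with $\Phi = \|u^{\varepsilon,n}\|_{L^2}^2 + \varepsilon\|u_x^{\varepsilon,n}\|_{L^2}^2$ and using the drift identity $(F,u) = -\|u_x\|_{L^2}^2 - \|u\|_{L^4}^4 + \|u\|_{L^2}^2$ yields, after the sublinear bound on $g$ from assumption (B), Gronwall, and the Burkholder-Davis-Gundy inequality, the first three bounds of (\ref{15}). Next, test the equation against $-\partial_{xx}u^{\varepsilon,n}$ (legitimate in the eigenbasis) and exploit the crucial sign identity
\begin{equation*}
-(u^{3},u_{xx}) \;=\; 3\int_{I} u^{2}\,u_{x}^{2}\,dx \;\geq\; 0,
\end{equation*}
which allows the cubic term to be discarded. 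Itô's formula for $\Psi^p$ with $\Psi = \|u_x^{\varepsilon,n}\|_{L^2}^2 + \varepsilon\|u_{xx}^{\varepsilon,n}\|_{L^2}^2$, combined with the $H^1$-Lipschitz bound on $g$, then produces (\ref{16}) together with the remaining $\varepsilon$-weighted term of (\ref{15}); here the assumption $u_0 \in H^2(I)\cap H^1_0(I)$ is used precisely to bound $\Psi(0)$.

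To pass to the limit $n\to\infty$ with $\varepsilon$ fixed, the uniform bounds give weak and weak-$*$ compactness, and writing the equation in integral form exhibits time regularity of $u^{\varepsilon,n}$ in a weaker space; an Aubin-Lions argument then delivers subsequential strong convergence in $L^2(0,T;H^1(I))$. Because in one dimension $H^1(I)\hookrightarrow C(\overline{I})$ continuously, the cubic limit is identified in $L^2(\Omega\times(0,T);L^2(I))$ and the weak formulation passes to the limit.

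Uniqueness is proved pathwise: if $u_1,u_2$ are two weak solutions starting from $u_0$, the difference $v=u_1-u_2$ satisfies
\begin{equation*}
d(v-\varepsilon v_{xx})+(-v_{xx}+u_1^{3}-u_2^{3}-v)\,dt=(g(u_1)-g(u_2))\,dB.
\end{equation*}
Applying Itô to $\|v\|_{L^2}^2+\varepsilon\|v_x\|_{L^2}^2$ generates the nonnegative cubic contribution $\int_I v^{2}(u_1^2+u_1u_2+u_2^2)\,dx$, which can be dropped, after which the Lipschitz bound on $g$ yields a Gronwall-amenable inequality forcing $v\equiv 0$. The main obstacles I anticipate are (i) securing the $\varepsilon$-independent $H^2$ bound in (\ref{16}), which rests entirely on the favorable sign of $-(u^{3},u_{xx})$, and (ii) the limit of the cubic nonlinearity, resolved by the one-dimensional Sobolev embedding that makes $u^3$ subcritical.
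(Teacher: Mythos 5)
Your energy estimates are sound and in fact form a legitimate alternative to the paper's route: the paper does not discard the cubic term by the sign of $-(u^{3},u_{xx})=3\int_{I}u^{2}u_{x}^{2}\,dx$ in the $H^{2}$ step, but instead bounds $\|u^{3}\|_{L^{2}(I)}^{2}=\|u\|_{L^{6}(I)}^{6}\leq C\|u\|_{H^{1}(I)}^{2}\|u\|_{L^{2}(I)}^{4}$ by Gagliardo--Nirenberg, treats $u^{3}-u$ as a known $L^{2}$ forcing, and invokes the regularity estimate for the \emph{linear} nonclassical equation (Proposition \ref{Pro1}(3)); your sign identity is cleaner and avoids the higher moments of (\ref{15}) that the paper needs at that point. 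The existence mechanism, however, is genuinely different: the paper never runs a compactness argument for this theorem. It truncates the cubic nonlinearity with a cut-off $f_{R}(y)=\rho_{R}(y)y^{3}$, solves the truncated problem by \emph{Picard iteration} built on the linear theory (so the approximating sequence is Cauchy in $L^{2}(\Omega;C([0,T];H^{2}(I)))$ and converges strongly, with no subsequence extraction), glues the local solutions along the stopping times $\tau_{R}$, and then uses the a priori bounds plus Chebyshev to show $\tau^{*}=+\infty$ a.s.

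The genuine gap in your proposal is the passage to the limit $n\to\infty$. Pathwise Aubin--Lions compactness gives, for each fixed $\omega$, a strongly convergent subsequence of $\{u^{\varepsilon,n}(\omega)\}$ in $L^{2}(0,T;H^{1}(I))$ --- but the subsequence depends on $\omega$. This does not produce a single $(\mathcal{F}_{t})$-adapted limit process on the prescribed stochastic basis, and it does not justify passing to the limit in the stochastic integral $\int_{0}^{t}(g(u^{\varepsilon,n}),\varphi)\,dB$, which requires convergence in probability of the integrands along a deterministic subsequence. This is precisely the obstruction that forces the paper to use Prokhorov/Skorokhod/Gy\"{o}ngy--Krylov machinery in Sections 3--4 for the martingale solutions, and to switch to the truncation-plus-Picard scheme here where a probabilistically strong solution is required. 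To repair your argument you must either (i) prove the Galerkin sequence is Cauchy in $n$ --- which, because $u\mapsto u^{3}$ is only locally Lipschitz and $\|u^{\varepsilon,n}\|_{L^{\infty}}$ is controlled only in expectation, requires localization by stopping times, effectively reproducing the paper's truncation; or (ii) establish tightness of the laws of $(u^{\varepsilon,n},B)$, apply Skorokhod to get a martingale solution, and then combine your pathwise uniqueness with the Gy\"{o}ngy--Krylov lemma (Lemma \ref{L1}) to upgrade to convergence in probability of the original sequence. As written, the step ``an Aubin--Lions argument then delivers subsequential strong convergence'' does not close the proof.
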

\begin{remark}
Since nonlinear terms $u^{3}-u$ are not Lipschitz continuous,
we will use a truncation argument which will lead
to a local existence result. Then via some a priori estimates we
obtain that the solution is also global.
\end{remark}
\par
Asymptotic behavior of the weak solutions for the stochastic nonclassical diffusion equations as $\varepsilon\rightarrow0$ can be described by the following results.
\begin{theorem}\label{Th2}
Let assumption (B) be satisfied, $T>0$ and $u_{0}\in H^{2}(I)\cap H^{1}_{0}(I).$ For any $\varepsilon\in [0,\frac{1}{2}],$ if $u^{\varepsilon}$ is the weak solution to (\ref{13}) and $z$ is the weak solution to
\begin{equation}\label{21}
\begin{array}{l}
\left\{
\begin{array}{llll}
dz+(-z_{xx}+z^{3}-z)dt=g(z)dB
\\z(0,t)=0=z(1,t)
\\z(0)=u_{0}

\end{array}
\right.
\end{array}
\begin{array}{lll}
{\rm{in}}~I\times(0,T)\\
{\rm{in}}~(0,T)\\
{\rm{in}}~I,
\end{array}
\end{equation}
then $u^{\varepsilon}$ converges in probability to $z$ in $L^{2}(0,T;H^{1}(I))$ as $\varepsilon\rightarrow0,$
namely, for any $\delta>0,$ we have
\begin{equation}
\begin{array}{l}
\begin{array}{llll}
\lim\limits_{\varepsilon\rightarrow0}\mathbb{P}( \|u^{\varepsilon}-z\|_{L^{2}(0,T;H^{1}(I))}>\delta)=0.

\end{array}
\end{array}
\end{equation}

\end{theorem}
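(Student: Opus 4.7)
The plan is to prove an $L^2(\Omega)$-rate estimate of order $\varepsilon$ for $\|u^\varepsilon - z\|_{L^2(0,T\wedge \tau_R^\varepsilon;H^1(I))}$ on a random interval localised by a stopping time, and then to deduce convergence in probability via a Chebyshev/union-bound argument using the uniform estimates of Theorem~\ref{Th1}. Set $\phi := u^\varepsilon - z$. Although $u^\varepsilon$ itself is not a standard $L^2(I)$-semimartingale (the $\varepsilon u^\varepsilon_{xx}$ sits inside the stochastic differential in (\ref{13})), the shifted variable $H := \phi - \varepsilon u^\varepsilon_{xx} = (u^\varepsilon - \varepsilon u^\varepsilon_{xx}) - z$ is, and subtracting the equations for $u^\varepsilon$ and $z$ yields $dH = (\phi_{xx} - ((u^\varepsilon)^3 - z^3) + \phi)\,dt + (g(u^\varepsilon) - g(z))\,dB$, with initial value $H(0) = -\varepsilon u_{0,xx}$ of $L^2$-norm of order $\varepsilon$.

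The first step is to apply the It\^o formula in $L^2(I)$ to $\|H(t)\|^2_{L^2}$. Integration by parts in $(\phi, \phi_{xx}) = -\|\phi_x\|^2$ produces the crucial parabolic dissipation $-2\int_0^t \|\phi_x\|^2\,ds$; the monotonicity identity $((u^\varepsilon)^3 - z^3)(u^\varepsilon - z) = (u^\varepsilon - z)^2((u^\varepsilon)^2 + u^\varepsilon z + z^2) \geq 0$ makes the cubic nonlinearity drop out with the correct sign; and the It\^o correction $\|g(u^\varepsilon) - g(z)\|^2_{L^2}$ is controlled by the Lipschitz part of assumption (B) and absorbed into the dissipation (modulo the usual trick of first multiplying the identity by $e^{-\lambda t}$ if needed). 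The remaining contributions all carry an explicit factor $\varepsilon$ and pair $u^\varepsilon_{xx}$ with one of $z_{xx}$, $\phi_x$, or $(u^\varepsilon)^3 - z^3$; the first two are handled by Young's inequality together with the $L^2(\Omega;L^2(0,T;H^2))$ and $L^2(\Omega;L^\infty(0,T;H^1))$ bounds of Theorem~\ref{Th1} (applied to $u^\varepsilon$ and, at $\varepsilon = 0$, to $z$).

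The main obstacle is the cubic cross term $2\varepsilon(u^\varepsilon_{xx}, (u^\varepsilon)^3 - z^3)$. The one-dimensional embedding $H^1(I) \hookrightarrow L^\infty(I)$ gives $\|(u^\varepsilon)^3 - z^3\|_{L^2} \leq C(\|u^\varepsilon\|^2_{H^1} + \|z\|^2_{H^1})\|\phi\|_{L^2}$, and Young's inequality splits this term into a piece absorbed by the $\varepsilon\int_0^t\|u^\varepsilon_{xx}\|^2\,ds$ dissipation plus a remainder $C\varepsilon(\|u^\varepsilon\|^2_{H^1} + \|z\|^2_{H^1})^2\|\phi\|^2$. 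Because the random coefficient $(\|u^\varepsilon\|^2_{H^1} + \|z\|^2_{H^1})^2$ is only pathwise bounded and not bounded in $L^\infty(\Omega)$, a direct Gronwall argument after taking expectation fails. I therefore localise via the stopping time $\tau_R^\varepsilon := \inf\{t \in [0,T] : \|u^\varepsilon(t)\|^2_{H^1} + \|z(t)\|^2_{H^1} \geq R\} \wedge T$, on which this coefficient is bounded by $R^2$. Evaluating the It\^o identity at $t \wedge \tau_R^\varepsilon$, taking expectation (so the stopped stochastic integral is a genuine martingale with mean zero), and applying Gronwall then yields $\mathbb{E}\|H(t \wedge \tau_R^\varepsilon)\|^2_{L^2} + \mathbb{E}\int_0^{t \wedge \tau_R^\varepsilon}\|\phi_x\|^2_{L^2}\,ds \leq C(R)\varepsilon$ for every $t \in [0,T]$.

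Finally, the reverse inequality $\|\phi\|^2_{L^2} \leq 2\|H\|^2_{L^2} + 2\varepsilon^2\|u^\varepsilon_{xx}\|^2_{L^2}$, combined with the estimate $\mathbb{E}\int_0^T \varepsilon^2\|u^\varepsilon_{xx}\|^2_{L^2}\,ds \leq C\varepsilon$ that follows from (\ref{16}) and $\varepsilon \leq 1$, upgrades the previous bound to $\mathbb{E}\int_0^{T \wedge \tau_R^\varepsilon}\|\phi\|^2_{H^1(I)}\,ds \leq C(R)\varepsilon$. For any $\delta > 0$, Chebyshev then gives $\mathbb{P}(\|u^\varepsilon - z\|_{L^2(0,T;H^1)} > \delta) \leq \mathbb{P}(\tau_R^\varepsilon < T) + C(R)\varepsilon/\delta^2$, and $\mathbb{P}(\tau_R^\varepsilon < T) \leq C/R$ uniformly in $\varepsilon$ by Chebyshev applied to the uniform bound $\mathbb{E}\sup_{[0,T]}(\|u^\varepsilon\|^2_{H^1} + \|z\|^2_{H^1}) \leq C$ from Theorem~\ref{Th1}. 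Choosing $R$ large first and then $\varepsilon$ small completes the proof.
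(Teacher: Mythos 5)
Your route is genuinely different from the paper's: the paper never compares $u^{\varepsilon}$ with $z$ directly, but instead proves tightness of $\mathcal{L}(u^{\varepsilon})$ in $L^{2}(0,T;H^{1}(I))$ via the increment estimate (\ref{19}) and Corollary \ref{c1}, passes to Skorokhod representations of pairs of subsequences, identifies both limits with the unique solution of (\ref{21}) driven by the common limiting noise, and concludes by the Gy\"{o}ngy--Krylov lemma. Your direct energy estimate would be stronger (it gives a rate in $\varepsilon$), and most of it is sound: the choice of $H=(u^{\varepsilon}-\varepsilon u^{\varepsilon}_{xx})-z$ as the semimartingale, the sign of the cubic term, the extraction of the dissipation $-2\varepsilon\|u^{\varepsilon}_{xx}\|^{2}_{L^{2}}$ to absorb the cross term $2\varepsilon(u^{\varepsilon}_{xx},(u^{\varepsilon})^{3}-z^{3})$, the localization by $\tau^{\varepsilon}_{R}$, and the final Chebyshev/union bound are all correct and consistent with the bounds (\ref{15})--(\ref{16}).

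However, there is a genuine gap in your treatment of the It\^{o} correction. Assumption (B) provides a Lipschitz bound for $g$ only in $H^{1}(I)$, not in $L^{2}(I)$ (the $L^{2}$-Lipschitz condition belongs to assumption (A), which is not assumed in Theorem \ref{Th2}). Hence the best you can write is
\begin{equation*}
\|g(u^{\varepsilon})-g(z)\|^{2}_{L^{2}(I)}\leq \|g(u^{\varepsilon})-g(z)\|^{2}_{H^{1}(I)}\leq L^{2}\bigl(\|\phi\|^{2}_{L^{2}(I)}+\|\phi_{x}\|^{2}_{L^{2}(I)}\bigr),
\end{equation*}
and the contribution $L^{2}\|\phi_{x}\|^{2}_{L^{2}(I)}$ sits at exactly the same order as the parabolic dissipation $-2\|\phi_{x}\|^{2}_{L^{2}(I)}$; it can be absorbed only if $L^{2}<2$, which is not assumed. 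The exponential weight $e^{-\lambda t}$ does not help here: it removes terms proportional to the quantity being estimated, $\|H\|^{2}_{L^{2}(I)}$, but has no effect on a gradient term that is not dominated by $\|H\|^{2}_{L^{2}(I)}$. So the Gronwall loop does not close as written. To repair it you would need either to add the $L^{2}$-Lipschitz hypothesis on $g$ (as in assumption (A)), or to run the energy estimate one derivative higher (It\^{o} for $\|H_{x}\|^{2}_{L^{2}(I)}$, where the dissipation $\|\phi_{xx}\|^{2}_{L^{2}(I)}$ genuinely dominates $\|g(u^{\varepsilon})-g(z)\|^{2}_{H^{1}(I)}\leq L^{2}\|\phi\|^{2}_{H^{1}(I)}$ after interpolation), at the cost of new $\varepsilon$-terms involving third derivatives of $u^{\varepsilon}$ that the regularity theory of Theorem \ref{Th1} does not control. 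This is precisely the difficulty that the paper's softer compactness argument is designed to avoid.
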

\subsection{Main difficulties}
\par
The main difficulties in this paper are the following respects:
\par
\begin{itemize}
  \item  Multiplicative type noise. The noise in equation (\ref{13}) is not additive type, (\ref{13}) is perturbed by a stochastic term of multiplicative type, thus the method in \cite{W1,Z1,Z2} can not be used in dealing with (\ref{13}), we should take new measure. Here the
presence of a diffusion coefficient $g$ in front of the stochastic perturbation which is nonconstant
makes the proof of Theorem \ref{Th4} and Theorem \ref{Th2} definitely
more delicate and requires some extra work which is not necessary in the case of a Gaussian
perturbation.
  \item ``BBM" term. Equation (\ref{13}) contains the ``BBM" term $-u_{xxt}$, its stochastic from is $-d u_{xx}$, this brings us new difficulty in establishing the existence and regularity theory for the stochastic nonclassical diffusion equations. In the present work we will
try to overcome this difficulty by developing the Galerkin approximation techniques in \cite{K1,G2,G3,G4}. The ``BBM" term is different from the usual reaction-diffusion equation
essentially. For example, the nonclassical diffusion equation does not have smoothing effect, e.g., if the
initial data only belongs to a weaker topology space, the solution can not belong to a stronger topology
space with higher regularity. Moreover, since the existence of this term, we can't use the It\"{o} formula to $u^{2}.$
We borrow an essential idea from \cite{K1,G2,G3,G4}, but substantial technical adaptation is necessary for the problem in this paper.
\item Uniform estimates independent of the parameter $\varepsilon$. Since the parameter $\varepsilon$ in singular perturbation problem (1.1) is small, the uniform estimates for the solution of (\ref{13}) which are independent of the parameter $\varepsilon$ are very hard to obtain. The proof of the convergence result requires
uniform estimates on the Sobolev regularity in space and in time for the solutions to the stochastic nonclassical diffusion equation. As known, such
uniform bounds are used to establish tightness property of $u^{\varepsilon}$ in an appropriate functional space.

\item The cubic non-linear term. The last difficulty arises from polynomial nonlinearity in equation (\ref{13}), the nonlinear term in (\ref{13}) is cubic term $u^{3}-u$, the main obstacle is that it is difficult to obtain a higher regularity estimate to
guarantee the continuous convergence of the solutions as $\varepsilon\rightarrow0$. This type of nonlinearity can be handled by the truncation method. In order to overcome the problem, we use the cut-off technique and the Gagliardo-Nirenberg inequality.

\end{itemize}

\par
This paper is organized as follows. In Section 2, we give some preliminaries and gather all the necessary tools. The existence of weak martingale solutions
for (\ref{13}) is discussed in Section 3, we introduce a Galerkin
approximation scheme for the problem (\ref{13}) and obtain a priori estimates for the approximating
solutions, then we prove the crucial result of tightness of Galerkin¡¯s solutions and apply
Prokhorov¡¯s and Skorokhod¡¯s compactness results to prove Theorem \ref{Th3}. Section 4 is concerned with the continuity of weak martingale solutions
for (\ref{13}) as $\varepsilon\rightarrow0$. We derive the results of the tightness of the corresponding probability measures
and perform the passage to the limit which establishes the convergence of weak martingale solutions. In Section 5, applying the Picard iteration method to the
corresponding truncated equation, we give the local existence of weak solutions to (\ref{13}). Then, the energy estimate shows that the weak solution is also global in time. Moreover, we obtain the uniform estimates for the solution of (\ref{13}) which are independent of the parameter $\varepsilon$.
Section 6 is concerned with the continuity of weak solutions for (\ref{13}) as $\varepsilon\rightarrow0$. We derive tightness property of weak solutions in $L^{2}(0,T;H^{1}(I))$ and perform the passage to the limit which establishes the convergence of weak solutions.

\section{Preliminary}
This section is devoted to some preliminaries for the proof of Theorem \ref{Th3}--Theorem \ref{Th2}.
\subsection{Some tools}
\par
The following compactness results is important for tightness property of Galerkin solutions.
\begin{lemma}(See \cite[Theorem 5]{S1})\label{L7}
Let $X,B$ and $Y$ be some Banach spaces such that $X$ is compactly embedded into $B$ and let $B$ be a subset of $Y.$ For
any $1\leq p,q \leq\infty,$ let $V$ be a set bounded in $L^{q}(0,T;X)$ such that
$$\lim_{\theta\rightarrow 0}\int_{0}^{T-\theta}\|v(t+\theta)-v(t)\|_{Y}^{p}dt=0,$$
uniformly for all $v\in V.$ Then $V$ is relatively compact in $L^{p}(0,T;B).$
\end{lemma}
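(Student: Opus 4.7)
The plan is to prove this classical compactness theorem (due to Simon) using two main ingredients: Ehrling's interpolation lemma, which converts the compact embedding $X\hookrightarrow B$ into a quantitative inequality, and the Riesz--Fr\'echet--Kolmogorov criterion for relative compactness in $L^p$ of Banach-space valued functions. The first step is Ehrling's lemma: for every $\eta>0$ there exists $C(\eta)>0$ such that
\[\|w\|_B \le \eta\|w\|_X + C(\eta)\|w\|_Y\qquad\text{for all }w\in X,\]
which follows from a standard contradiction argument using compactness of $X\hookrightarrow B$ together with the assumption $B\subset Y$.

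The second step is to apply Ehrling pointwise in time to the increment $v(t+\theta)-v(t)$, raise to the $p$-th power and integrate, producing
\[\int_0^{T-\theta}\!\|v(t+\theta)-v(t)\|_B^p\,dt \le \eta^p\,C_1 + C(\eta)^p \int_0^{T-\theta}\!\|v(t+\theta)-v(t)\|_Y^p\,dt,\]
where $C_1$ depends only on the uniform $L^q(0,T;X)$-bound on $V$ (a H\"older step handles the case $q\ne p$). The second term vanishes as $\theta\to 0$ by hypothesis, uniformly in $v\in V$; letting then $\eta\to 0$ yields the crucial $L^p(0,T;B)$-time equi-continuity of $V$.

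To finish I also need pointwise-in-$t$ relative compactness of $\{v(t):v\in V\}$ in $B$. For this I would introduce the time-averaged sequence $v_h(t)=\tfrac1h\int_t^{t+h}v(s)\,ds$; the $L^q(0,T;X)$-bound makes $v_h(t)$ bounded in $X$ for each fixed $h$ and $t$, hence relatively compact in $B$ by the compact embedding, while $v_h\to v$ in $L^p(0,T;Y)$ uniformly in $v\in V$ by the time-modulus hypothesis. An Arzel\`a--Ascoli argument applied to $\{v_h\}$ in $C([0,T-h];B)$, a diagonal extraction as $h\to 0$, and the equi-continuity of the previous step together deliver the claimed relative compactness of $V$ in $L^p(0,T;B)$.

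The main obstacle is the asymmetry between the exponents: the $X$-bound is measured in $L^q$ in time while the target and the equi-continuity live in $L^p$, so H\"older factors must be tracked carefully, and the endpoint cases $p,q\in\{1,\infty\}$ require separate treatment (in particular $q=\infty$, where the $X$-bound is pointwise and simplifies several steps, and $p=\infty$, where compactness in $L^\infty(0,T;B)$ is subtler and must be argued through a continuous-representative version of $v$). Verifying that the time-averaging errors and the Ehrling truncation can be made simultaneously small, uniformly over $v\in V$, is the technical heart of the proof.
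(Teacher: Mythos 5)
The paper does not actually prove this lemma: it is quoted verbatim (as a standard paraphrase) from Simon \cite[Theorem 5]{S1}, so there is no in-paper argument to compare with. Your strategy --- Ehrling's interpolation inequality $\|w\|_{B}\le\eta\|w\|_{X}+C(\eta)\|w\|_{Y}$ combined with the translation criterion for relative compactness in $L^{p}(0,T;B)$ --- is precisely Simon's own route, and the two main steps (equi-integrability of the increments in $B$, plus pointwise relative compactness of the time averages) are the right decomposition. Two small remarks: Ehrling's lemma needs the inclusion $B\hookrightarrow Y$ to be continuous and injective, not merely $B\subset Y$ as sets; and for the pointwise compactness it is cleaner to check Simon's condition that $\{\int_{t_{1}}^{t_{2}}v\,dt:\ v\in V\}$ is relatively compact in $B$, which is immediate from the $L^{q}(0,T;X)$ bound with $q\ge1$, rather than to re-run Arzel\`a--Ascoli on the averages $v_{h}$ (where, incidentally, equicontinuity of $t\mapsto v_{h}(t)$ in $B$ uniformly over $V$ uses $\|v_{h}(t)-v_{h}(t')\|_{X}\le\frac{2}{h}|t-t'|^{1-1/q}\|v\|_{L^{q}(0,T;X)}$ and therefore degenerates at $q=1$; and the final approximation must be $v_{h}\to v$ in $L^{p}(0,T;B)$, not only in $L^{p}(0,T;Y)$, which is one more Ehrling step).

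The one genuine gap is the parenthetical claim that ``a H\"older step handles the case $q\ne p$.'' H\"older on the finite interval $(0,T)$ gives $L^{q}\subset L^{p}$ only when $q\ge p$. When $q<p$, boundedness of $V$ in $L^{q}(0,T;X)$ gives no control on $\int_{0}^{T-\theta}\|v(t+\theta)-v(t)\|_{X}^{p}\,dt$, so the term $\eta^{p}C_{1}$ in your key inequality is not finite and the Ehrling step for the increments collapses; the same obstruction reappears when you need $\|v_{h}-v\|_{L^{p}(0,T;B)}$ small. This is not a removable technicality: the conclusion in $L^{p}(0,T;B)$ is only obtained for exponents $p$ at which $V$ is bounded in $L^{p}(0,T;X)$, i.e.\ $p\le q$ (Simon's Theorem 5 is formulated accordingly). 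You should either restrict to $p\le q$ --- which covers the only case the paper uses, $p=q=2$ with $X=H^{1}(I)$, $B=L^{2}(I)$, $Y=H^{-1}(I)$ --- or supply a separate argument for $q<p$, which your sketch does not contain.
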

\par
According to Lemma \ref{L7}, we can obtain the following compactness result.
\begin{corollary}\label{c1}
Let $X,B$ and $Y$ satisfy the same assumptions in Lemma \ref{L7} and $\mu_{m},\nu_{m}$ be two sequences which converge to zero as $m\rightarrow\infty.$
Then \begin{eqnarray*}
\begin{array}{l}
\mathcal{Z}=
\left\{q\in\left|
\begin{array}{llll}
L^{2}(0,T; X)\cap L^{\infty}(0,T;B)
\\ \displaystyle\sup\limits_{m}\frac{1}{\nu_{m}}\sup\limits_{|\theta|\leq \mu_{m}}\left(\int_{0}^{T}\|q(t+\theta)-q(t)\|_{Y}^{2}dt\right)^{\frac{1}{2}}<+\infty
\end{array}
\right.\right\}
\end{array}
\end{eqnarray*}
in $L^{2}(0,T;B)$ is compact.
\end{corollary}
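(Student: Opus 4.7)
The plan is to deduce this from Simon's compactness result (Lemma \ref{L7}) applied with $p=q=2$, taking as $V$ any bounded subset of $\mathcal{Z}$ in its natural norm. That is, I would fix a constant $K$ that simultaneously bounds $\|q\|_{L^{2}(0,T;X)}$, $\|q\|_{L^{\infty}(0,T;B)}$, and the supremum appearing in the third defining condition for every $q\in V$. The hypotheses on $X\hookrightarrow B\subset Y$ are inherited from Lemma \ref{L7}, and uniform boundedness of $V$ in $L^{2}(0,T;X)$ is immediate from the definition of $\mathcal{Z}$. Hence the only non-trivial point is to verify the uniform time-modulus hypothesis
$$\lim_{\theta\to 0}\sup_{q\in V}\int_{0}^{T-\theta}\|q(t+\theta)-q(t)\|_{Y}^{2}\,dt=0.$$

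The key step is to translate the quantitative bound provided by the two vanishing sequences $\mu_{m},\nu_{m}$ into this qualitative equicontinuity statement. Given $\eta>0$, I would pick $m_{0}$ so large that $K^{2}\nu_{m_{0}}^{2}<\eta$. Then, for every $|\theta|\le\mu_{m_{0}}$ and every $q\in V$, the third defining condition of $\mathcal{Z}$ gives
$$\int_{0}^{T-\theta}\|q(t+\theta)-q(t)\|_{Y}^{2}\,dt\;\le\;\int_{0}^{T}\|q(t+\theta)-q(t)\|_{Y}^{2}\,dt\;\le\;K^{2}\nu_{m_{0}}^{2}\;<\;\eta.$$
Since $\mu_{m}\to 0$, this yields the required equicontinuity uniformly over $V$, and Lemma \ref{L7} then delivers relative compactness of $V$ in $L^{2}(0,T;B)$.

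There is no substantial obstacle; the corollary is essentially a repackaging of Simon's lemma in which the usual ``$\theta\to 0$'' equicontinuity is replaced by the quantitative control $\nu_{m}^{-1}\sup_{|\theta|\le\mu_{m}}(\cdots)^{1/2}\le K$ along the prescribed sequences. The only mildly delicate point is that the modulus condition in $\mathcal{Z}$ uses $\int_{0}^{T}$ whereas Lemma \ref{L7} uses $\int_{0}^{T-\theta}$, but this is harmless since the latter is bounded above by the former (after any measurable extension of $q$ beyond $[0,T]$, or by interpreting the integrand as zero where $q(t+\theta)$ is not defined). Thus the main work is merely the clean choice of $m_{0}$ making the uniformity in $q\in V$ apparent.
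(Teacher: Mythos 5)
Your proposal is correct and coincides with the paper's (implicit) argument: the paper offers no proof beyond the remark that the corollary follows from Lemma \ref{L7}, and your deduction --- applying Simon's result with $p=q=2$ and converting the quantitative bound $\nu_{m}^{-1}\sup_{|\theta|\le\mu_{m}}(\cdots)^{1/2}\le K$ into the uniform equicontinuity hypothesis by choosing $m_{0}$ with $K^{2}\nu_{m_{0}}^{2}<\eta$ --- is exactly the intended one. You also correctly read the statement as asserting (relative) compactness of norm-bounded subsets of $\mathcal{Z}$, which is how it is used later for the balls $Y^{1}_{\rho}$; as literally written the set $\mathcal{Z}$ is unbounded and could not itself be compact.
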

\begin{remark}
The above compactness result plays a crucial role in the proof of the tightness of the probability measures generated
by the sequence $\{u^{\varepsilon}\}_{\varepsilon>0}.$
\end{remark}

\par
Now we introduce several spaces which will be used in the next section. Let $\mu_{m},\nu_{m}$ be two sequences that defined in Corollary \ref{c1}.
\par
$\bullet$ The space $Y_{\mu_{m},\nu_{m}}^{1}$ is a Banach space with the norm
\begin{eqnarray*}
\begin{array}{l}
\begin{array}{llll}
\displaystyle\|y\|_{Y_{\mu_{m},\nu_{m}}^{1}}=\sup\limits_{0\leq t\leq T}\|y(t)\|_{L^{2}(I)}+\left(\int_{0}^{T}\|y(t)\|_{H^{1}(I)}^{2}dt\right)^{\frac{1}{2}}
\\~~~~~~~~~~~~~~~~\displaystyle+\sup\limits_{m}\frac{1}{\nu_{m}}\sup\limits_{|\theta|\leq \mu_{m}}\int_{0}^{T-\theta}\|y(t+\theta)-y(t)\|_{H^{-1}(I)}^{2}dt.
\end{array}
\end{array}
\end{eqnarray*}
\par
$X_{p,\mu_{m},\nu_{m}}^{1}$ is a space consist of all random variables $y$ on $(\Omega,\mathcal{F},\mathbb{P})$ which satisfy
\begin{eqnarray*}
\begin{array}{l}
\begin{array}{llll}
\displaystyle \mathbb{E}\sup\limits_{0\leq t\leq T}\|y(t)\|_{L^{2}(I)}^{2p}<\infty,~~~\mathbb{E}\left(\int_{0}^{T}\|y(t)\|_{H^{1}(I)}^{2}dt\right)^{\frac{p}{2}}<\infty,
\\ \displaystyle \mathbb{E}\sup\limits_{m}\frac{1}{\nu_{m}}\left(\sup\limits_{|\theta|\leq \mu_{m}}\int_{0}^{T-\theta}\|y(t+\theta)-y(t)\|_{H^{-1}(I)}^{2}dt\right)^{\frac{1}{2}}<\infty,

\end{array}
\end{array}
\end{eqnarray*}
where $\mathbb{E}$ denotes the mathematical expectation with respect to the probability measure $\mathbb{P}$.
Endowed with the norm
\begin{eqnarray*}
\begin{array}{l}
\begin{array}{llll}
\displaystyle\|y\|_{X_{p,\mu_{m},\nu_{m}}^{1}}=\left(\mathbb{E}\sup\limits_{0\leq t\leq T}\|y(t)\|_{L^{2}(I)}^{2p}\right)^{\frac{1}{2p}}+\left(\mathbb{E}(\int_{0}^{T}\|y(t)\|_{H^{1}(I)}^{2}dt)^{\frac{p}{2}}\right)^{\frac{2}{p}}
\\\displaystyle~~~~~~~~~~~~~~~~~~~~~+\mathbb{E}\sup\limits_{m}\frac{1}{\nu_{m}}\left(\sup\limits_{|\theta|\leq \mu_{m}}\int_{0}^{T-\theta}\|y(t+\theta)-y(t)\|_{H^{-1}(I)}^{2}dt\right)^{\frac{1}{2}},
\end{array}
\end{array}
\end{eqnarray*}
$X_{p,\mu_{m},\nu_{m}}^{1}$ is a Banach space.
\par
$\bullet$ The space $Y_{\mu_{m},\nu_{m}}^{2}$ is a Banach space with the norm
\begin{eqnarray*}
\begin{array}{l}
\begin{array}{llll}
\displaystyle\|y\|_{Y_{\mu_{m},\nu_{m}}^{2}}=\sup\limits_{0\leq t\leq T}\|y(t)\|_{H^{1}(I)}+\left(\int_{0}^{T}\|y(t)\|_{H^{2}(I)}^{2}dt\right)^{\frac{1}{2}}
\\~~~~~~~~~~~~~~~~\displaystyle+\sup\limits_{m}\frac{1}{\nu_{m}}\sup\limits_{|\theta|\leq \mu_{m}}\int_{0}^{T-\theta}\|y(t+\theta)-y(t)\|_{L^{2}(I)}^{2}dt.
\end{array}
\end{array}
\end{eqnarray*}
\par
$X_{p,\mu_{m},\nu_{m}}^{2}$ is a space consist of all random variables $y$ on $(\Omega,\mathcal{F},\mathbb{P})$ which satisfy
\begin{eqnarray*}
\begin{array}{l}
\begin{array}{llll}
\displaystyle \mathbb{E}\sup\limits_{0\leq t\leq T}\|y(t)\|_{H^{1}(I)}^{2p}<\infty,~~~\mathbb{E}\left(\int_{0}^{T}\|y(t)\|_{H^{2}(I)}^{2}dt\right)^{\frac{p}{2}}<\infty,
\\ \displaystyle \mathbb{E}\sup\limits_{m}\frac{1}{\nu_{m}}\left(\sup\limits_{|\theta|\leq \mu_{m}}\int_{0}^{T-\theta}\|y(t+\theta)-y(t)\|_{L^{2}(I)}^{2}dt\right)^{\frac{1}{2}}<\infty,

\end{array}
\end{array}
\end{eqnarray*}
where $\mathbb{E}$ denotes the mathematical expectation with respect to the probability measure $\mathbb{P}$.
Endowed with the norm
\begin{eqnarray*}
\begin{array}{l}
\begin{array}{llll}
\displaystyle\|y\|_{X_{p,\mu_{m},\nu_{m}}^{2}}=\left(\mathbb{E}\sup\limits_{0\leq t\leq T}\|y(t)\|_{H^{1}(I)}^{2p}\right)^{\frac{1}{2p}}+\left(\mathbb{E}(\int_{0}^{T}\|y(t)\|_{H^{2}(I)}^{2}dt)^{\frac{p}{2}}\right)^{\frac{2}{p}}
\\\displaystyle~~~~~~~~~~~~~~~~~~~~~+\mathbb{E}\sup\limits_{m}\frac{1}{\nu_{m}}\left(\sup\limits_{|\theta|\leq \mu_{m}}\int_{0}^{T-\theta}\|y(t+\theta)-y(t)\|_{L^{2}(I)}^{2}dt\right)^{\frac{1}{2}},
\end{array}
\end{array}
\end{eqnarray*}
$X_{p,\mu_{m},\nu_{m}}^{2}$ is a Banach space.

\par
In order to pass from martingale to pathwise solutions we make essential use of an elementary but powerful characterization of
convergence in probability as given in \cite{G1}.
\begin{lemma}(Gy\"{o}ngy-Krylov Theorem)(See \cite[Lemma 1.1]{G1},\cite[Proposition 6.3]{P3})\label{L1}
Let $E$ be a Polish space equipped with the Borel $\sigma$-algebra. A sequence
of $E$-valued random element $z_{n}$ converges in probability if and only if for every pair
of subsequences $z_{l},z_{m}$ there exists a subsequence $w_{k}=(z_{l(k)},z_{m(k)})$ converging weakly
to a random element $w$ supported on the diagonal $\{(x,y)\in E\times E:x=y\}.$
\end{lemma}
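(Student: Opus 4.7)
The plan is to prove the two implications separately, with the easy direction essentially immediate and the hard direction reduced to a Cauchy-in-probability argument.

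For the forward direction ($\Rightarrow$): I would assume $z_n \to z$ in probability for some $E$-valued random element $z$. Then any subsequences $z_{l_n}$ and $z_{m_n}$ also converge in probability to $z$, so the pair $(z_{l_n}, z_{m_n})$ converges in probability in $E \times E$ (endowed with a product metric) to $(z,z)$. This implies weak convergence along the full sequence, and the limit law $\mathcal{L}(z,z)$ is trivially supported on the diagonal. No further extraction is needed, so the characterization holds in this direction.

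For the backward direction ($\Leftarrow$): The strategy is to show that $\{z_n\}$ is Cauchy in probability; convergence in probability will then follow from the completeness of $L^{0}(\Omega;E)$ under the metric $\mathbb{E}[d(\cdot,\cdot)\wedge 1]$, which is valid because $E$ is Polish, hence completely metrizable. Suppose toward a contradiction that $\{z_n\}$ is \emph{not} Cauchy in probability: there exist $\varepsilon,\delta>0$ and subsequences $(z_{l_n}),(z_{m_n})$ such that
\[ \mathbb{P}\bigl(d(z_{l_n},z_{m_n})\geq \varepsilon\bigr)\geq \delta \qquad \text{for all } n. \]
Applying the hypothesis to this pair of subsequences, I would extract a further subsequence $w_k=(z_{l(k)},z_{m(k)})$ whose law converges weakly in $E\times E$ to some probability measure $\mu$ supported on the diagonal $\Delta=\{(x,y):x=y\}$. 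The set $F=\{(x,y)\in E\times E: d(x,y)\geq \varepsilon\}$ is closed and disjoint from $\Delta$, hence $\mu(F)=0$. The Portmanteau theorem for closed sets then yields
\[ \limsup_{k\to\infty}\mathbb{P}\bigl(d(z_{l(k)},z_{m(k)})\geq \varepsilon\bigr)\leq \mu(F)=0, \]
contradicting the uniform lower bound $\delta$ inherited from the chosen subsequences. Therefore $\{z_n\}$ must be Cauchy in probability, and so it converges.

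The main obstacle, if there is one, is purely bookkeeping: one has to keep track of the doubly-indexed subsequence extraction and to verify carefully that $L^0(\Omega;E)$ is complete under convergence in probability when $E$ is Polish (so that Cauchy in probability genuinely forces existence of a probabilistic limit in $E$). Once these background facts are in place, the entire argument reduces to the single Portmanteau application to the closed set $F$, which exploits precisely the hypothesis that the weak limit sits on the diagonal.
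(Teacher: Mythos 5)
Your proof is correct: both directions are sound, and the key step --- applying the Portmanteau theorem to the closed set $F=\{(x,y):d(x,y)\geq\varepsilon\}$, which is disjoint from the diagonal and hence $\mu$-null, in order to contradict the failure of the Cauchy-in-probability property --- is precisely the argument of Gy\"{o}ngy and Krylov's original Lemma 1.1. The paper states this lemma only as a cited result and supplies no proof of its own, so there is nothing internal to compare against; your write-up is the standard proof, and the remaining details you flag (arranging strictly increasing indices in the extracted subsequences, and completeness of $L^{0}(\Omega;E)$ under convergence in probability for Polish $E$) are routine.
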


\par
Prokhorov's Theorem and Skorohod's Theorem will be used to establish the tightness of $u^{\varepsilon}.$
The following two lemmas will play crucial roles in the proof of Theorem \ref{Th1}.
\begin{lemma}[Prokhorov's Theorem]\label{L4}
A sequence of measures $\{\mu_{n}\}$ on $(E,\mathcal{B}(E))$ is tight if and only if it is relatively compact, that is there exists a
subsequence $\{\mu_{n_{k}}\}$ which weakly converges to a probability measure $\mu.$
\end{lemma}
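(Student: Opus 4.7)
The plan is to prove Prokhorov's theorem by addressing both implications under the standing assumption that $E$ is a Polish space (which is the standard setting in which the statement is non-vacuous). The workhorse in both directions is Ulam's theorem: every Borel probability measure on a Polish space is inner regular on compact sets.

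For the harder direction, ``tight implies relatively compact,'' I would realize $E$ as a Borel subset of a compact metric space $\widehat{E}$ (concretely, the Hilbert cube $[0,1]^{\mathbb{N}}$, via an embedding based on a countable dense sequence). Each $\mu_n$ extends canonically to a Borel probability $\widehat{\mu}_n$ on $\widehat{E}$, and since $C(\widehat{E})$ is separable, the Banach--Alaoglu theorem combined with a diagonal extraction produces a subsequence $\widehat{\mu}_{n_k}$ converging weak-$*$ to some finite nonnegative Borel measure $\widehat{\mu}$ on $\widehat{E}$. Tightness enters precisely to upgrade $\widehat{\mu}$ to a probability measure supported on $E$: for each $\ell$ pick a compact $K_\ell \subset E$ with $\inf_n \mu_n(K_\ell) \geq 1 - 1/\ell$, and since $K_\ell$ is closed in $\widehat{E}$, the portmanteau inequality gives $\widehat{\mu}(K_\ell) \geq 1 - 1/\ell$, so $\widehat{\mu}(E) = 1$ and $\mu := \widehat{\mu}|_E$ is a Borel probability on $E$. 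The final step descends weak convergence from $\widehat{E}$ to $E$: given $\phi \in C_b(E)$ and $\eta > 0$, choose $\ell$ large and use the Tietze extension theorem on $K_\ell$ to produce $\widehat{\phi} \in C(\widehat{E})$ with $\widehat{\phi} = \phi$ on $K_\ell$ and $\|\widehat{\phi}\|_\infty \leq \|\phi\|_\infty$; splitting $\int \phi\, d\mu_{n_k} - \int \widehat{\phi}\, d\widehat{\mu}_{n_k}$ over $K_\ell$ and its complement and using uniform tightness controls the discrepancy, yielding $\int \phi\, d\mu_{n_k} \to \int \phi\, d\mu$.

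For the converse ``relatively compact implies tight,'' I would exploit the fact that $\mathcal{P}(E)$ endowed with the L\'evy--Prokhorov metric $d_{LP}$ metrizes weak convergence and is itself Polish. Fix $\varepsilon > 0$ and, using relative compactness, extract a finite $\varepsilon$-net $\{\nu_1, \dots, \nu_N\}$ for $\{\mu_n\}$ in $d_{LP}$. Ulam applied to each $\nu_i$ furnishes a compact $C_i \subset E$ with $\nu_i(C_i) \geq 1 - \varepsilon/N$. The closed $\varepsilon$-thickening $K := \overline{(\bigcup_i C_i)^{\varepsilon}}$ is totally bounded (each $C_i$ admits a finite $\varepsilon$-net, so its $\varepsilon$-thickening admits a finite $2\varepsilon$-net) and therefore compact by completeness of $E$. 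For every $n$, pick $\nu_i$ with $d_{LP}(\mu_n, \nu_i) < \varepsilon$; by the very definition of $d_{LP}$, $\mu_n(K) \geq \nu_i(C_i) - \varepsilon \geq 1 - 2\varepsilon$, so letting $\varepsilon \to 0$ along a sequence of compacts yields uniform tightness.

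The principal obstacle is the first direction, and within it the delicate step is not the weak-$*$ extraction on $\widehat{E}$ (which is soft, coming from Banach--Alaoglu) nor the mass-transfer identity $\widehat{\mu}(E) = 1$ (a direct portmanteau estimate), but rather the passage from weak convergence of extensions against $C(\widehat{E})$ to weak convergence of the originals against $C_b(E)$. Approximating a general bounded continuous function on $E$ by one on $\widehat{E}$ that is well-controlled off the tightness compacts $K_\ell$ requires a careful Tietze-plus-truncation construction, and this is the only place where more than routine bookkeeping is required.
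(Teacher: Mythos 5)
The paper offers no proof of this lemma: it is quoted as a classical tool (Prokhorov's theorem) and used as a black box, so there is nothing internal to compare your argument against. Judged on its own, your proof is the standard textbook argument (essentially Billingsley's) and is correct in both directions: the Hilbert-cube compactification plus Banach--Alaoglu extraction, the portmanteau upper bound on the closed sets $K_\ell$ to force $\widehat{\mu}(E)=1$, and the Tietze-plus-tightness descent from $C(\widehat{E})$ to $C_b(E)$ all work as you describe; likewise the converse via a finite $\varepsilon$-net in the L\'evy--Prokhorov metric, Ulam's theorem for each net point, and compactness of the closed $\varepsilon$-thickening by total boundedness and completeness. Two small remarks. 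First, you correctly and silently repair the paper's phrasing: as literally written (``there exists a subsequence which weakly converges''), the ``only if'' direction is false (interlace $\delta_0$ with a sequence escaping to infinity); your converse argument uses genuine relative compactness of the family $\{\mu_n\}$, i.e.\ total boundedness in $d_{LP}$, which is the correct hypothesis. Second, in the first direction it is worth saying explicitly that $\widehat{\mu}(\widehat{E})=1$ comes for free from testing against the constant function $1\in C(\widehat{E})$, so that tightness is needed only to locate the mass inside $E$ --- which is exactly the role you assign it.
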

\begin{lemma}[Skorohod's Theorem]\label{L5}
For an arbitrary sequence of probability measures $\{\mu_{n}\}$ on $(E,\mathcal{B}(E))$
weakly converges to a probability measure $\mu,$ there exists a probability space $(\Omega,\mathcal{F},P)$ and random variables $\xi,\xi_{1},...,\xi_{n},...$
with values in $E$ such that the probability law of $\xi_{n},$
$$\mathcal{L}(\mathcal{A})=P\{\omega \in \Omega:\xi_{n}(\omega)\in \mathcal{A}\},$$
for all $\mathcal{A}\in \mathcal{F},$ is $\mu_{n},$ the probability law of $\xi$ is $\mu,$ and $\lim\limits_{n\rightarrow \infty} \xi_{n}=\xi,$ $P-a.s.$

\end{lemma}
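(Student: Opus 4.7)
The plan is to establish Skorohod's representation theorem through a partition-refinement and quantile-stacking construction, building all of the required random variables simultaneously on the canonical probability space $\Omega=[0,1]$ endowed with Lebesgue measure and Borel $\sigma$-algebra. The fundamental difficulty is to produce, on a single probability space, random variables $\xi_{n}$ whose law is exactly $\mu_{n}$ together with a limit $\xi$ of law $\mu$ for which $\xi_{n}\to\xi$ holds pointwise almost surely, rather than merely in distribution.

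First I would exploit the Polish structure of $E$ to choose a sequence of nested countable Borel partitions $\mathcal{P}_{k}=\{A_{k,j}\}_{j\geq 1}$ of $E$ such that every atom $A_{k,j}$ has diameter less than $2^{-k}$, every atom is a $\mu$-continuity set ($\mu(\partial A_{k,j})=0$), and $\mathcal{P}_{k+1}$ refines $\mathcal{P}_{k}$. Their existence uses separability of $E$ together with the fact that around any center only countably many radii give balls whose boundary is $\mu$-charged, so a disjointification of a suitable countable cover produces the partition. Applying the portmanteau characterization of weak convergence to the $\mu$-continuity sets $A_{k,j}$ then yields $\mu_{n}(A_{k,j})\to\mu(A_{k,j})$ as $n\to\infty$ for each fixed pair $(k,j)$.

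Next, on $\Omega=[0,1]$ with Lebesgue measure $P$ and identity map $U$, I would construct $\xi_{n}$ by an iterative quantile procedure: split $[0,1]$ into sub-intervals of lengths $\mu_{n}(A_{1,j})$ in a prescribed ordering; split each such sub-interval into pieces of relative lengths $\mu_{n}(A_{2,j'})/\mu_{n}(A_{1,j})$ corresponding to the children of $A_{1,j}$ in $\mathcal{P}_{2}$; and iterate. To each $\omega$ this assigns a strictly decreasing sequence of partition atoms whose diameters shrink to zero, hence by completeness of $E$ a unique limit point which we declare to be $\xi_{n}(\omega)$. Running the same construction with $\mu$ in place of $\mu_{n}$, using the \emph{same} partitions and ordering, produces $\xi$. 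By construction $\mathcal{L}(\xi_{n})=\mu_{n}$ and $\mathcal{L}(\xi)=\mu$.

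The main obstacle is then to upgrade this construction to genuine almost-sure convergence $\xi_{n}\to\xi$. For each fixed $k$, the partition of $[0,1]$ associated with $\mu_{n}$ converges endpoint-by-endpoint to the one associated with $\mu$, so for every $\omega$ that is not an endpoint of the limit partition one has $\xi_{n}(\omega)$ and $\xi(\omega)$ lying in a common atom of $\mathcal{P}_{k}$ for all sufficiently large $n$, and therefore $d(\xi_{n}(\omega),\xi(\omega))<2^{-k}$ eventually. The set of exceptional $\omega$ is the union of the countably many endpoints of the limit partitions, which is Lebesgue null. A diagonal argument across $k$, together with a Borel--Cantelli bookkeeping of how these exceptional null sets accumulate as $k\to\infty$, yields the desired $P$-a.s.\ convergence; this limit-interchange step is the technical heart of the proof and is where the care in choosing $\mu$-continuity atoms pays off.
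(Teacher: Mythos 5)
The paper does not prove this lemma at all: it is quoted as a classical result (Skorohod's representation theorem) and invoked as a black box, so there is no argument of the author's to compare yours against. Your proposal is the standard textbook construction (nested $\mu$-continuity partitions of the Polish space $E$, quantile-stacking of $[0,1]$, portmanteau convergence of the atom masses), and its overall architecture is the right one.

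There is, however, one step you assert that genuinely requires an argument, namely ``by construction $\mathcal{L}(\xi_{n})=\mu_{n}$.'' Your $\xi_{n}(\omega)$ is defined as the limit of a shrinking chain of atoms $A_{1}\supseteq A_{2}\supseteq\cdots$, and this limit is only guaranteed to lie in $\bigcap_{k}\overline{A_{k}}$, not in $\bigcap_{k}A_{k}$; the atoms are $\mu$-continuity sets, but nothing prevents $\mu_{n}$ from charging their boundaries, so the limit point can escape into a neighbouring atom and a priori distort the law of $\xi_n$. To close this you must show that the set of $\omega$ whose chain has empty intersection is Lebesgue-null: the law of the chain $\omega\mapsto(A_{k}(\omega))_{k}$ on the countable-product space of chains coincides, cylinder by cylinder, with the law of the chain of atoms containing an $E$-valued random variable distributed as $\mu_{n}$, and under the latter the chain always has nonempty intersection because it contains the point itself. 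With that null set removed one gets $P(\xi_{n}\in A_{k,j})=\mu_{n}(A_{k,j})$ on the generating algebra, and the law is identified. Two smaller points: the endpoint convergence of the $[0,1]$-partitions uses $\sum_{j}|\mu_{n}(A_{k,j})-\mu(A_{k,j})|\rightarrow0$ (Scheff\'e's lemma for counting measure, which follows from termwise convergence together with both sums equalling $1$), not merely term-by-term convergence, since the endpoints are infinite partial sums; and no Borel--Cantelli bookkeeping is needed at the end --- the exceptional sets form a fixed countable union of null sets, outside of which ``$d(\xi_{n}(\omega),\xi(\omega))<2^{-k}$ eventually'' holds simultaneously for every $k$.
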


\subsection{The linear stochastic nonclassical diffusion equations}
This section is devoted to some preliminaries for the proof of Theorem \ref{Th1}.
\par
In this subsection, we let $G$ be the bounded domain of $\mathbb{R}^{n}(n\geq 1).$ We will use the results in this subsection with $n=1$ in Section 5.

\begin{definition}
A stochastic process $u$ is said to be a solution of
\begin{equation}\label{1}
\begin{array}{l}
\left\{
\begin{array}{llll}
d(u-\varepsilon\Delta u)+(-\Delta u+f)dt=gdB
\\u(x,t)=0
\\u(0)=u_{0}

\end{array}
\right.
\end{array}
\begin{array}{lll}
{\rm{in}}~G\times(0,T)\\
{\rm{in}}~\partial G\times(0,T)\\
{\rm{in}}~G,
\end{array}
\end{equation}
if
\par
$u$ is $L^{2}(G)$-valued and $\mathcal{F}_{t}$-measurable for each $t\in [0,T],$
\par
$u\in L^{2}(\Omega;C([0,T];L^{2}(G))),$
\par
$u(0)=u_{0}$
\par
and
\begin{equation}\label{4}
\begin{array}{l}
\begin{array}{llll}
(u(t),\varphi)-\varepsilon(u(t),\triangle \varphi)
\\~~~~~~~~~~~=(u_{0},\varphi)-\varepsilon(u_{0},\triangle\varphi)+\int_{0}^{t}( u(s),\triangle\varphi)ds-\int_{0}^{t}(f(s),\varphi)ds+\int_{0}^{t}(g(s),\varphi)dB(s)
\end{array}
\end{array}
\end{equation}
holds for all $t\in [0,T]$ and all $\varphi\in H^{2}(G)\cap  H^{1}_{0}(G)$, for almost all $\omega\in \Omega.$
\end{definition}
\begin{lemma}
(See \cite[Theorem 8.94]{R1})\label{L6}
There exists a set of positive real numbers $\{\lambda_{k}\}_{k\in N}$ such that the corresponding solutions $\{e_{k}\}_{k\in N}$ of the problem
\begin{eqnarray}
\begin{array}{l}
\left\{
\begin{array}{llll}
-\triangle e_{k}=\lambda_{k}e_{k}
\\e_{k}(x)=0
\end{array}
\right.
\end{array}
\begin{array}{lll}
{\rm{in}}~G
\\{\rm{on}}~\partial G
\end{array}
\end{eqnarray}
form a basis in $H^{2}(G)\cap H^{1}_{0}(G),$ which is orthonormal in $L^{2}(G).$
\end{lemma}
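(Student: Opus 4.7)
The plan is to invoke the spectral theorem for compact self-adjoint operators applied to the inverse of the Dirichlet Laplacian, which is the standard route to this classical result. First I would introduce the solution operator $T:L^{2}(G)\to L^{2}(G)$ that sends $f$ to the unique weak solution $u\in H^{1}_{0}(G)$ of $-\Delta u=f$ in $G$ with $u=0$ on $\partial G$. The existence and uniqueness of such $u$, together with the a priori bound $\|u\|_{H^{1}_{0}(G)}\leq C\|f\|_{L^{2}(G)}$, follows from the Lax--Milgram theorem applied to the bilinear form $a(u,v)=\int_{G}\nabla u\cdot\nabla v\,dx$, which is coercive on $H^{1}_{0}(G)$ by Poincaré's inequality.

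Next I would verify that $T$ is a compact, self-adjoint, positive operator on $L^{2}(G)$. Compactness is a consequence of the Rellich--Kondrachov embedding $H^{1}_{0}(G)\hookrightarrow\hookrightarrow L^{2}(G)$ composed with the bounded map $L^{2}(G)\to H^{1}_{0}(G)$; self-adjointness comes from the symmetry of $a(\cdot,\cdot)$, since $(Tf,g)=a(Tf,Tg)=a(Tg,Tf)=(f,Tg)$; and $(Tf,f)=\|\nabla Tf\|_{L^{2}(G)}^{2}\geq 0$ with equality only when $f=0$, so $T$ is positive. Applying the Hilbert--Schmidt spectral theorem yields an $L^{2}$-orthonormal basis $\{e_{k}\}_{k\in\mathbb{N}}$ of $L^{2}(G)$ with $Te_{k}=\mu_{k}e_{k}$, $\mu_{k}>0$, $\mu_{k}\downarrow 0$. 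Setting $\lambda_{k}:=\mu_{k}^{-1}>0$ gives $-\Delta e_{k}=\lambda_{k}e_{k}$ weakly, and standard elliptic regularity for the Dirichlet problem on a sufficiently smooth domain upgrades $e_{k}$ to $H^{2}(G)\cap H^{1}_{0}(G)$.

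Finally I would verify that $\{e_{k}\}$ is actually a \emph{basis} of $H^{2}(G)\cap H^{1}_{0}(G)$, not merely of $L^{2}(G)$. Given $v\in H^{2}(G)\cap H^{1}_{0}(G)$, expand $v=\sum_{k}c_{k}e_{k}$ in $L^{2}$ with $c_{k}=(v,e_{k})_{L^{2}(G)}$. An integration by parts using $e_{k}\in H^{1}_{0}(G)$ gives $(-\Delta v,e_{k})=\lambda_{k}c_{k}$, so Parseval yields $\sum_{k}\lambda_{k}^{2}c_{k}^{2}=\|\Delta v\|_{L^{2}(G)}^{2}<\infty$. The standard elliptic estimate $\|w\|_{H^{2}(G)}\leq C(\|w\|_{L^{2}(G)}+\|\Delta w\|_{L^{2}(G)})$ applied to the tails $\sum_{k>N}c_{k}e_{k}$ shows that the partial sums form a Cauchy sequence in $H^{2}(G)\cap H^{1}_{0}(G)$, hence converge to $v$ in this stronger norm.

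The main technical obstacle is the last step: promoting $L^{2}$-completeness of the eigenfunction system to completeness in $H^{2}(G)\cap H^{1}_{0}(G)$. This is where one must couple Parseval's identity for $-\Delta v$ with the elliptic regularity estimate, and this is also the point at which the smoothness of $\partial G$ (implicit in the choice of $G$) is used; in the application in Section 5, $G=I=[0,1]$ is one-dimensional and the eigenfunctions are the explicit sines $e_{k}(x)=\sqrt{2}\sin(k\pi x)$ with $\lambda_{k}=k^{2}\pi^{2}$, so every step above can also be verified by direct computation with Fourier series.
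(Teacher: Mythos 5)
The paper offers no proof of this lemma at all---it is quoted directly from \cite[Theorem 8.94]{R1}---so there is nothing in the text to compare against; your argument is the standard one that underlies that cited theorem (Lax--Milgram for the inverse Dirichlet Laplacian, Rellich--Kondrachov compactness, the Hilbert--Schmidt spectral theorem, elliptic regularity, and the Parseval-plus-elliptic-estimate upgrade of completeness from $L^{2}(G)$ to $H^{2}(G)\cap H^{1}_{0}(G)$), and it is correct. Your remark that boundary smoothness is implicitly needed for the $H^{2}$ regularity and for the estimate $\|w\|_{H^{2}(G)}\leq C(\|w\|_{L^{2}(G)}+\|\Delta w\|_{L^{2}(G)})$ is a fair observation the paper glosses over, and is harmless in the actual application where $G=I=[0,1]$ and the eigenfunctions are explicit sines.
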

\begin{proposition}\label{Pro1}
For any $\varepsilon\in [0,1],$ there exists a constant $C$ independent of $\varepsilon.$
\par
1) If $u_{0}\in L^{2}(\Omega;L^{2}(G)), f\in L^{2}(\Omega;L^{2}(0,T;H^{-1}(G))), g\in L^{2}(\Omega;L^{2}(0,T;L^{2}(G))) $, then (\ref{1}) has a unique solution $u\in L^{2}(\Omega;C([0,T];L^{2}(G)))$ and
\begin{equation}\label{24}
\begin{array}{l}
\begin{array}{llll}
\mathbb{E} \sup\limits_{0\leq t\leq T}\|u(t)\|_{L^{2}(G)}^{2}
\leq C[\mathbb{E}\|u_{0}\|_{L^{2}(G)}^{2}+\mathbb{E} \int_{0}^{T}\|f(t)\|_{H^{-1}(G)}^{2}dt+\mathbb{E} \int_{0}^{T}\|g(t)\|_{L^{2}(G)}^{2}dt].
\end{array}
\end{array}
\end{equation}
\par
2) If $u_{0}\in L^{2}(\Omega;H^{1}_{0}(G)), f\in L^{2}(\Omega;L^{2}(0,T;H^{-1}(G))), g\in L^{2}(\Omega;L^{2}(0,T;L^{2}(G))) $, then (\ref{1}) has a unique solution $u\in L^{2}(\Omega;C([0,T];H^{1}_{0}(G)))\cap L^{2}(\Omega,L^{2}(0,T;H^{1}(G)))$ and
\begin{equation}\label{29}
\begin{array}{l}
\begin{array}{llll}
\mathbb{E} \sup\limits_{0\leq t\leq T}(\|u(t)\|_{L^{2}(G)}^{2}+\varepsilon\|\nabla u(t)\|_{L^{2}(G)}^{2})+\mathbb{E} \int_{0}^{T}\|\nabla u(t)\|_{L^{2}(G)}^{2}dt
\\\leq C[\mathbb{E}(\|u_{0}\|_{L^{2}(G)}^{2}+\|\nabla u_{0}\|_{L^{2}(G)}^{2})+\mathbb{E} \int_{0}^{T}\|f(t)\|_{H^{-1}(G)}^{2}dt+\mathbb{E} \int_{0}^{T}\|g(t)\|_{L^{2}(G)}^{2}dt].
\end{array}
\end{array}
\end{equation}
Moreover, it holds that
\begin{equation}
\begin{array}{l}
\begin{array}{llll}
(u(t),\varphi)+\varepsilon(\nabla u(t),\nabla\varphi)
\\~~~~~~~~~~~=(u_{0},\varphi)+\varepsilon(\nabla u_{0},\nabla\varphi)-\int_{0}^{t}(\nabla u(s),\nabla\varphi)ds-\int_{0}^{t}(f(s),\varphi)ds+\int_{0}^{t}(g(s),\varphi)dB(s)
\end{array}
\end{array}
\end{equation}
for all $t\in [0,T]$ and all $\varphi\in H^{1}_{0}(G)$, for almost all $\omega\in \Omega.$
\par
3) If $u_{0}\in L^{2}(\Omega;H^{2}(G)\cap H^{1}_{0}(G)), f\in L^{2}(\Omega;L^{2}(0,T;L^{2}(G))), g\in L^{2}(\Omega;L^{2}(0,T;H^{1}(G))) $, then (\ref{1}) has a unique solution $u\in L^{2}(\Omega;C([0,T];H^{2}(G)\cap H^{1}_{0}(G)))\cap L^{2}(\Omega,L^{2}(0,T;H^{2}(G)))$
and
\begin{equation}\label{8}
\begin{array}{l}
\begin{array}{llll}
\mathbb{E} \sup\limits_{0\leq t\leq T}(\|\nabla u(t)\|_{L^{2}(G)}^{2}+\varepsilon\|\triangle u(t)\|_{L^{2}(G)}^{2})+\mathbb{E} \int_{0}^{T}\|\triangle u(t)\|_{L^{2}(G)}^{2}dt
\\\leq C[\mathbb{E}(\|\nabla u_{0}\|_{L^{2}(G)}^{2}+\|\triangle u_{0}\|_{L^{2}(G)}^{2})+\mathbb{E} \int_{0}^{T}\|f(t)\|_{L^{2}(G)}^{2}dt+\mathbb{E} \int_{0}^{T}\|g(t)\|_{H^{1}(G)}^{2}dt].
\end{array}
\end{array}
\end{equation}
Moreover, it holds that
\begin{equation}
\begin{array}{l}
\begin{array}{llll}
(u(t),\varphi)-\varepsilon(\triangle u(t),\varphi)
\\~~~~~~~~~~~=(u_{0},\varphi)-\varepsilon(\triangle u_{0},\varphi)+\int_{0}^{t}(\triangle u(s),\varphi)ds-\int_{0}^{t}(f(s),\varphi)ds+\int_{0}^{t}(g(s),\varphi)dB(s)
\end{array}
\end{array}
\end{equation}
for all $t\in [0,T]$ and all $\varphi\in L^{2}(G)$, for almost all $\omega\in \Omega.$
\end{proposition}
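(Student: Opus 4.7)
My approach will be a Faedo--Galerkin scheme using the Dirichlet eigenbasis $\{e_k\}_{k\ge 1}$ from Lemma \ref{L6}. Setting $V_m=\mathrm{span}\{e_1,\dots,e_m\}$ and $u_m(t)=\sum_{k=1}^m a_k^m(t)\, e_k$, the projection of (\ref{1}) onto $V_m$ reduces, via $(u_m,e_k)=a_k^m$ and $(-\Delta u_m,e_k)=\lambda_k a_k^m$, to the linear SDE
\[
(1+\varepsilon\lambda_k)\, d a_k^m + (\lambda_k a_k^m + (f,e_k))\, dt = (g,e_k)\, dB,
\]
whose unique strong solution is immediate from classical linear SDE theory. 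The heart of the proof is then to establish a priori estimates uniform in both $m$ and $\varepsilon$, and to pass to the limit $m\to\infty$.

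The key energy identity will come from applying It\^o's formula to $\sum_k (1+\varepsilon\lambda_k)(a_k^m)^2 = \|u_m\|_{L^2}^2 + \varepsilon\|\nabla u_m\|_{L^2}^2$. The It\^o correction is $\sum_k (g,e_k)^2/(1+\varepsilon\lambda_k)\le \|g\|_{L^2}^2$, so one obtains
\[
d\bigl(\|u_m\|^2 + \varepsilon\|\nabla u_m\|^2\bigr) + \|\nabla u_m\|^2\, dt \le \bigl(\|f\|^2_{H^{-1}} + \|g\|^2_{L^2}\bigr)\, dt + 2(g,u_m)\, dB.
\]
Integrating, taking expectations, and using the Burkholder--Davis--Gundy inequality to absorb the resulting martingale into $\tfrac12\mathbb{E}\sup_t\|u_m\|^2$ yields part 2), i.e.\ (\ref{29}), since $\varepsilon\|\nabla u_{0,m}\|^2 \le \|\nabla u_0\|^2$ whenever $u_0\in H^1_0(G)$. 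For part 3) I will repeat the same calculation with the weight $\lambda_k(1+\varepsilon\lambda_k)$, producing an identity for $\|\nabla u_m\|^2 + \varepsilon\|\Delta u_m\|^2$; the It\^o correction becomes $\sum_k \lambda_k(g,e_k)^2/(1+\varepsilon\lambda_k)$, uniformly bounded by $\|\nabla g\|_{L^2}^2 \le \|g\|_{H^1}^2$, and delivers (\ref{8}).

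The delicate case is part 1), where $u_0\in L^2(G)$ only: the energy inequality above carries $\varepsilon\|\nabla u_{0,m}\|^2$ on the right-hand side, which is not controlled by $\|u_0\|_{L^2}^2$ uniformly in $m$. I will circumvent this by linearity, decomposing $u_m = A_m + B_m$, where $A_m$ solves the homogeneous Galerkin problem with data $P_m u_0$ while $B_m$ solves the inhomogeneous Galerkin problem with zero initial data. Explicitly,
\[
A_m(t) = \sum_{k=1}^m e^{-\lambda_k t/(1+\varepsilon\lambda_k)}\,(u_0,e_k)\, e_k,
\]
which is an $L^2$-contraction uniformly in $\varepsilon$ and $m$, hence $\sup_t\|A_m\|_{L^2}\le \|u_0\|_{L^2}$. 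The previous energy estimate applied to $B_m$ now contains no problematic initial-data term and is controlled by $f$ and $g$ alone; adding the two contributions produces (\ref{24}).

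Passing to the limit $m\to\infty$ will be straightforward because of linearity: the uniform bounds supply weakly/weakly-$\star$ convergent subsequences and every term in the Galerkin identity is closed under the relevant weak convergence, so the limit $u$ satisfies (\ref{4}). Uniqueness is then immediate by applying the same energy estimate to the difference $w$ of two solutions, which satisfies the equation with zero data and therefore $\mathbb{E}\sup_t\|w\|^2 = 0$. Pathwise continuity of $u$ in the claimed spaces follows from writing the equation for $v = u - \varepsilon\Delta u$ as a standard stochastic evolution equation with continuous paths. The main obstacle throughout is keeping every constant independent of $\varepsilon\in[0,1]$: the splitting trick for part 1) and the elementary estimate $1/(1+\varepsilon\lambda_k)\le 1$ in the It\^o correction terms are the two places where the $\varepsilon$-uniformity actually requires care.
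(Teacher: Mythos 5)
Your scheme is essentially the paper's: Galerkin projection onto the Dirichlet eigenbasis of Lemma \ref{L6}, diagonalization into the scalar SDEs $(1+\varepsilon\lambda_k)dc_k+(\lambda_k c_k+f_k)dt=g_k\,dB$, It\^o's formula on weighted sums of squares with weights $1$, $1+\varepsilon\lambda_k$ and $\lambda_k(1+\varepsilon\lambda_k)$ for parts 1)--3), Burkholder--Davis--Gundy to absorb the martingale, and $\varepsilon$-uniformity from $1/(1+\varepsilon\lambda_k)\le 1$. The one genuinely different step is your treatment of part 1). The difficulty you identify there is an artifact of insisting on the weighted functional $\|u_m\|^2+\varepsilon\|\nabla u_m\|^2$: the paper instead applies It\^o directly to the unweighted $\sum_k c_k^2=\|u^m\|_{L^2}^2$, so the initial-data term is just $\|P_m u_0\|_{L^2}^2\le\|u_0\|_{L^2}^2$ and no $\varepsilon\|\nabla u_{0m}\|^2$ ever appears (the $f$-term is handled by Young's inequality against the dissipation, $2|c_kf_k|/(1+\varepsilon\lambda_k)\le \lambda_k c_k^2/(1+\varepsilon\lambda_k)+f_k^2/((1+\varepsilon\lambda_k)\lambda_k)$, whose sum is controlled by $\|f\|_{H^{-1}}^2$). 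Your homogeneous/inhomogeneous splitting with the explicit semigroup $e^{-\lambda_k t/(1+\varepsilon\lambda_k)}$ is a valid, if more roundabout, workaround; it buys nothing here but would be the right tool if the unweighted computation were unavailable. The one place where your argument is genuinely weaker is the passage to the limit: extracting weak and weak-$\star$ limits from the uniform bounds does not by itself yield $u\in L^2(\Omega;C([0,T];L^2(G)))$, and your fallback of reading continuity off the equation for $v=u-\varepsilon\Delta u$ is delicate precisely because $(I-\varepsilon\Delta)^{-1}$ does not map the natural space for $v$ into $C([0,T];L^2)$ uniformly in $\varepsilon$ (and degenerates at $\varepsilon=0$). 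The paper avoids this by observing that the same energy estimate applied to $u^m-u^n$ shows the Galerkin sequence is Cauchy in $L^2(\Omega;C([0,T];L^2(G)))$ (resp.\ in the $H^1$ and $H^2$ versions for parts 2) and 3)), which delivers the strong convergence, the path continuity, and the uniqueness in one stroke; you should replace your weak-compactness limit by this Cauchy argument, which is available verbatim from the estimates you have already established.
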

\begin{proof}
The main idea in this part comes from \cite{K1,G2,G3,G4}.
\par
We consider the stochastic differential equation
\begin{equation}\label{2}
\begin{array}{l}
\left\{
\begin{array}{llll}
(1+\varepsilon\lambda_{k})dc_{k}+(\lambda_{k}c_{k}+f_{k})dt=g_{k}dB
\\
c_{k}(0)= (u_{0},e_{k}),

\end{array}
\right.
\end{array}
\end{equation}
where
\begin{equation*}
\begin{array}{l}
\begin{array}{llll}
f_{k}(t)=(f(t),e_{k}),~g_{k}(t)=(g(t),e_{k}).
\end{array}
\end{array}
\end{equation*}
We set
\begin{equation*}
\begin{array}{l}
\begin{array}{llll}
u^{m}=\sum_{k=1}^{m}c_{k}(t)e_{k},\\
u_{0m}=\sum_{k=1}^{m}c_{k}(0)e_{k}=\sum_{k=1}^{m}(u_{0},e_{k})e_{k},\\
f^{m}=\sum_{k=1}^{m}c_{k}(t)e_{k},\\
g^{m}=\sum_{k=1}^{m}c_{k}(t)e_{k}
,
\end{array}
\end{array}
\end{equation*}
it holds that
\begin{equation*}
\begin{array}{l}
\begin{array}{llll}
\|u_{0m}-u_{0}\|_{L^{2}(\Omega;L^{2}(G))}\rightarrow0,\\
\|f^{m}-f\|_{L^{2}(\Omega;L^{2}(0,T;H^{-1}(G)))}\rightarrow0,\\
\|g^{m}-g\|_{L^{2}(\Omega;L^{2}(0,T;L^{2}(G)))}\rightarrow0
,
\end{array}
\end{array}
\end{equation*}
as $m\rightarrow\infty.$
\par
1) We have
\begin{equation*}
\begin{array}{l}
\begin{array}{llll}
\|u^{m}(t)\|_{L^{2}(G)}^{2}=\sum_{k=1}^{m}c_{k}^{2}(t),
\end{array}
\end{array}
\end{equation*}
it follows from It\^{o}'s rule that
\begin{equation*}
\begin{array}{l}
\begin{array}{llll}
dc_{k}^{2}=2c_{k}dc_{k}+(dc_{k})^{2}
\\~~~~=2c_{k}\frac{1}{1+\varepsilon\lambda_{k}}(-\lambda_{k}c_{k}dt-f_{k}dt+g_{k}dB)+\frac{1}{(1+\varepsilon\lambda_{k})^{2}}g_{k}^{2}dt
\\~~~~=-\frac{2\lambda_{k}c_{k}^{2}}{1+\varepsilon\lambda_{k}}dt-\frac{2c_{k}f_{k}}{1+\varepsilon\lambda_{k}}dt+\frac{2 c_{k}g_{k}}{1+\varepsilon\lambda_{k}}dB+\frac{1}{(1+\varepsilon\lambda_{k})^{2}}g_{k}^{2}dt,
\end{array}
\end{array}
\end{equation*}
thus,
\begin{equation*}
\begin{array}{l}
\begin{array}{llll}
c_{k}^{2}(t)+\int_{0}^{t}\frac{2\lambda_{k}c_{k}^{2}}{1+\varepsilon\lambda_{k}}ds
\\=c_{k}^{2}(0)-\int_{0}^{t}\frac{2c_{k}f_{k}}{1+\varepsilon\lambda_{k}}ds+\int_{0}^{t}\frac{2c_{k}g_{k}}{1+\varepsilon\lambda_{k}}dB+\int_{0}^{t}\frac{1}{(1+\varepsilon\lambda_{k})^{2}}g_{k}^{2}ds
\\\leq c_{k}^{2}(0)+\int_{0}^{t}\frac{\lambda_{k}c_{k}^{2}}{1+\varepsilon\lambda_{k}}ds+\int_{0}^{t}\frac{f_{k}^{2}}{(1+\varepsilon\lambda_{k})\lambda_{k}}ds+\int_{0}^{t}\frac{2c_{k}g_{k}}{1+\varepsilon\lambda_{k}}dB+\int_{0}^{t}\frac{1}{(1+\varepsilon\lambda_{k})^{2}}g_{k}^{2}ds,
\end{array}
\end{array}
\end{equation*}
namely, we have
\begin{equation*}
\begin{array}{l}
\begin{array}{llll}
c_{k}^{2}(t)+\int_{0}^{t}\frac{\lambda_{k}c_{k}^{2}}{1+\varepsilon\lambda_{k}}ds
\leq c_{k}^{2}(0)+\int_{0}^{t}\frac{f_{k}^{2}}{(1+\varepsilon\lambda_{k})\lambda_{k}}ds+\int_{0}^{t}\frac{2c_{k}g_{k}}{1+\varepsilon\lambda_{k}}dB+\int_{0}^{t}\frac{1}{(1+\varepsilon\lambda_{k})^{2}}g_{k}^{2}ds.
\end{array}
\end{array}
\end{equation*}
Taking mathematical expectation from both sides of the above inequality, we have
\begin{equation}\label{27}
\begin{array}{l}
\begin{array}{llll}
\mathbb{E} \int_{0}^{T}\frac{\lambda_{k}c_{k}^{2}}{1+\varepsilon\lambda_{k}}dt
\leq \mathbb{E} c_{k}^{2}(0)+\mathbb{E}\int_{0}^{T}\frac{f_{k}^{2}}{(1+\varepsilon\lambda_{k})\lambda_{k}}dt+\mathbb{E}\int_{0}^{T}\frac{1}{(1+\varepsilon\lambda_{k})^{2}}g_{k}^{2}dt
.
\end{array}
\end{array}
\end{equation}
By the Burkholder-Davis-Gundy inequality, we have
\begin{equation*}
\begin{array}{l}
\begin{array}{llll}
\mathbb{E} \sup\limits_{0\leq t\leq T}c_{k}^{2}(t)
\\\leq \mathbb{E}c_{k}^{2}(0)+\mathbb{E} \int_{0}^{T}\frac{f_{k}^{2}}{(1+\varepsilon\lambda_{k})\lambda_{k}}dt+\mathbb{E} \sup\limits_{0\leq t\leq T}|\int_{0}^{t}\frac{2c_{k}g_{k}}{1+\varepsilon\lambda_{k}}dB|+\mathbb{E} \int_{0}^{T}\frac{1}{(1+\varepsilon\lambda_{k})^{2}}g_{k}^{2}dt
\\\leq \mathbb{E}c_{k}^{2}(0)+\mathbb{E} \int_{0}^{T}\frac{f_{k}^{2}}{(1+\varepsilon\lambda_{k})\lambda_{k}}dt+\frac{1}{2}\mathbb{E} \sup\limits_{0\leq t\leq T}c_{k}^{2}(t)+C\mathbb{E} \int_{0}^{T}\frac{1}{(1+\varepsilon\lambda_{k})^{2}}g_{k}^{2}dt+\mathbb{E} \int_{0}^{T}\frac{1}{(1+\varepsilon\lambda_{k})^{2}}g_{k}^{2}dt,
\end{array}
\end{array}
\end{equation*}
thus,
\begin{equation}\label{22}
\begin{array}{l}
\begin{array}{llll}
\mathbb{E} \sup\limits_{0\leq t\leq T}c_{k}^{2}(t)
\leq C(\mathbb{E}c_{k}^{2}(0)+\mathbb{E} \int_{0}^{T}\frac{f_{k}^{2}}{(1+\varepsilon\lambda_{k})\lambda_{k}}dt+\mathbb{E} \int_{0}^{T}\frac{1}{(1+\varepsilon\lambda_{k})^{2}}g_{k}^{2}dt).
\end{array}
\end{array}
\end{equation}
According to (\ref{27}) and (\ref{22}), we have
\begin{equation}\label{28}
\begin{array}{l}
\begin{array}{llll}
\mathbb{E} \sup\limits_{0\leq t\leq T}c_{k}^{2}(t)+\mathbb{E} \int_{0}^{T}\frac{\lambda_{k}c_{k}^{2}}{1+\varepsilon\lambda_{k}}dt
\leq C(\mathbb{E}c_{k}^{2}(0)+\mathbb{E} \int_{0}^{T}\frac{f_{k}^{2}}{(1+\varepsilon\lambda_{k})\lambda_{k}}dt+\mathbb{E} \int_{0}^{T}\frac{1}{(1+\varepsilon\lambda_{k})^{2}}g_{k}^{2}dt).
\end{array}
\end{array}
\end{equation}
Taking the sum on $k$ in (\ref{28}), we get
\begin{equation}\label{31}
\begin{array}{l}
\begin{array}{llll}
\mathbb{E} \sup\limits_{0\leq t\leq T}\|u^{m}(t)\|_{L^{2}(G)}^{2}
\leq C[\mathbb{E}\|u_{0m}\|_{L^{2}(G)}^{2}+\mathbb{E} \int_{0}^{T}\|f^{m}(t)\|_{H^{-1}(G)}^{2}dt+\mathbb{E} \int_{0}^{T}\|g^{m}(t)\|_{L^{2}(G)}^{2}dt]
\end{array}
\end{array}
\end{equation}
thus,
\begin{equation}\label{23}
\begin{array}{l}
\begin{array}{llll}
\mathbb{E} \sup\limits_{0\leq t\leq T}\|(u^{m}-u^{n})(t)\|_{L^{2}(G)}^{2}\\
\leq C[\mathbb{E}\|u_{0m}-u_{0n}\|_{L^{2}(G)}^{2}+\mathbb{E} \int_{0}^{T}\|(f^{m}-f^{n})(t)\|_{H^{-1}(G)}^{2}dt+\mathbb{E} \int_{0}^{T}\|(g^{m}-g^{n})(t)\|_{L^{2}(G)}^{2}dt],
\end{array}
\end{array}
\end{equation}
where $C$ denotes a positive constant independent of $n,m$ and $T.$
\par
Next we observe that
the right-hand side of (\ref{23}) converges to zero as $n,m\rightarrow \infty$. Hence, it follows that
$\{u^{m}\}_{m=1}^{+\infty}$ is a Cauchy sequence that converges strongly in $L^{2}(\Omega,C([0,T];L^{2}(G)))$. Let
$u$ be the limit, namely, we have
\begin{equation*}
\begin{array}{l}
\begin{array}{llll}
\|u^{m}-u\|_{L^{2}(\Omega,C([0,T];L^{2}(G)))}\rightarrow0,
\end{array}
\end{array}
\end{equation*}
as $m\rightarrow\infty.$
\par
Also, it follows from (\ref{2}) that
\begin{equation*}
\begin{array}{l}
\begin{array}{llll}
(u^{m}(t),e_{k})-\varepsilon(u^{m}(t),\triangle e_{k})
\\~~~~~~~~~~~=(u_{0m},e_{k})-\varepsilon(u_{0m},\triangle e_{k})+\int_{0}^{t}( u^{m}(s),\triangle e_{k})ds-\int_{0}^{t}(f^{m}(s),e_{k})ds+\int_{0}^{t}(g^{m}(s),e_{k})dB(s)
\end{array}
\end{array}
\end{equation*}
for all $k=1,2,3\cdots$, and all $t\in [0,T]$, for almost all $\omega\in \Omega$.
\par
By taking the limit in above equality as $m$ goes to infinity, it holds that
\begin{equation*}
\begin{array}{l}
\begin{array}{llll}
(u(t),e_{k})-\varepsilon(u(t),\triangle e_{k})
\\~~~~~~~~~~~=(u_{0},e_{k})-\varepsilon(u_{0},\triangle e_{k})+\int_{0}^{t}( u(s),\triangle e_{k})ds-\int_{0}^{t}(f(s),e_{k})ds+\int_{0}^{t}(g(s),e_{k})dB(s)
\end{array}
\end{array}
\end{equation*}
for all $k=1,2,3\cdots$, and all $t\in [0,T]$, for almost all $\omega\in \Omega$.
Thus, we have
\begin{equation*}
\begin{array}{l}
\begin{array}{llll}
(u(t),\varphi)-\varepsilon(u(t),\triangle \varphi)
\\~~~~~~~~~~~=(u_{0},\varphi)-\varepsilon(u_{0},\triangle\varphi)+\int_{0}^{t}( u(s),\triangle\varphi)ds-\int_{0}^{t}(f(s),\varphi)ds+\int_{0}^{t}(g(s),\varphi)dB(s)
\end{array}
\end{array}
\end{equation*}
holds for all $t\in [0,T]$ and all $\varphi\in H^{2}(G)\cap H^{1}_{0}(G)$, for almost all $\omega\in \Omega.$
\par
Namely, $u$ is a solution to (\ref{1}).
By taking the limit in (\ref{31}) as $m$ goes to infinity, we can obtain (\ref{24}).
\par
Now, we prove the uniqueness of the solution for (\ref{1}).
Indeed, if $u_{1}$ and $u_{2}$ are the solutions for (\ref{1}), according to (\ref{24}), we have
\begin{equation*}
\begin{array}{l}
\begin{array}{llll}
\mathbb{E} \sup\limits_{0\leq t\leq T}\|(u_{1}-u_{2})(t)\|_{L^{2}(G)}^{2}\leq 0,
\end{array}
\end{array}
\end{equation*}
thus,
\begin{equation*}
\begin{array}{l}
\begin{array}{llll}
u_{1}\equiv u_{2}.
\end{array}
\end{array}
\end{equation*}
\par
2) Let
\begin{equation*}
h_{k}=(1+\varepsilon\lambda_{k})c_{k}^{2},
\end{equation*}
following \cite[P28]{K2} or \cite{P2}, we have
\begin{equation*}
\begin{array}{l}
\begin{array}{llll}
\|u^{m}(t)\|_{L^{2}(G)}^{2}+\varepsilon\|\nabla u^{m}(t)\|_{L^{2}(G)}^{2}=\sum_{k=1}^{m}(1+\varepsilon \lambda_{k})c_{k}^{2}(t)=\sum_{k=1}^{m}h_{k}.
\end{array}
\end{array}
\end{equation*}
\par
By multiplying (\ref{28}) by $1+\varepsilon \lambda_{k}$, we have
\begin{equation}\label{30}
\begin{array}{l}
\begin{array}{llll}
\mathbb{E} \sup\limits_{0\leq t\leq T}h_{k}(t)+\mathbb{E} \int_{0}^{T}\lambda_{k}c_{k}^{2}dt
\leq C(\mathbb{E}h_{k}(0)+\mathbb{E} \int_{0}^{T}\frac{f_{k}^{2}}{\lambda_{k}}dt+\mathbb{E} \int_{0}^{T}\frac{1}{1+\varepsilon \lambda_{k}}g_{k}^{2}dt)
\\
\leq C(\mathbb{E}h_{k}(0)+\mathbb{E} \int_{0}^{T}\frac{f_{k}^{2}}{\lambda_{k}}dt+\mathbb{E} \int_{0}^{T}g_{k}^{2}dt)

.
\end{array}
\end{array}
\end{equation}
Taking the sum on $k$ in (\ref{30}), we get
\begin{equation}\label{5}
\begin{array}{l}
\begin{array}{llll}
\mathbb{E} \sup\limits_{0\leq t\leq T}(\|u^{m}(t)\|_{L^{2}(G)}^{2}+\varepsilon\|\nabla u^{m}(t)\|_{L^{2}(G)}^{2})+\mathbb{E} \int_{0}^{T}\|\nabla u^{m}(t)\|_{L^{2}(G)}^{2}dt
\\\leq C[\mathbb{E}(\|u_{0m}\|_{L^{2}(G)}^{2}+\varepsilon\|\nabla u_{0m}\|_{L^{2}(G)}^{2})+\mathbb{E} \int_{0}^{T}\|f^{m}(t)\|_{H^{-1}(G)}^{2}dt+\mathbb{E} \int_{0}^{T}\|g^{m}(t)\|_{L^{2}(G)}^{2}dt],
\end{array}
\end{array}
\end{equation}
thus,
\begin{equation}\label{3}
\begin{array}{l}
\begin{array}{llll}
\mathbb{E} \sup\limits_{0\leq t\leq T}(\|(u^{m}-u^{n})(t)\|_{L^{2}(G)}^{2}+\varepsilon\|\nabla (u^{m}-u^{n})(t)\|_{L^{2}(G)}^{2})
\\+\mathbb{E} \int_{0}^{T}\|\nabla (u^{m}-u^{n})(t)\|_{L^{2}(G)}^{2}dt
\\\leq C[\mathbb{E}(\|u_{0m}-u_{0n}\|_{L^{2}(G)}^{2}+\varepsilon\|\nabla u_{0m}-\nabla u_{0n}\|_{L^{2}(G)}^{2})
\\+\mathbb{E} \int_{0}^{T}\|(f^{m}-f^{n})(t)\|_{H^{-1}(G)}^{2}dt+\mathbb{E} \int_{0}^{T}\|(g^{m}-g^{n})(t)\|_{L^{2}(G)}^{2}dt],
\end{array}
\end{array}
\end{equation}
where $C$ denotes a positive constant independent of $n,m$ and $T.$ Next we observe that
the right-hand side of (\ref{3}) converges to zero as $n,m\rightarrow \infty$. Hence, it follows that
$\{u^{m}\}_{m=1}^{+\infty}$ is a Cauchy sequence that converges strongly in $L^{2}(\Omega,C([0,T];H^{1}(G)))\cap L^{2}(\Omega,L^{2}(0,T;H^{1}(G)))$. Let
$u$ be the limit, namely, we have
\begin{equation*}
\begin{array}{l}
\begin{array}{llll}
\|u^{m}-u\|_{L^{2}(\Omega,C([0,T];H^{1}(G)))\bigcap L^{2}(\Omega,L^{2}(0,T;H^{1}(G)))}\rightarrow0,
\end{array}
\end{array}
\end{equation*}
as $m\rightarrow\infty.$
\par
Also, it follows from (\ref{2}) that
\begin{equation*}
\begin{array}{l}
\begin{array}{llll}
(u^{m}(t),e_{k})+\varepsilon(\nabla u^{m}(t),\nabla e_{k})
\\~~~~~~~~~~~=(u_{0m},e_{k})+\varepsilon(\nabla u_{0m},\nabla e_{k})-\int_{0}^{t}(\nabla u^{m}(s),\nabla e_{k})ds+\int_{0}^{t}(f^{m}(s),e_{k})ds+\int_{0}^{t}(g^{m}(s),e_{k})dB(s)
\end{array}
\end{array}
\end{equation*}
for all $k=1,2,3\cdots$, and all $t\in [0,T]$, for almost all $\omega\in \Omega$.
\par
By taking the limit in above equality as $m$ goes to infinity, it holds that
\begin{equation*}
\begin{array}{l}
\begin{array}{llll}
(u(t),e_{k})+\varepsilon(\nabla u(t),\nabla e_{k})
\\~~~~~~~~~~~=(u_{0},e_{k})+\varepsilon(\nabla u_{0},\nabla e_{k})-\int_{0}^{t}(\nabla u(s),\nabla e_{k})ds+\int_{0}^{t}(f(s),e_{k})ds+\int_{0}^{t}(g(s),e_{k})dB(s)
\end{array}
\end{array}
\end{equation*}
for all $k=1,2,3\cdots$, and all $t\in [0,T]$, for almost all $\omega\in \Omega$.
\par
Thus, it holds that
\begin{equation*}
\begin{array}{l}
\begin{array}{llll}
(u(t),\varphi)+\varepsilon(\nabla u(t),\nabla \varphi)
\\~~~~~~~~~~~=(u_{0},\varphi)+\varepsilon(\nabla u_{0},\nabla\varphi)+\int_{0}^{t}(\nabla u(s),\nabla\varphi)ds+\int_{0}^{t}(f(s),\varphi)ds+\int_{0}^{t}(g(s),\varphi)dB(s)
\end{array}
\end{array}
\end{equation*}
holds for all $t\in [0,T]$ and all $\varphi\in H^{1}_{0}(G)$, for almost all $\omega\in \Omega.$
\par
By taking the limit in (\ref{5}) as $m$ goes to infinity, we can obtain (\ref{29}).
\par
3) We have
\begin{equation*}
\begin{array}{l}
\begin{array}{llll}
\|\nabla u^{m}(t)\|_{L^{2}(G)}^{2}+\varepsilon\|\triangle u^{m}(t)\|_{L^{2}(G)}^{2}=\sum_{k=1}^{m}(\lambda_{k}+\varepsilon \lambda_{k}^{2})c_{k}^{2}(t)=\sum_{k=1}^{m}\lambda_{k}h_{k}.
\end{array}
\end{array}
\end{equation*}
\par
Multiplying (\ref{28}) by $(1+\varepsilon\lambda_{k})\lambda_{k}$, we have
\begin{equation}\label{6}
\begin{array}{l}
\begin{array}{llll}
\mathbb{E} \sup\limits_{0\leq t\leq T}(\lambda_{k}h_{k}(t))+\mathbb{E} \int_{0}^{T}\lambda_{k}^{2}c_{k}^{2}dt
\leq C(\mathbb{E}(\lambda_{k}h_{k}(0))+\mathbb{E} \int_{0}^{T}f_{k}^{2}dt+\mathbb{E} \int_{0}^{T}\lambda_{k}g_{k}^{2}dt).
\end{array}
\end{array}
\end{equation}
Taking the sum on $k$ in (\ref{6}), we get
\begin{equation*}
\begin{array}{l}
\begin{array}{llll}
\mathbb{E} \sup\limits_{0\leq t\leq T}(\|\nabla u^{m}(t)\|_{L^{2}(G)}^{2}+\varepsilon\|\triangle u^{m}(t)\|_{L^{2}(G)}^{2})+\mathbb{E} \int_{0}^{T}\|\triangle u^{m}(t)\|_{L^{2}(G)}^{2}dt
\\\leq C[\mathbb{E}(\|\nabla u_{0}\|_{L^{2}(G)}^{2}+\varepsilon\|\triangle u_{0}\|_{L^{2}(G)}^{2})+\mathbb{E} \int_{0}^{T}\|f^{m}(t)\|_{L^{2}(G)}^{2}dt+\mathbb{E} \int_{0}^{T}\|g^{m}(t)\|_{H^{1}(G)}^{2}dt],
\end{array}
\end{array}
\end{equation*}
thus,
\begin{equation}\label{7}
\begin{array}{l}
\begin{array}{llll}
\mathbb{E} \sup\limits_{0\leq t\leq T}(\|\nabla (u^{m}-u^{n})(t)\|_{L^{2}(G)}^{2}+\varepsilon\|\triangle (u^{m}-u^{n})(t)\|_{L^{2}(G)}^{2})
\\+\mathbb{E} \int_{0}^{T}\|\triangle (u^{m}-u^{n})(t)\|_{L^{2}(G)}^{2}dt
\\\leq C[\mathbb{E}(\|\nabla u_{0m}-\nabla u_{0n}\|_{L^{2}(G)}^{2}+\varepsilon\|\triangle u_{0m}-\triangle u_{0n}\|_{L^{2}(G)}^{2})
\\+\mathbb{E} \int_{0}^{T}\|(f^{m}-f^{n})(t)\|_{L^{2}(G)}^{2}dt+\mathbb{E} \int_{0}^{T}\|(g^{m}-g^{n})(t)\|_{H^{1}(G)}^{2}dt].
\end{array}
\end{array}
\end{equation}
where $C$ denotes a positive constant independent of $n,m$ and $T.$ Next we observe that
the right-hand side of (\ref{7}) converges to zero as $n,m\rightarrow \infty$. Hence, it follows that
$\{u^{m}\}_{m=1}^{+\infty}$ is a Cauchy sequence that converges strongly in $L^{2}(\Omega,C([0,T];H^{2}(G)))\cap L^{2}(\Omega,L^{2}(0,T;H^{2}(G)))$. Let
$u$ be the limit.
\par
By the same argument as in 1) and 2), $u$ is the solution of (\ref{1}).
\end{proof}

\section{Proof of Theorem \ref{Th3}}
\par
If there is no danger of confusion, we shall omit the subscript $\varepsilon,$ we use $\overline{u}_{n}$ instead of $\overline{u}_{n}^{\varepsilon}$ and $\overline{v}_{n}$ instead of $\overline{v}_{n}^{\varepsilon}.$
\par
The proof of the existence of the weak martingale solution is divided into several steps.
\par \textbf{Step 1. Construct the approximate solution.}

\par
Let $\{(\overline{\Omega},\overline{\mathcal{F}},\overline{\mathbb{P}}),(\overline{\mathcal{F}}_{t})_{0\leq t\leq T},\overline{B}\}$ be a fixed stochastic basis and $\{e_{n}:n=1,2,3\cdots\}$ be an orthonormal basis of $L^{2}(I)$ which was obtained in Lemma \ref{L6}. Set $H_{n}=Span\{e_{1},e_{2},...,e_{n}\}$ and let $P_{n}$ be the $L^{2}-$orthogonal projection from $L^{2}(I)$ onto $H_{n}.$
\par
We set
\begin{equation*}
\begin{array}{l}
\begin{array}{llll}
\overline{u}_{n}(t)=\sum_{k=1}^{n}c^{n}_{k}(t)e_{k}
\end{array}
\end{array}
\end{equation*}
and it is the solution of the following system of stochastic differential equations
\begin{eqnarray*}
\begin{array}{l}
\left\{
\begin{array}{llll}
d(\overline{u}_{n}-\varepsilon\overline{u}_{nxx})+(-\overline{u}_{nxx}+P_{n}\overline{u}_{n}^{3}-\overline{u}_{n})dt
=P_{n}g(\overline{u}_{n})d\overline{B}
\\\overline{u}_{n}(0,t)=0=\overline{u}_{n}(1,t),
\\\overline{u}_{n}(x,0)=P_{n}u_{0}\triangleq u_{n0}(x)
\end{array}
\right.
\end{array}
\begin{array}{lll}
{\rm{in}}~Q\\
{\rm{in}}~(0,T)\\
{\rm{in}}~I
\end{array}
\end{eqnarray*}
defined on $\{(\overline{\Omega},\overline{\mathcal{F}},\overline{\mathbb{P}}),(\overline{\mathcal{F}}_{t})_{0\leq t\leq T},\overline{B}\}.$ The mathematical expectation with respect to $\overline{\mathbb{P}}$ is denoted by $\overline{\mathbb{E}}.$
\par
It is easy to see that $c^{n}_{k}$ satisfies the following system of stochastic differential equations
\begin{eqnarray}\label{42}
\begin{array}{l}
\left\{
\begin{array}{llll}
dc^{n}_{k}+\frac{1}{1+\varepsilon \lambda_{k}}(\lambda_{k}c^{n}_{k}+(P_{n}\overline{u}_{n}^{3},e_{k})-c^{n}_{k})dt
=\frac{1}{1+\varepsilon \lambda_{k}}(P_{n}g(\overline{u}_{n}),e_{k})d\overline{B}
\\c^{n}_{k}(0)=(u_{0},e_{k}).
\end{array}
\right.
\end{array}
\end{eqnarray}
\par
By the theory of stochastic differential equations, there is a local $\overline{u}_{n}$ defined on $[0,T_{n}].$ The following a priori estimates will enable us to prove that $T_{n}=T.$

\par \textbf{Step 2. A priori estimates.}

\begin{lemma}
There exists a positive constant $C$ independent of $\varepsilon$ such that
\begin{eqnarray}\label{43}
\begin{array}{l}
\begin{array}{llll}
\overline{\mathbb{E}} \sup\limits_{0\leq t\leq T}(\|\overline{u}_{n}(t)\|_{L^{2}(I)}^{2}+\varepsilon\|\overline{u}_{nx}(t)\|_{L^{2}(I)}^{2})
+\overline{\mathbb{E}}\int_{0}^{T}(\|\overline{u}_{nx}(t)\|_{L^{2}(I)}^{2}+\|\overline{u}_{n}(t)\|_{L^{4}(I)}^{4})dt\leq C
\end{array}
\end{array}
\end{eqnarray}
for any $n\geq1.$
\end{lemma}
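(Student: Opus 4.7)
The plan is to run an energy estimate on the natural quantity for the nonclassical equation, namely
$$E_n(t) := \|\overline{u}_n(t)\|_{L^2(I)}^2 + \varepsilon\|\overline{u}_{nx}(t)\|_{L^2(I)}^2 = \sum_{k=1}^n (1+\varepsilon\lambda_k)(c_k^n(t))^2.$$
Because of the ``BBM'' term $-\varepsilon u_{xx}$ in the time derivative, one cannot apply It\^o's formula to $\|\overline{u}_n\|_{L^2}^2$ alone; instead I work with $E_n(t)$, which behaves like the square of the norm on the Hilbert space associated with the operator $I-\varepsilon\Delta$. Concretely, I apply the one-dimensional It\^o formula to $(c_k^n)^2$ using the SDE (\ref{42}), multiply by $(1+\varepsilon\lambda_k)$, and sum in $k$. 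The drift produces the terms $-2\|\overline{u}_{nx}\|_{L^2}^2-2(P_n\overline{u}_n^3,\overline{u}_n)+2\|\overline{u}_n\|_{L^2}^2$, the stochastic integral produces $2(P_n g(\overline{u}_n),\overline{u}_n)\,d\overline{B}$, and the It\^o correction produces $\sum_k (1+\varepsilon\lambda_k)^{-1}|(P_n g(\overline{u}_n),e_k)|^2\,dt \le \|g(\overline{u}_n)\|_{L^2}^2\,dt$.

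The two key observations that make the estimate close are: (i) $(P_n\overline{u}_n^3,\overline{u}_n) = (\overline{u}_n^3,\overline{u}_n) = \|\overline{u}_n\|_{L^4}^4$ since $\overline{u}_n\in H_n$, so the cubic nonlinearity contributes a positive dissipative term $2\|\overline{u}_n\|_{L^4}^4$ that can be carried to the left-hand side; and (ii) the It\^o correction is controlled by the linear growth assumption (A) via $\|g(\overline{u}_n)\|_{L^2}^2 \le 2L^2(1+\|\overline{u}_n\|_{L^2}^2)$. Collecting everything gives an integral inequality of the form
$$E_n(t) + 2\int_0^t\bigl(\|\overline{u}_{nx}\|_{L^2}^2+\|\overline{u}_n\|_{L^4}^4\bigr)\,ds \le E_n(0) + C\int_0^t\bigl(1+\|\overline{u}_n\|_{L^2}^2\bigr)\,ds + 2\int_0^t (P_n g(\overline{u}_n),\overline{u}_n)\,d\overline{B}.$$
The initial term is $E_n(0)=\|P_nu_0\|_{L^2}^2+\varepsilon\|P_nu_0\|_{H^1_0}^2 \le (1+\varepsilon)\|u_0\|_{H^1_0}^2$, which, because $\varepsilon\le 1/2$, is bounded independently of $\varepsilon$ and $n$. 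This is exactly the place where the hypothesis $u_0\in H^1_0(I)$ (rather than just $L^2$) is essential for uniformity in $\varepsilon$.

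Next I take $\sup_{0\le s\le t}$ and expectation. The main obstacle is the stochastic integral: its supremum is handled by the Burkholder--Davis--Gundy inequality, which gives
$$\overline{\mathbb{E}}\sup_{0\le s\le t}\Bigl|\int_0^s(P_ng(\overline{u}_n),\overline{u}_n)\,dB\Bigr| \le C\,\overline{\mathbb{E}}\Bigl(\int_0^t \|\overline{u}_n\|_{L^2}^2\,\|g(\overline{u}_n)\|_{L^2}^2\,ds\Bigr)^{1/2},$$
which by Cauchy--Schwarz (or Young's inequality) is bounded by $\tfrac{1}{2}\overline{\mathbb{E}}\sup_{0\le s\le t}\|\overline{u}_n(s)\|_{L^2}^2 + C\overline{\mathbb{E}}\int_0^t(1+\|\overline{u}_n\|_{L^2}^2)\,ds$. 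The first piece is absorbed into the left-hand side, leaving a Gronwall-type inequality for $\overline{\mathbb{E}}\sup_{[0,t]}E_n$. Gronwall's lemma then yields $\overline{\mathbb{E}}\sup_{[0,T]}E_n(t)\le C$ with $C$ independent of $n$ and $\varepsilon$, and feeding this back into the integral inequality controls the $L^2(0,T;H^1)\cap L^4(0,T;L^4)$ norms as well.

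The delicate technical point is twofold: one must make sure the constant obtained after BDG and Gronwall does \emph{not} depend on $\varepsilon$, which requires using the bound $\varepsilon\le 1/2$ on $E_n(0)$ and observing that the It\^o correction estimate $\sum_k (1+\varepsilon\lambda_k)^{-1}|(P_n g,e_k)|^2 \le \|g\|_{L^2}^2$ is itself $\varepsilon$-uniform; and one must note that the argument automatically rules out blow-up of $\overline{u}_n$, so that the local solution on $[0,T_n]$ obtained from the Picard theory for (\ref{42}) actually extends to $[0,T]$. A standard stopping-time truncation $\tau_N=\inf\{t:\|\overline{u}_n(t)\|_{L^2}\ge N\}\wedge T$ is used to rigorously justify taking expectation before the a priori bound is known; after the bound is established we let $N\to\infty$ to conclude $\tau_N=T$ a.s.\ and recover (\ref{43}).
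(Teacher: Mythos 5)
Your proposal is correct and follows essentially the same route as the paper: It\^o's formula applied to $(c_k^n)^2$, multiplication by $(1+\varepsilon\lambda_k)$ and summation to get the energy identity for $\|\overline{u}_n\|_{L^2}^2+\varepsilon\|\overline{u}_{nx}\|_{L^2}^2$, the identification $(P_n\overline{u}_n^3,\overline{u}_n)=\|\overline{u}_n\|_{L^4}^4$, the $\varepsilon$-uniform bound on the It\^o correction, Burkholder--Davis--Gundy with absorption, and Gronwall. The only addition is your explicit stopping-time truncation to justify taking expectations, which the paper leaves implicit but which is a sound (and slightly more careful) way to handle the local-in-time Galerkin solution.
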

\begin{proof}
\par
Indeed, it follows from It\^{o}'s rule that
\begin{equation*}
\begin{array}{l}
\begin{array}{llll}
dc_{k}^{n2}=2c_{k}^{n}dc_{k}^{n}+(dc_{k}^{n})^{2}
\\~~~~=2c_{k}^{n}\frac{1}{1+\varepsilon\lambda_{k}}[(-\lambda_{k}c^{n}_{k}-(P_{n}\overline{u}_{n}^{3},e_{k})+c^{n}_{k})dt+(P_{n}g(\overline{u}_{n}),e_{k})d\overline{B}]+\frac{1}{(1+\varepsilon\lambda_{k})^{2}}|(P_{n}g(\overline{u}_{n}),e_{k})|^{2}dt
,
\end{array}
\end{array}
\end{equation*}
namely, we have
\begin{equation}\label{32}
\begin{array}{l}
\begin{array}{llll}
(1+\varepsilon\lambda_{k})dc_{k}^{n2}
\\~~~~=[(-2\lambda_{k}c^{n2}_{k}-2(P_{n}\overline{u}_{n}^{3},c_{k}^{n}e_{k})+2c^{n2}_{k})dt+2(P_{n}g(\overline{u}_{n}),c_{k}^{n}e_{k})d\overline{B}]+\frac{1}{1+\varepsilon\lambda_{k}}|(P_{n}g(\overline{u}_{n}),e_{k})|^{2}dt
.
\end{array}
\end{array}
\end{equation}
Taking the sum on $k$ in (\ref{32}), following \cite[P28]{K2} or \cite{P2}, we get
\begin{eqnarray}\label{40}
\begin{array}{l}
\begin{array}{llll}
d(\|\overline{u}_{n}(t)\|_{L^{2}(I)}^{2}+\varepsilon\|\overline{u}_{nx}(t)\|_{L^{2}(I)}^{2})+2(\|\overline{u}_{nx}\|_{L^{2}(I)}^{2}+\|\overline{u}_{n}\|_{L^{4}(I)}^{4})dt
\\=(2\|\overline{u}_{n}\|_{L^{2}(I)}^{2}
+\sum_{k=1}^{n}\frac{1}{1+\varepsilon\lambda_{k}}|(P_{n}g(\overline{u}_{n}),e_{k})|^{2})dt
+2(\overline{u}_{n},P_{n}g(\overline{u}_{n}))d\overline{B},
\end{array}
\end{array}
\end{eqnarray}
namely,
\begin{eqnarray}\label{36}
\begin{array}{l}
\begin{array}{llll}
(\|\overline{u}_{n}(t)\|_{L^{2}(I)}^{2}+\varepsilon\|\overline{u}_{nx}(t)\|_{L^{2}(I)}^{2})+2\int_{0}^{t}(\|\overline{u}_{nx}\|_{L^{2}(I)}^{2}+\|\overline{u}_{n}\|_{L^{4}(I)}^{4})ds
\\=\|u_{n0}\|_{L^{2}(I)}^{2}+\varepsilon\|u_{n0x}\|_{L^{2}(I)}^{2}+\int_{0}^{t}\left(2\|\overline{u}_{n}\|_{L^{2}(I)}^{2}
+\sum_{k=1}^{n}\frac{1}{1+\varepsilon\lambda_{k}}|(P_{n}g(\overline{u}_{n}),e_{k})|^{2}\right)ds
\\+2\int_{0}^{t}(\overline{u}_{n},P_{n}g(\overline{u}_{n}))d\overline{B}.
\end{array}
\end{array}
\end{eqnarray}
It is easy to see
\begin{equation*}
\begin{array}{l}
\begin{array}{llll}
\overline{\mathbb{E}} \left|\int_{0}^{t}\sum_{k=1}^{n}\frac{1}{1+\varepsilon\lambda_{k}}|(P_{n}g(\overline{u}_{n}),e_{k})|^{2}ds\right|
\\\leq\overline{\mathbb{E}} \left|\int_{0}^{t}\sum_{k=1}^{n}|(P_{n}g(\overline{u}_{n}),e_{k})|^{2}ds\right|
\\\leq\overline{\mathbb{E}} \left|\int_{0}^{t}\|P_{n}g(\overline{u}_{n})\|_{L^{2}(I)}^{2}ds\right|
\\\leq C \overline{\mathbb{E}}\int_{0}^{t}\left(1+\|\overline{u}_{n}(s)\|_{L^{2}(I)}^{2}\right)ds.
\end{array}
\end{array}
\end{equation*}
\par

By the Burkholder-Davis-Gundy inequality and Cauchy inequality, we can obtain that for any $\delta>0$,
\begin{equation*}
\begin{array}{l}
\begin{array}{llll}
~~~\displaystyle \overline{\mathbb{E}} \sup\limits_{0\leq s\leq t}\left|\int_{0}^{s}(\overline{u}_{n},P_{n}g(\overline{u}_{n}))d\overline{B}\right|
\\ \displaystyle=\overline{E} \sup\limits_{0\leq s\leq t}\left|\int_{0}^{s}(P_{n}\overline{u}_{n},g(\overline{u}_{n}))d\overline{B}\right|
\\ \displaystyle=\overline{E} \sup\limits_{0\leq s\leq t}\left|\int_{0}^{s}(\overline{u}_{n},g(\overline{u}_{n}))d\overline{B}\right|
\\ \displaystyle\leq \delta \overline{\mathbb{E}} \sup\limits_{0\leq s\leq t} \|\overline{u}_{n}(s)\|_{L^{2}(I)}^{2}+C(\delta) \overline{\mathbb{E}}\int_{0}^{t}\|g(\overline{u}_{n})(s))\|_{L^{2}(I)}^{2}ds

\\ \displaystyle\leq \delta \overline{\mathbb{E}} \sup\limits_{0\leq s\leq t} \|\overline{u}_{n}(s)\|_{L^{2}(I)}^{2}+C(\delta) \overline{\mathbb{E}}\int_{0}^{t}\left(1+\|\overline{u}_{n}(s)\|_{L^{2}(I)}^{2}\right)ds.
\end{array}
\end{array}
\end{equation*}
It follows from (\ref{36}) that
\begin{equation*}
\begin{array}{l}
\begin{array}{llll}
\overline{\mathbb{E}} \sup\limits_{0\leq s\leq t} (\|\overline{u}_{n}(s)\|_{L^{2}(I)}^{2}+\varepsilon\|\overline{u}_{nx}(s)\|_{L^{2}(I)}^{2})+2\overline{\mathbb{E}}\int_{0}^{t}(\|\overline{u}_{nx}\|_{L^{2}(I)}^{2}
+\|\overline{u}_{n}\|_{L^{4}(I)}^{4})ds
\\\leq \displaystyle\delta \overline{\mathbb{E}} \sup\limits_{0\leq s\leq t} \|\overline{u}_{n}(s)\|_{L^{2}(I)}^{2}+C\left(\overline{\mathbb{E}}\|u_{n0}\|_{H^{1}(I)}^{2}+\overline{\mathbb{E}}\int_{0}^{t}\left(1+\|\overline{u}_{n}(s)\|_{L^{2}(I)}^{2}\right)ds\right).
\end{array}
\end{array}
\end{equation*}
By choosing $\delta>0$ small enough, yields
\begin{equation*}
\begin{array}{l}
\begin{array}{llll}
\displaystyle \overline{\mathbb{E}} \sup\limits_{0\leq s\leq t} (\|\overline{u}_{n}(s)\|_{L^{2}(I)}^{2}+\varepsilon\|\overline{u}_{nx}(s)\|_{L^{2}(I)}^{2})
+\overline{\mathbb{E}}\int_{0}^{t}(\|\overline{u}_{nx}\|_{L^{2}(I)}^{2}
+\|\overline{u}_{n}\|_{L^{4}(I)}^{4})ds
\\\leq \displaystyle C\left(\overline{\mathbb{E}}\|u_{n0}\|_{H^{1}(I)}^{2}+\overline{\mathbb{E}}\int_{0}^{t}\left(1+\|\overline{u}_{n}(s)\|_{L^{2}(I)}^{2}\right)ds\right).
\end{array}
\end{array}
\end{equation*}
According to Gronwall's lemma, we obtain that
\begin{equation*}
\begin{array}{l}
\begin{array}{llll}
\displaystyle \overline{\mathbb{E}} \sup\limits_{0\leq t\leq T} (\|\overline{u}_{n}(s)\|_{L^{2}(I)}^{2}+\varepsilon\|\overline{u}_{nx}(s)\|_{L^{2}(I)}^{2})\leq C,
\\ \displaystyle \overline{\mathbb{E}}\int_{0}^{T}(\|\overline{u}_{nx}\|_{L^{2}(I)}^{2}+\|\overline{u}_{n}\|_{L^{4}(I)}^{4})dt\leq C.
\end{array}
\end{array}
\end{equation*}
\end{proof}
\par
The following result is related to the higher integrability of $\overline{u}_{n}.$
\begin{lemma}\label{L2}
For any $1\leq p <\infty,$ there exists a constant $C_{p}$ independent of $\varepsilon$ such that
\begin{eqnarray}
\label{44}
&&\overline{\mathbb{E}} \sup\limits_{0\leq t\leq T}(\|\overline{u}_{n}(s)\|_{L^{2}(I)}^{2}+\varepsilon\|\overline{u}_{nx}(s)\|_{L^{2}(I)}^{2})^{\frac{p}{2}}\leq C_{p},
\\ \label{45} &&\overline{\mathbb{E}}\left(\int_{0}^{T}(\|\overline{u}_{nx}\|_{L^{2}(I)}^{2}+\|\overline{u}_{n}\|_{L^{4}(I)}^{4})dt\right)^{\frac{p}{2}}\leq C_{p}
\end{eqnarray}
for any $n\geq1.$
\end{lemma}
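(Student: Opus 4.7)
The plan is to upgrade the $p=2$ estimate of the previous lemma to arbitrary $p\in[1,\infty)$ by applying It\^{o}'s formula to a suitable power of the energy, then using the Burkholder--Davis--Gundy (BDG) inequality together with Gronwall's lemma to close the estimates. By Jensen's inequality it is enough to handle $p\geq 2$, so from now on assume $p\geq 2$.

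Set $X(t):=\|\overline{u}_{n}(t)\|_{L^{2}(I)}^{2}+\varepsilon\|\overline{u}_{nx}(t)\|_{L^{2}(I)}^{2}$. Starting from the energy identity (\ref{40}) derived in the previous lemma, the first step is to apply It\^{o}'s formula to $X^{p/2}$. The resulting equation will contain the dissipation $pX^{p/2-1}\bigl(\|\overline{u}_{nx}\|_{L^{2}(I)}^{2}+\|\overline{u}_{n}\|_{L^{4}(I)}^{4}\bigr)$ on the left-hand side, the drift contributions $\frac{p}{2}X^{p/2-1}\bigl(2\|\overline{u}_{n}\|_{L^{2}(I)}^{2}+\sum_{k}\frac{1}{1+\varepsilon\lambda_{k}}|(P_{n}g(\overline{u}_{n}),e_{k})|^{2}\bigr)$ and the It\^{o} correction $\frac{p(p-2)}{2}X^{p/2-2}|(\overline{u}_{n},P_{n}g(\overline{u}_{n}))|^{2}$ on the right-hand side, together with the martingale $pX^{p/2-1}(\overline{u}_{n},P_{n}g(\overline{u}_{n}))\,d\overline{B}$. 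The second step is to control each right-hand side term by a constant multiple of $1+X^{p/2}$: using assumption (A) one has $\sum_{k}\frac{1}{1+\varepsilon\lambda_{k}}|(P_{n}g(\overline{u}_{n}),e_{k})|^{2}\leq\|g(\overline{u}_{n})\|_{L^{2}(I)}^{2}\leq C(1+X)$ and $|(\overline{u}_{n},P_{n}g(\overline{u}_{n}))|^{2}\leq X\cdot C(1+X)\leq C(X+X^{2})$, so the deterministic right-hand side is dominated by $C(1+X^{p/2})$.

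The third step is to take the supremum over $[0,t]$ and expectation, and then apply BDG to the martingale:
\begin{equation*}
\overline{\mathbb{E}}\sup_{0\leq s\leq t}\left|\int_{0}^{s}pX^{p/2-1}(\overline{u}_{n},P_{n}g(\overline{u}_{n}))\,d\overline{B}\right|\leq C\overline{\mathbb{E}}\left(\int_{0}^{t}X^{p-2}\cdot X(1+X)\,ds\right)^{1/2},
\end{equation*}
which, after pulling $\sup_{s}X^{p/2}(s)$ out of the integrand and using Cauchy with weight $\delta$, is bounded by $\delta\,\overline{\mathbb{E}}\sup_{s\leq t}X^{p/2}(s)+C(\delta)\overline{\mathbb{E}}\int_{0}^{t}(1+X^{p/2}(s))\,ds$. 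Choosing $\delta$ small absorbs the $\sup$ term into the left-hand side, and Gronwall's lemma (applied to the map $t\mapsto\overline{\mathbb{E}}\sup_{s\leq t}X^{p/2}(s)$) yields the first estimate (\ref{44}), with $X(0)^{p/2}\leq\|u_{0}\|_{H^{1}(I)}^{p}$ providing the initial bound independent of $n$ and $\varepsilon$.

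For the dissipation estimate (\ref{45}), the plan is to rearrange (\ref{36}) as
\begin{equation*}
2\int_{0}^{T}\bigl(\|\overline{u}_{nx}\|_{L^{2}(I)}^{2}+\|\overline{u}_{n}\|_{L^{4}(I)}^{4}\bigr)\,ds\leq X(0)+\int_{0}^{T}\bigl(2\|\overline{u}_{n}\|_{L^{2}(I)}^{2}+\textstyle\sum_{k}\cdots\bigr)\,ds+2\left|\int_{0}^{T}(\overline{u}_{n},P_{n}g(\overline{u}_{n}))\,d\overline{B}\right|,
\end{equation*}
raise both sides to the power $p/2$, and take expectation. The deterministic terms are controlled by (\ref{44}) just proved, and the stochastic term is handled by BDG, which reduces it to $C\overline{\mathbb{E}}\bigl(\int_{0}^{T}X(1+X)\,ds\bigr)^{p/4}$, again controlled by (\ref{44}). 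The main technical obstacle is the It\^{o} correction containing $X^{p/2-2}$, which a priori degenerates if $X=0$; the standard fix is to apply It\^{o} to $(X+\eta)^{p/2}$ for $\eta>0$ and send $\eta\to 0$ using monotone convergence, which causes no loss in the estimates since all bounds are uniform in $\eta$.
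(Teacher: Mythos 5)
Your proposal is correct and follows essentially the same route as the paper: the same energy functional $\phi_{n}=\|\overline{u}_{n}\|_{L^{2}(I)}^{2}+\varepsilon\|\overline{u}_{nx}\|_{L^{2}(I)}^{2}$, It\^{o}'s formula applied to $\phi_{n}^{p/2}$ starting from (\ref{40}), control of the drift and It\^{o} correction by $C(1+\phi_{n}^{p/2})$ via assumption (A), BDG plus absorption and Gronwall for (\ref{44}), and then raising the rearranged identity (\ref{36}) to the power $p/2$ for (\ref{45}), with the case $1\leq p<2$ obtained by Jensen/Young. Your added regularization $(X+\eta)^{p/2}$ to avoid the degeneracy of $X^{p/2-2}$ at $X=0$ is a small refinement the paper passes over silently, but otherwise the two arguments coincide.
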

\begin{proof}
Case I: $2\leq p <\infty.$
\par
To simplify the notation, we define
\begin{equation*}
\begin{array}{l}
\begin{array}{llll}
\phi_{n}=\|\overline{u}_{n}(t)\|_{L^{2}(I)}^{2}+\varepsilon\|\overline{u}_{nx}(t)\|_{L^{2}(I)}^{2},\\
K=(2\|\overline{u}_{n}\|_{L^{2}(I)}^{2}
+\sum_{k=1}^{n}\frac{1}{1+\varepsilon\lambda_{k}}|(P_{n}g(t,\overline{u}_{n}),e_{k})|^{2})-2(\|\overline{u}_{nx}\|_{L^{2}(I)}^{2}+\|\overline{u}_{n}\|_{L^{4}(I)}^{4}),
\\L=2(\overline{u}_{n},P_{n}g(t,\overline{u}_{n})).
\end{array}
\end{array}
\end{equation*}
Thus we can rewrite (\ref{40}) as
\begin{equation*}
d\phi_{n}=Kdt+Ld\overline{B}.
\end{equation*}
By It\^{o}'s rule, we obtain that
\begin{equation*}
d\phi_{n}^{\frac{p}{2}}=\frac{p}{2}\phi_{n}^{\frac{p-2}{2}}\left((K+\frac{p-2}{4}\phi_{n}^{-1}L^{2})dt+Ld\overline{B}\right),
\end{equation*}
for any $2\leq p <\infty.$ Namely, we have
\begin{equation}\label{39}
\phi_{n}^{\frac{p}{2}}(t)=\phi_{n}^{\frac{p}{2}}(0)+\int_{0}^{t}\frac{p}{2}\phi_{n}^{\frac{p-2}{2}}(K+\frac{p-2}{4}\phi_{n}^{-1}L^{2})ds
+\int_{0}^{t}\frac{p}{2}\phi_{n}^{\frac{p-2}{2}}Ld\overline{B}.
\end{equation}
\par
Using the properties of $g$ and Young's inequality, we have
\begin{equation*}
\begin{array}{l}
\begin{array}{llll}
\phi_{n}^{\frac{p-2}{2}}K
\\\leq \phi_{n}^{\frac{p-2}{2}}(2\|\overline{u}_{n}\|_{L^{2}(I)}^{2}
+\sum_{k=1}^{n}\frac{1}{1+\varepsilon\lambda_{k}}|(P_{n}g(\overline{u}_{n}),e_{k})|^{2}-2(\|\overline{u}_{nx}\|_{L^{2}(I)}^{2}+\|\overline{u}_{n}\|_{L^{4}(I)}^{4})
)
\\\leq \phi_{n}^{\frac{p-2}{2}}(2\|\overline{u}_{n}\|_{L^{2}(I)}^{2}
+\sum_{k=1}^{n}\frac{1}{1+\varepsilon\lambda_{k}}|(P_{n}g(\overline{u}_{n}),e_{k})|^{2})
\\\leq \phi_{n}^{\frac{p-2}{2}}(2\|\overline{u}_{n}\|_{L^{2}(I)}^{2}
+\|P_{n}g(\overline{u}_{n})\|_{L^{2}(I)}^{2})
\\\leq C\phi_{n}^{\frac{p-2}{2}}(1+\|\overline{u}_{n}\|_{L^{2}(I)}^{2}
)

\\\leq C(1+\phi_{n}^{\frac{p}{2}}),
\\

\\
\phi_{n}^{\frac{p-2}{2}}\phi_{n}^{-1}L^{2}
\\=C\phi_{n}^{\frac{p-4}{2}}(\overline{u}_{n},P_{n}g(\overline{u}_{n}))^{2}
\\\leq C\phi_{n}^{\frac{p-4}{2}}\|\overline{u}_{n}\|_{L^{2}(I)}^{2}\|P_{n}g(\overline{u}_{n})\|_{L^{2}(I)}^{2}
\\\leq C\phi_{n}^{\frac{p-4}{2}}\|\overline{u}_{n}\|_{L^{2}(I)}^{2}(1+\|\overline{u}_{n}\|_{L^{2}(I)}^{2})
\\\leq C(1+\phi_{n}^{\frac{p}{2}}).

\end{array}
\end{array}
\end{equation*}
According to the Burkholder-Davis-Gundy inequality and Young's inequality, it can be deduced that
\begin{equation*}
\begin{array}{l}
\begin{array}{llll}
~~~\overline{\mathbb{E}}\sup\limits_{0\leq s\leq t}\left|\int_{0}^{s}\phi_{n}^{\frac{p-2}{2}}Ld\overline{B}\right|
\\=2\overline{\mathbb{E}}\sup\limits_{0\leq s\leq t}\left|\int_{0}^{s}\phi_{n}^{\frac{p-2}{2}}(\overline{u}_{n},P_{n}g(\overline{u}_{n}))d\overline{B}\right|
\\\leq C\overline{\mathbb{E}}\left(\int_{0}^{t}\phi_{n}^{p-2}(\overline{u}_{n},P_{n}g(\overline{u}_{n}))^{2}ds\right)^{\frac{1}{2}}
\\\leq C\overline{\mathbb{E}}\left(\int_{0}^{t}\phi_{n}^{p-2}\|\overline{u}_{n}\|_{L^{2}(I)}^{2}\|P_{n}g(\overline{u}_{n})\|_{L^{2}(I)}^{2}ds\right)^{\frac{1}{2}}
\\\leq C\overline{\mathbb{E}}\left(\int_{0}^{t}\phi_{n}^{p-2}\|\overline{u}_{n}\|_{L^{2}(I)}^{2}\|g(\overline{u}_{n})\|_{L^{2}(I)}^{2}ds\right)^{\frac{1}{2}}
\\\leq C\overline{\mathbb{E}}\left(\int_{0}^{t}\phi_{n}^{p-2}\|\overline{u}_{n}\|_{L^{2}(I)}^{2}(1+\|\overline{u}_{n}\|_{L^{2}(I)}^{2})ds\right)^{\frac{1}{2}}
\\\leq C\overline{\mathbb{E}}\left(\int_{0}^{t}(1+\phi_{n}^{p})ds\right)^{\frac{1}{2}}
\\\leq C+C\overline{\mathbb{E}}\left(\int_{0}^{t}\phi_{n}^{p}ds\right)^{\frac{1}{2}}
\\\leq C+C\overline{\mathbb{E}}\left(\int_{0}^{t}\phi_{n}^{\frac{p}{2}}\phi_{n}^{\frac{p}{2}}ds\right)^{\frac{1}{2}}
\\\leq C+C\overline{\mathbb{E}}\left(\sup\limits_{0\leq s\leq t}\phi_{n}^{\frac{p}{2}}\int_{0}^{t}\phi_{n}^{\frac{p}{2}}ds\right)^{\frac{1}{2}}
\\\leq C +\delta \overline{\mathbb{E}}\sup\limits_{0\leq s\leq t}\phi_{n}^{\frac{p}{2}}+C\overline{\mathbb{E}}\int_{0}^{t}\phi_{n}^{\frac{p}{2}}ds.
\end{array}
\end{array}
\end{equation*}
From the above estimates and (\ref{39}), by choosing $\delta>0$ small enough, it holds that
\begin{equation*}
\begin{array}{l}
\begin{array}{llll}
\displaystyle \overline{\mathbb{E}} \sup\limits_{0\leq s\leq t}\phi_{n}^{\frac{p}{2}}\leq C+C\overline{\mathbb{E}}\int_{0}^{t}\phi_{n}^{\frac{p}{2}}ds.
\end{array}
\end{array}
\end{equation*}
According to Gronwall's lemma and the definition of $\phi_{n}$, we obtain that
\begin{equation}
\label{37}\displaystyle \overline{\mathbb{E}} \sup\limits_{0\leq t\leq T}(\|\overline{u}_{n}(s)\|_{L^{2}(I)}^{2}+\varepsilon\|\overline{u}_{nx}(s)\|_{L^{2}(I)}^{2})^{\frac{p}{2}}\leq C_{p}.
\end{equation}

In view of (\ref{36}), there holds
\begin{equation*}
\begin{array}{l}
\begin{array}{llll}
(\|\overline{u}_{n}(t)\|_{L^{2}(I)}^{2}+\varepsilon\|\overline{u}_{nx}(t)\|_{L^{2}(I)}^{2})+2\int_{0}^{t}(\|\overline{u}_{nx}\|_{L^{2}(I)}^{2}+\|\overline{u}_{n}\|_{L^{4}(I)}^{4})ds
\\=\|u_{n0}\|_{L^{2}(I)}^{2}+\varepsilon\|u_{n0x}\|_{L^{2}(I)}^{2}+\int_{0}^{t}\left(2\|\overline{u}_{n}\|_{L^{2}(I)}^{2}
+\sum_{k=1}^{n}\frac{1}{1+\varepsilon\lambda_{k}}|(P_{n}g(\overline{u}_{n}),e_{k})|^{2}\right)ds
\\+2\int_{0}^{t}(\overline{u}_{n},P_{n}g(\overline{u}_{n}))d\overline{B}
\\\leq \|u_{n0}\|_{H^{1}(I)}^{2}+\int_{0}^{t}\left(2\|\overline{u}_{n}\|_{L^{2}(I)}^{2}
+\|P_{n}g(\overline{u}_{n})\|_{L^{2}(I)}^{2}\right)ds
\\+2\int_{0}^{t}(\overline{u}_{n},P_{n}g(\overline{u}_{n}))d\overline{B}

.
\end{array}
\end{array}
\end{equation*}
Thus, we have
\begin{equation*}
\begin{array}{l}
\begin{array}{llll}
\int_{0}^{T}\left(\|\overline{u}_{nx}\|_{L^{2}(I)}^{2}+\|\overline{u}_{n}\|_{L^{4}(I)}^{4}\right)dt
\\\leq C\left(\|u_{n0}\|_{H^{1}(I)}^{2}+\int_{0}^{T}(1+\|\overline{u}_{n}\|_{L^{2}(I)}^{2})dt
+\left|\int_{0}^{T}(\overline{u}_{n},P_{n}g(\overline{u}_{n}))d\overline{B}\right|\right),
\end{array}
\end{array}
\end{equation*}
then, for any $2\leq p <\infty,$ it holds that
\begin{equation*}
\begin{array}{l}
\begin{array}{llll}
\displaystyle~~~\left(\int_{0}^{T}\left(\|\overline{u}_{nx}\|_{L^{2}(I)}^{2}+\|\overline{u}_{n}\|_{L^{4}(I)}^{4}\right)ds\right)^{\frac{p}{2}}
\\ \displaystyle \leq C_{p}\left(\|u_{n0}\|_{H^{1}(I)}^{p}+\left(\int_{0}^{T}(1+\|\overline{u}_{n}(s)\|_{L^{2}(I)}^{2})ds\right)^{\frac{p}{2}}
+\left|\int_{0}^{T}(\overline{u}_{n},P_{n}g(\overline{u}_{n}))d\overline{B}\right|^{\frac{p}{2}}\right).
\end{array}
\end{array}
\end{equation*}
By the Burkholder-Davis-Gundy inequality and Young's inequality, we have
\begin{equation*}
\begin{array}{l}
\begin{array}{llll}
\displaystyle~~~\overline{\mathbb{E}}\left|\int_{0}^{T}(\overline{u}_{n},P_{n}g(\overline{u}_{n}))d\overline{B}\right|^{\frac{p}{2}}
\\\displaystyle\leq \overline{\mathbb{E}}\sup\limits_{0\leq t\leq T}\left|\int_{0}^{t}(\overline{u}_{n},P_{n}g(\overline{u}_{n}))d\overline{B}\right|^{\frac{p}{2}}
\\ \displaystyle\leq C \overline{\mathbb{E}}\left(\int_{0}^{T}(\overline{u}_{n},P_{n}g(\overline{u}_{n}))^{2}dt\right)^{\frac{p}{4}}
\\ \displaystyle\leq C \overline{\mathbb{E}}\left(\int_{0}^{T}(1+\phi_{n}^{2})dt\right)^{\frac{p}{4}}.
\end{array}
\end{array}
\end{equation*}
Thus
\begin{equation*}
\begin{array}{l}
\begin{array}{llll}
~~~\overline{\mathbb{E}}\left(\int_{0}^{T}\left(\|\overline{u}_{nx}\|_{L^{2}(I)}^{2}+\|\overline{u}_{n}\|_{L^{4}(I)}^{4}\right)dt\right)^{\frac{p}{2}}
\\\leq C\overline{\mathbb{E}}\|u_{n0}\|_{H^{1}(I)}^{p}
+C\overline{\mathbb{E}}\left(\int_{0}^{T}(1+\phi_{n})dt\right)^{\frac{p}{2}}
+C \overline{\mathbb{E}}\left(\int_{0}^{T}(1+\phi_{n}^{2})dt\right)^{\frac{p}{4}}
\\\leq C\overline{\mathbb{E}}\|u_{n0}\|_{H^{1}(I)}^{p}
+C\overline{\mathbb{E}}\left(\int_{0}^{T}(1+\sup\limits_{0\leq t\leq T}\phi_{n})dt\right)^{\frac{p}{2}}+C \overline{\mathbb{E}}\left(\int_{0}^{T}(1+\sup\limits_{0\leq t\leq T}\phi_{n}^{2})dt\right)^{\frac{p}{4}}
\\\leq C\overline{\mathbb{E}}\|u_{n0}\|_{H^{1}(I)}^{p}
+CT^{\frac{p}{2}}\overline{\mathbb{E}}\left(1+\sup\limits_{0\leq t\leq T}\phi_{n}\right)^{\frac{p}{2}}+C T^{\frac{p}{4}}\overline{\mathbb{E}}\left(1+\sup\limits_{0\leq t\leq T}\phi_{n}^{2}\right)^{\frac{p}{4}}
\\\leq C(1+\overline{\mathbb{E}}\|u_{n0}\|_{H^{1}(I)}^{p}
+\overline{\mathbb{E}}\sup\limits_{0\leq t\leq T}\phi_{n}^{\frac{p}{2}}).
\end{array}
\end{array}
\end{equation*}
According to (\ref{37}), it holds that
\begin{equation*}
\overline{\mathbb{E}}\left(\int_{0}^{T}(\|\overline{u}_{nx}\|_{L^{2}(I)}^{2}+\|\overline{u}_{n}\|_{L^{4}(I)}^{4})dt\right)^{\frac{p}{2}}\leq C_{p}.
\end{equation*}
\par
Case II: $1\leq p <2.$
\par
This case can be obtained from Case I and the Young inequality.
\end{proof}
\par
The next estimate is very important for the proof of the tightness of the law of the Galerkin
solution $\{\overline{u}_{n}\}_{n\geq 1}.$
\begin{lemma}\label{L3}
There exists a positive constant $C$ independent of $\varepsilon$ such that
\begin{equation}\label{46}
\overline{\mathbb{E}} \sup\limits_{0\leq |\theta|\leq \delta} \int_{0}^{T}\|\overline{u}_{n}(t+\theta)-\overline{u}_{n}(t)\|_{H^{-1}(I)}^{2}dt\leq C\delta,
\end{equation}
for any $0<\delta\leq1.$
\end{lemma}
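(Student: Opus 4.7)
The plan is to estimate the increments of $v_n:=\overline{u}_n-\varepsilon\overline{u}_{nxx}$ in $H^{-1}(I)$ and then transfer the bound to $\overline{u}_n$. Writing $\overline{u}_n=\sum_{k=1}^{n}c_k e_k$ in the eigenbasis, one has $\|\overline{u}_n\|_{H^{-1}}^2=\sum_k\lambda_k^{-1}c_k^2$ and $\|v_n\|_{H^{-1}}^2=\sum_k\lambda_k^{-1}(1+\varepsilon\lambda_k)^2 c_k^2$, so $\|\overline{u}_n(t+\theta)-\overline{u}_n(t)\|_{H^{-1}}\le\|v_n(t+\theta)-v_n(t)\|_{H^{-1}}$ uniformly in $\varepsilon$. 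From the Galerkin equation I would decompose
\begin{equation*}
v_n(t+\theta)-v_n(t)=\int_t^{t+\theta}\!\bigl(\overline{u}_{nxx}-P_n\overline{u}_n^3+\overline{u}_n\bigr)\,ds+\int_t^{t+\theta}\!P_n g(\overline{u}_n)\,d\overline{B}=:D_n(t,\theta)+M_n(t,\theta)
\end{equation*}
and treat the two pieces separately.

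For $D_n$, I would use $\|w_{xx}\|_{H^{-1}}\le\|w_x\|_{L^2}$, $\|w\|_{H^{-1}}\le C\|w\|_{L^2}$, and $\|w^3\|_{H^{-1}}\le C\|w^3\|_{L^{4/3}}=C\|w\|_{L^4}^3$ (by $H^1\hookrightarrow L^4$ in dimension one and duality), together with the fact that $P_n$ is bounded on $H^{-1}$. Cauchy--Schwarz and Fubini then yield
\begin{equation*}
\sup_{|\theta|\le\delta}\int_0^T\|D_n(t,\theta)\|_{H^{-1}}^2\,dt\le\delta^2\int_0^{T+\delta}\bigl(\|\overline{u}_{nx}\|_{L^2}^2+\|\overline{u}_n\|_{L^4}^6+\|\overline{u}_n\|_{L^2}^2\bigr)\,ds.
\end{equation*}
Gagliardo--Nirenberg gives $\|\overline{u}_n\|_{L^4}^6\le C\|\overline{u}_n\|_{L^2}^{9/2}\|\overline{u}_{nx}\|_{L^2}^{3/2}$; splitting $\overline{u}_n$ between a $\sup_t$-factor and the $\|\overline{u}_{nx}\|_{L^2}^{3/2}$-factor and applying H\"older in time followed by Cauchy--Schwarz, Lemma~\ref{L2} (with $p=9$ and $p=3$) controls the expectation by a finite constant. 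The deterministic contribution is therefore bounded by $C\delta^2\le C\delta$.

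For $M_n$ I would first swap the sup with the outer integral, $\sup_\theta\int_0^T\le\int_0^T\sup_\theta$, and drop to the $L^2$-norm via $\|\cdot\|_{H^{-1}}\le C\|\cdot\|_{L^2}$. The key point is that, once $t$ is fixed, $\theta\mapsto M_n(t,\theta)$ is a continuous $L^2(I)$-valued martingale in $\theta\in[0,\delta]$ with respect to the shifted filtration, so Doob's maximal inequality and the It\^o isometry yield
\begin{equation*}
\overline{\mathbb{E}}\sup_{\theta\in[0,\delta]}\|M_n(t,\theta)\|_{L^2}^2\le 4\,\overline{\mathbb{E}}\int_t^{t+\delta}\|P_n g(\overline{u}_n)\|_{L^2}^2\,ds\le C\,\overline{\mathbb{E}}\int_t^{t+\delta}\bigl(1+\|\overline{u}_n\|_{L^2}^2\bigr)\,ds.
\end{equation*}
Fubini in $t$ together with Lemma~\ref{L2} then bound this by $C\delta$, and negative shifts $\theta\in[-\delta,0]$ reduce to the previous case through the change of variable $t'=t+\theta$. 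The main obstacle is precisely this exchange of $\sup_\theta$ with the expectation on the stochastic piece: the trick is to push $\sup_\theta$ inside the $t$-integral first, because only then does $\theta\mapsto M_n(t,\theta)$ appear as a genuine martingale to which Doob applies.
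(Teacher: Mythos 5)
Your proposal is correct and follows essentially the same route as the paper: the same reduction to increments of $v_n=\overline{u}_n-\varepsilon\overline{u}_{nxx}$ with the transfer back to $\overline{u}_n$ via the uniform bound $\|\overline{u}_n\|_{H^{-1}(I)}\le\|v_n\|_{H^{-1}(I)}$, the same split into a deterministic part handled by Cauchy--Schwarz, Fubini and the moment bounds of Lemma~\ref{L2}, and the same key manoeuvre of pushing $\sup_\theta$ inside the $t$-integral before applying a martingale maximal inequality to the stochastic term. The only cosmetic difference is that you estimate the cubic term through $\|w^3\|_{H^{-1}(I)}\le C\|w\|_{L^4(I)}^3$ and Gagliardo--Nirenberg, whereas the paper bounds $\|P_n\overline{u}_n^3-\overline{u}_n\|_{H^{-1}(I)}$ by its $L^2(I)$-norm and uses the $L^6$-estimate; both rest on the same higher moments.
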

\begin{remark}
In the above lemma, $\overline{u}_{n}$ is extended to $0$ outside $[0,T]$.
\end{remark}
\begin{proof}
We set
\begin{equation*}
\begin{array}{l}
\begin{array}{llll}
\overline{ v}_{n}(t)=(\overline{u}_{n}-\varepsilon \overline{u}_{nxx})(t),
\end{array}
\end{array}
\end{equation*}
it is easy to see that
\begin{equation*}
\begin{array}{l}
\begin{array}{llll}
\overline{ v}_{n}(t+\theta)-\overline{ v}_{n}(t)=\int_{t}^{t+\theta}\overline{u}_{nxx}(s)ds-\int_{t}^{t+\theta}(P_{n}\overline{u}_{n}^{3}-\overline{u}_{n})(s)ds+\int_{t}^{t+\theta}P_{n}g(\overline{u}_{n}(s))d\overline{B},
\end{array}
\end{array}
\end{equation*}
which implies
\begin{equation}\label{47}
\begin{array}{l}
\begin{array}{llll}
~~~\|\overline{ v}_{n}(t+\theta)-\overline{ v}_{n}(t)\|_{H^{-1}(I)}
\\\leq\|\int_{t}^{t+\theta}\overline{u}_{nxx}(s)ds\|_{H^{-1}(I)}
+\|\int_{t}^{t+\theta}(P_{n}\overline{u}^{3}_{n}-\overline{u}_{n})(s)ds\|_{H^{-1}(I)}+\|\int_{t}^{t+\theta}P_{n}g(\overline{u}_{n}(s))d\overline{B}\|_{H^{-1}(I)}
\\\leq\int_{t}^{t+\theta}\|\overline{u}_{nxx}(s)\|_{H^{-1}(I)}ds
+\int_{t}^{t+\theta}\|(P_{n}\overline{u}^{3}_{n}-\overline{u}_{n})(s)\|_{H^{-1}(I)}ds+\|\int_{t}^{t+\theta}P_{n}g(\overline{u}_{n}(s))d\overline{B}\|_{H^{-1}(I)}.
\end{array}
\end{array}
\end{equation}
Taking the square in both side of (\ref{47}), we have
\begin{equation*}
\begin{array}{l}
\begin{array}{llll}
\|\overline{ v}_{n}(t+\theta)-\overline{ v}_{n}(t)\|_{H^{-1}(I)}^{2}
\\\leq(\int_{t}^{t+\theta}\|\overline{u}_{nxx}(s)\|_{H^{-1}(I)}ds
+\int_{t}^{t+\theta}\|(P_{n}\overline{u}^{3}_{n}-\overline{u}_{n})(s)\|_{H^{-1}(I)}ds+\|\int_{t}^{t+\theta}P_{n}g(\overline{u}_{n}(s))d\overline{B}\|_{H^{-1}(I)})^{2}
\\\leq C\theta\int_{t}^{t+\theta}(\|\overline{u}_{nxx}(s)\|_{H^{-1}(I)}^{2}+\|(P_{n}\overline{u}^{3}_{n}-\overline{u}_{n})(s)\|_{H^{-1}(I)}^{2})ds+C\|\int_{t}^{t+\theta}P_{n}g(\overline{u}_{n}(s))d\overline{B}\|_{H^{-1}(I)}^{2}
.
\end{array}
\end{array}
\end{equation*}
\par
We can infer from (\ref{44}) and (\ref{45}) that
\begin{equation}\label{48}
\begin{array}{l}
\begin{array}{llll}
\mathbb{E}\int_{0}^{T}\int_{t}^{t+\delta}\|\overline{u}_{nxx}\|_{H^{-1}(I)}^{2}dsdt
\\\leq\delta \mathbb{E}\int_{0}^{T}\|\overline{u}_{nx}(t)\|_{L^{2}(I)}^{2}dt
\\\leq C\delta,
\\
\mathbb{E}\int_{0}^{T}\int_{t}^{t+\delta}\|(P_{n}\overline{u}^{3}_{n}-\overline{u}_{n})(s)\|_{H^{-1}(I)}^{2}dsdt
\\\leq
\mathbb{E}\int_{0}^{T}\int_{t}^{t+\delta}\|(P_{n}\overline{u}^{3}_{n}-\overline{u}_{n})(s)\|_{L^{2}(I)}^{2}dsdt
\\=\delta \mathbb{E}\int_{0}^{T}\|P_{n}\overline{u}^{3}_{n}-\overline{u}_{n}\|_{L^{2}(I)}^{2}dt
\\\leq C\delta [\mathbb{E}(\int_{0}^{T}\|\overline{u}_{nx}\|_{L^{2}(I)}^{2}dt)^{2}+\mathbb{E}\sup\limits_{0\leq t\leq T}\|\overline{u}_{n}\|_{L^{2}(I)}^{8}+\mathbb{E}\int_{0}^{T}\|\overline{u}_{n}\|_{L^{2}(I)}^{2}dt]
\\\leq C\delta.
\end{array}
\end{array}
\end{equation}

By the Burkholder-Davis-Gundy inequality and Young's inequality, we have
\begin{equation}\label{49}
\begin{array}{l}
\begin{array}{llll}
\mathbb{E} \sup\limits_{0\leq |\theta|\leq \delta} \int_{0}^{T}\|\int_{t}^{t+\theta}P_{n}g(\overline{u}_{n}(s))dB\|_{H^{-1}(I)}^{2}dt
\\\leq\mathbb{E} \sup\limits_{0\leq |\theta|\leq \delta} \int_{0}^{T}\|\int_{t}^{t+\theta}P_{n}g(\overline{u}_{n}(s))dB\|_{L^{2}(I)}^{2}dt
\\\leq \mathbb{E}\int_{0}^{T}\sup\limits_{0\leq |\theta|\leq \delta}\|\int_{t}^{t+\theta}P_{n}g(\overline{u}_{n}(s))dB\|_{L^{2}(I)}^{2}dt
\\=\int_{0}^{T}\mathbb{E}\sup\limits_{0\leq |\theta|\leq \delta}\|\int_{t}^{t+\theta}P_{n}g(\overline{u}_{n}(s))dB\|_{L^{2}(I)}^{2}dt
\\\leq C \int_{0}^{T}\mathbb{E}\int_{t}^{t+\delta}\|P_{n}g(\overline{u}_{n}(s))\|_{L^{2}(I)}^{2}dsdt
\\\leq C\delta \mathbb{E}\int_{0}^{T}\|P_{n}g(\overline{u}_{n}(t))\|_{L^{2}(I)}^{2}dt
\\\leq C\delta \mathbb{E}\int_{0}^{T}\|g(\overline{u}_{n}(t))\|_{L^{2}(I)}^{2}dt
\\\leq C\delta \mathbb{E}\int_{0}^{T}(1+\|\overline{u}_{n}\|_{L^{2}(I)}^{2})dt
\\\leq C\delta.
\end{array}
\end{array}
\end{equation}
It follows from (\ref{47})-(\ref{49}) that
\begin{equation*}
\mathbb{E}\sup\limits_{0\leq |\theta|\leq \delta} \int_{0}^{T}\|\overline{ v}_{n}(t+\theta)-\overline{ v}_{n}(t)\|_{H^{-1}(I)}^{2}dt\leq C\delta.
\end{equation*}
By the regularity theory of elliptic equation
\begin{equation*}
\begin{array}{l}
\left\{
\begin{array}{llll}
\overline{u}_{n}-\varepsilon \overline{u}_{nxx}=\overline{ v}_{n}
\\\overline{u}_{n}(0,t)=0=\overline{u}_{n}(1,t),
\end{array}
\right.
\end{array}
\begin{array}{lll}
{\rm{in}}~I\\
\\
\end{array}
\end{equation*}
we have
\begin{equation*}
\begin{array}{l}
\begin{array}{llll}
\|\overline{u}_{n}(t)\|_{H^{-1}(I)}\leq\|\overline{v}_{n}(t)\|_{H^{-1}(I)},
\end{array}
\end{array}
\end{equation*}
thus, we have (\ref{46}).
\end{proof}
\par \textbf{Step 3. Tightness property of Galerkin solutions.}

We may rewrite Lemma \ref{L7} in the following more convenient form.
\par
By the same way as in \cite[P919]{S2}, according to the priori estimates (\ref{43})(\ref{44})(\ref{45})(\ref{46}), we obtain that
\begin{lemma}
For any $1\leq p<\infty$ and for any sequences $\mu_{m},\nu_{m}$ converging to $0$ such that the series $\sum\limits_{m=1}^{\infty}\frac{\mu_{m}^{\frac{1}{2}}}{\nu_{m}}$ converges, $\{\overline{u}_{n}:n\in \mathbb{N}\}$ is bounded in $X_{p,\mu_{m},\nu_{m}}^{1}$ (the explicit definition of the space $X_{p,\mu_{m},\nu_{m}}^{1}$ can be found in Section 2) for any $m.$
\end{lemma}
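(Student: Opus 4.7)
The proof reduces to bounding each of the three terms in the norm $\|\cdot\|_{X^{1}_{p,\mu_m,\nu_m}}$ uniformly in $n$ (and in $m$ for the third term). The first two summands, namely $(\overline{\mathbb{E}}\sup_{0\leq t\leq T}\|\overline{u}_n(t)\|_{L^{2}(I)}^{2p})^{1/(2p)}$ and $(\overline{\mathbb{E}}(\int_0^T\|\overline{u}_n(t)\|_{H^1(I)}^{2}dt)^{p/2})^{2/p}$, are controlled directly by the $\varepsilon$-uniform a priori estimates (3.12) and (3.13) from Lemma \ref{L2} (combined with the Poincar\'e inequality to pass from $\|\overline{u}_{nx}\|_{L^2}$ to $\|\overline{u}_n\|_{H^1}$). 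So the core of the argument is the third summand involving the modulus of continuity in $H^{-1}$.

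The plan for that term is to trade the $\sup_m$ for a $\sum_m$, since for any nonnegative sequence $a_m$ one has $\sup_m a_m\leq \sum_m a_m$. Writing $I_m(\overline{u}_n):=\left(\sup_{|\theta|\leq\mu_m}\int_0^{T-\theta}\|\overline{u}_n(t+\theta)-\overline{u}_n(t)\|_{H^{-1}(I)}^{2}dt\right)^{1/2}$, I would estimate
\begin{equation*}
\overline{\mathbb{E}}\sup_m\frac{1}{\nu_m}I_m(\overline{u}_n)\leq\sum_m\frac{1}{\nu_m}\overline{\mathbb{E}}\,I_m(\overline{u}_n).
\end{equation*}
By Jensen's inequality (Cauchy--Schwarz on $\overline{\mathbb{P}}$), $\overline{\mathbb{E}}\,I_m(\overline{u}_n)\leq\bigl(\overline{\mathbb{E}}\sup_{|\theta|\leq\mu_m}\int_0^{T-\theta}\|\overline{u}_n(t+\theta)-\overline{u}_n(t)\|_{H^{-1}(I)}^{2}dt\bigr)^{1/2}$, and this last quantity is bounded by $(C\mu_m)^{1/2}$ thanks to the key time-increment estimate (3.18) in Lemma \ref{L3} (with $\delta=\mu_m\leq 1$, applied after verifying that $\overline{u}_n$ being extended by zero outside $[0,T]$ does not affect the argument). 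Hence
\begin{equation*}
\overline{\mathbb{E}}\sup_m\frac{1}{\nu_m}I_m(\overline{u}_n)\leq C\sum_m\frac{\mu_m^{1/2}}{\nu_m},
\end{equation*}
which is finite by the standing hypothesis on $\{\mu_m,\nu_m\}$, with a bound independent of $n$ and $\varepsilon$.

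Putting the three bounds together yields $\sup_n\|\overline{u}_n\|_{X^{1}_{p,\mu_m,\nu_m}}<\infty$, completing the proof. The main subtlety, rather than a real obstacle, is the swap $\sup_m\mapsto\sum_m$: it is what allows the deterministic rate $\mu_m^{1/2}$ coming from Lemma \ref{L3} to be combined with the summability assumption on $\mu_m^{1/2}/\nu_m$ to absorb the $\sup_m$ that appears inside the expectation. Everything else is a direct invocation of the already-established uniform estimates.
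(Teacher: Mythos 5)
Your proposal is correct and follows exactly the argument the paper relies on: the paper does not write this proof out but simply refers to \cite[P919]{S2} together with the estimates (\ref{43})--(\ref{46}), and the argument carried out there is precisely yours --- the first two summands of the $X^{1}_{p,\mu_m,\nu_m}$-norm come from (\ref{44})--(\ref{45}) (with Poincar\'e), and the third is handled by replacing $\sup_m$ with $\sum_m$, applying Cauchy--Schwarz in $\overline{\mathbb{E}}$, invoking the increment bound (\ref{46}) with $\delta=\mu_m$, and using the summability of $\sum_m \mu_m^{1/2}/\nu_m$. No gaps.
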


\par
Let
$$X= C([0,T];\mathbb{R}^{1})\times L^{2}(0,T;L^{2}(I))$$
and $\mathcal{B}(X)$ be the $\sigma-$algebra of the Borel sets of $X.$
\par
For each $n,$ let $\Phi_{n}$ be the map
\begin{equation*}
\begin{array}{l}
\begin{array}{llll}
\Phi_{n}:~~\overline{\Omega}~~~~\rightarrow ~~~~~~~~~~~~~~~X
\\~~~~~~~\overline{\omega}~~~~\rightarrow~~~~~ (\overline{B}(\overline{\omega}),\overline{u}_{n}(\overline{\omega})),
\end{array}
\end{array}
\end{equation*}
and $\Pi_{n}$ be a probability measure on $(X,\mathcal{B}(X))$ defined by
$$\Pi_{n}(A)=\overline{\mathbb{P}}(\Phi^{-1}_{n}(A)), A\in \mathcal{B}(X).$$
\begin{proposition}\label{Pro4}
The family of probability measures $\{\Pi_{n}:n=1,2,3,...\}$ is tight in $X.$
\end{proposition}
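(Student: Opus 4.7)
The plan is the classical reduction: to establish tightness of the joint laws $\Pi_n$ on the product space $X=C([0,T];\mathbb{R}^{1})\times L^{2}(0,T;L^{2}(I))$, it suffices to show tightness of each marginal, since then for any $\eta>0$ one picks compact sets $K^{(1)}\subset C([0,T];\mathbb{R}^{1})$ and $K^{(2)}\subset L^{2}(0,T;L^{2}(I))$ with $\overline{\mathbb{P}}(\overline{B}\notin K^{(1)})\le \eta/2$ and $\overline{\mathbb{P}}(\overline{u}_n\notin K^{(2)})\le\eta/2$ uniformly in $n$, and then $K^{(1)}\times K^{(2)}$ is compact in $X$ with $\Pi_n((K^{(1)}\times K^{(2)})^c)\le\eta$ for every $n$.

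The first marginal is trivial: the law of $\overline{B}$ does not depend on $n$, so by Prokhorov's theorem (Lemma 2.3) this single Wiener measure on $C([0,T];\mathbb{R}^{1})$ is already tight, and I can fix $K^{(1)}$ once and for all.

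For the second marginal, I will use the compactness criterion in Corollary 2.1 with the choice $X=H^{1}(I)$, $B=L^{2}(I)$ and $Y=H^{-1}(I)$; the embedding $H^{1}(I)\hookrightarrow L^{2}(I)$ is compact and $L^{2}(I)\subset H^{-1}(I)$, so the hypotheses are met. Fix sequences $\mu_m,\nu_m\to 0$ with $\sum_m \mu_m^{1/2}/\nu_m<\infty$, as in the lemma preceding the statement; then the closed ball
\begin{equation*}
K_R=\bigl\{y\in L^{2}(0,T;L^{2}(I)):\|y\|_{Y^{1}_{\mu_m,\nu_m}}\le R\bigr\}
\end{equation*}
is compact in $L^{2}(0,T;L^{2}(I))$ for every $R>0$. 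By the Markov/Chebyshev inequality and the uniform bound $\overline{\mathbb{E}}\|\overline{u}_n\|_{Y^{1}_{\mu_m,\nu_m}}\le C$ that is immediate from the estimates (\ref{43})--(\ref{46}) and the definition of the norm $\|\cdot\|_{X^{1}_{p,\mu_m,\nu_m}}$ (take e.g.\ $p=2$),
\begin{equation*}
\overline{\mathbb{P}}(\overline{u}_n\notin K_R)\;\le\;\frac{\overline{\mathbb{E}}\|\overline{u}_n\|_{Y^{1}_{\mu_m,\nu_m}}}{R}\;\le\;\frac{C}{R},
\end{equation*}
with $C$ independent of $n$ (and independent of $\varepsilon$, which is built into the estimates of Step 2). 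Choosing $R=R(\eta)$ large enough, $\overline{\mathbb{P}}(\overline{u}_n\notin K_R)\le\eta/2$ for every $n$, which is the desired uniform tightness of the second marginal.

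The main obstacle, and the reason this proposition is nontrivial, is the verification that the sublevel sets $K_R$ of the $Y^{1}_{\mu_m,\nu_m}$-norm are actually compact in $L^{2}(0,T;L^{2}(I))$. This is precisely Corollary 2.1, whose use is predicated on two things: a uniform $L^{2}(0,T;H^{1}(I))\cap L^{\infty}(0,T;L^{2}(I))$ bound (coming from (\ref{43}) and (\ref{44})), and a uniform equicontinuity-in-time estimate in $H^{-1}(I)$ controlled by $\nu_m$ and $\mu_m$ (coming from (\ref{46})). Once the norm control in $X^{1}_{p,\mu_m,\nu_m}$ is established, the Markov inequality argument just above closes the argument. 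The remaining bookkeeping--checking that compactness on each factor does give compactness of the product in the product topology of $X$, and that $\Pi_n$ is well defined as the pushforward--is routine.
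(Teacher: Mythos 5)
Your proof is correct and, for the $\overline{u}_n$ factor, is exactly the paper's argument: compactness of balls in $Y^{1}_{\mu_m,\nu_m}$ via Corollary \ref{c1}, then Chebyshev with the uniform bound $\overline{\mathbb{E}}\|\overline{u}_n\|_{Y^{1}_{\mu_m,\nu_m}}\leq C$ coming from (\ref{43})--(\ref{46}) and the summability condition on $\mu_m^{1/2}/\nu_m$. Where you diverge is the Wiener factor: the paper constructs the compact set $\Sigma_\rho$ explicitly as a modulus-of-continuity class, $\Sigma_{\rho}=\{B:\sup_{|t_2-t_1|\leq n^{-6}}n|B(t_2)-B(t_1)|\leq L_\rho\}$, and bounds $\overline{\mathbb{P}}(\overline{B}\notin\Sigma_\rho)$ by hand via Chebyshev and the moment identity $\overline{\mathbb{E}}|\overline{B}(t_2)-\overline{B}(t_1)|^{2i}=(2i-1)!(t_2-t_1)^{i}$, whereas you observe that the law of $\overline{B}$ does not depend on $n$ and that a single Borel probability measure on a Polish space is tight (equivalently, the constant sequence is trivially relatively compact, so Lemma \ref{L1}'s companion Lemma \ref{L4} applies). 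Your route is shorter and avoids the explicit Arzel\`a--Ascoli-type computation; the paper's route has the advantage of producing a concrete compact set and of being self-contained at the level of elementary estimates, which is why the same $\Sigma_\rho$ is reused verbatim in the tightness proof of Section 4. Both are valid; the reduction of joint tightness to marginal tightness via products of compacts is the same in both.
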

\begin{proof}
For any $\rho >0,$ we should find the compact subsets
\begin{equation*}
\begin{array}{l}
\begin{array}{llll}
 \Sigma_{\rho}\subset C([0,T];\mathbb{R}^{1}), Y_{\rho}\subset L^{2}(0,T;L^{2}(I)),
\end{array}
\end{array}
\end{equation*}
such that
\begin{eqnarray}
\label{41}\overline{\mathbb{P}}(\overline{\omega}:\overline{B}(\overline{\omega},\cdot)\notin\Sigma_{\rho})\leq\frac{\rho}{2},
\\\label{50}\overline{\mathbb{P}}(\overline{\omega}:\overline{u}_{n}(\overline{\omega},\cdot)\notin Y_{\rho})\leq\frac{\rho}{2}
.
\end{eqnarray}

Noting the formula
\begin{equation*}
\overline{\mathbb{E}}|\overline{B}(t_{2})-\overline{B}(t_{1})|^{2i}=(2i-1)!(t_{2}-t_{1})^{i},i=1,2,...
\end{equation*}
we define
\begin{equation*}
\Sigma_{\rho}\triangleq\left\{B(\cdot)\in C([0,T];\mathbb{R}^{1}):\sup\limits_{t_{1},t_{2}\in [0,T], |t_{2}-t_{1}|\leq \frac{1}{n^{6}}}n|B(t_{2})-B(t_{1})|\leq L_{\rho}\right\}
\end{equation*}
where $n\in \mathbb{N},$ $L_{\rho}$ is a constant depending on $\rho$ and will be chosen later.
\par
By the Chebyshev inequality, we get
\begin{equation*}
\begin{array}{l}
\begin{array}{llll}
~~~\displaystyle\overline{\mathbb{P}}(\overline{\omega}:\overline{B}(\overline{\omega},\cdot)\notin\Sigma_{\rho})
\\ \displaystyle\leq\overline{\mathbb{P}}\left(\bigcup_{n}\left\{\omega:\sup\limits_{t_{1},t_{2}\in [0,T], |t_{2}-t_{1}|\leq \frac{1}{n^{6}}}|\overline{B}(t_{2})-\overline{B}(t_{1})|> \frac{L_{\rho}}{n}\right\}\right)
\\ \displaystyle \leq\sum\limits_{n=1}^{\infty}\sum\limits_{i=0}^{n^{6}-1}(\frac{n}{L_{\rho}})^{4}\overline{\mathbb{E}}\sup\limits_{\frac{iT}{n^{6}} \leq t\leq\frac{(i+1)T}{n^{6}}}|\overline{B}(t)-\overline{B}(\frac{iT}{n^{6}})|^{4}
\\ \displaystyle \leq C\sum\limits_{n=1}^{\infty}(\frac{n}{L_{\rho}})^{4}(Tn^{-6})^{2}n^{6}
\\ \displaystyle=\frac{C}{L_{\rho}^{4}}\sum\limits_{n=1}^{\infty}\frac{1}{n^{2}},
\end{array}
\end{array}
\end{equation*}
we choose $L_{\rho}^{4}=2C\rho^{-1}\sum\limits_{n=1}^{\infty}\frac{1}{n^{2}}$ to get (\ref{41}).
\par
Let $Y_{\rho}^{1}$ be a ball of radius $M_{\rho}$ in $Y_{\mu_{m},\nu_{m}}^{1}$ (the explicit definition of the space $Y_{\mu_{m},\nu_{m}}^{1}$ can be found in Section 2), centered at zero, namely $Y_{\rho}^{1}=\{u\in Y_{\mu_{m},\nu_{m}}^{1}~|~\|u\|_{Y_{\mu_{m},\nu_{m}}^{1}}\leq M_{\rho}\}.$ From Corollary \ref{c1}, $Y_{\rho}^{1}$ is a compact subset of $L^{2}(0,T;L^{2}(I)),$ and
\begin{equation*}
\overline{\mathbb{P}}(\overline{\omega}:\overline{u}_{n}(\overline{\omega},\cdot)\notin Y_{\rho}^{1})\leq \overline{\mathbb{P}}(\overline{\omega}:\|\overline{u}_{n}\|_{Y_{\mu_{m},\nu_{m}}^{1}}>M_{\rho})
\leq \frac{1}{M_{\rho}}\overline{\mathbb{E}}\|\overline{u}_{n}\|_{Y_{\mu_{m},\nu_{m}}^{1}}\leq \frac{C}{M_{\rho}},
\end{equation*}
choosing $M_{\rho}=2C\rho^{-1},$ we get (\ref{50}).
\par
It follows from (\ref{41}) and (\ref{50}) that
$$\Pi_{n}(\Sigma_{\rho}\times Y_{\rho}^{1})\geq 1-\rho, $$
for any $n\geq1.$
\par
Thus, the family of probability measures $\{\Pi_{n}:n=1,2,3,...\}$ is tight in $X.$
\end{proof}

\par \textbf{Step 4. Applications of Prokhorov Theorem and Skorokhod Theorem.}
\par
By Lemma \ref{L4}, we can find a probability measure $\Pi$ and extract a subsequence from $\Pi_{n}$ such that
\begin{equation*}
\begin{array}{l}
\begin{array}{llll}
\Pi_{n_{i}}\rightarrow\Pi
\end{array}
\end{array}
\end{equation*}
weakly in $X.$
\par
By Lemma \ref{L5}, there exists a probability space $(\Omega,\mathcal{F},\mathbb{P})$ and random variables $(u_{n_{i}},B_{n_{i}}),$  $(u,B)$ on
$(\Omega,\mathcal{F},\mathbb{P})$ with values in $X$ such that the probability law of $(u_{n_{i}},B_{n_{i}})$ is $\Pi_{n_{i}}.$ Furthermore,
$$(u_{n_{i}},B_{n_{i}})\rightarrow (u,B)~~{\rm{in}}~~X~~P-a.s.$$
and the probability law of $(u,B)$ is $\Pi.$
\par
Set $$\mathcal{F}_{t}=\sigma\{u(s),B(s)\}_{s\in [0,t]}.$$
By the idea in \cite{S2,S3}, we can know $B(t)$ is a $\mathcal{F}_{t}-$standard Wiener process.
\par
We claim that $(u_{n_{i}},B_{n_{i}})$ verifies the following
$dt\otimes d\mathbb{P}-$almost everywhere:
\begin{equation}\label{51}
\begin{array}{l}
\begin{array}{llll}
[(u_{n_{i}}(t),\varphi)+\varepsilon(u_{n_{i}x}(t),\varphi_{x})]-[(u_{n_{i}0},\varphi)+\varepsilon(u_{n_{i}0x},\varphi_{x})]+\int_{0}^{t}((u_{n_{i}x},\varphi_{x})+(P_{n_{i}}u_{n_{i}}^{3}-u_{n_{i}},\varphi))ds
\\~~~~~~~~~~~~~~~~~=\int_{0}^{t}(g(u_{n_{i}}),\varphi)dB_{n_{i}}
\end{array}
\end{array}
\end{equation}
for all $\varphi\in H^{1}_{0}(I).$
\par
Indeed, we set
\begin{equation*}
\begin{array}{l}
\begin{array}{llll}
\xi_{n}(t)=[\overline{u}_{n}(t)-\varepsilon \overline{u}_{nxx}(t)]-[u_{n0}-\varepsilon u_{n0xx}]+\int_{0}^{t}(-\overline{u}_{nxx}+P_{n}\overline{u}_{n}^{3}-\overline{u}_{n})ds
\\~~~~~~~~~~-\int_{0}^{t}P_{n}g(\overline{u}_{n})d\overline{B},
\\ \eta_{n_{i}}(t)=[u_{n_{i}}(t)-\varepsilon u_{n_{i}xx}(t)]-[u_{n_{i}0}-\varepsilon u_{n_{i}0xx}]+\int_{0}^{t}(-u_{n_{i}xx}+P_{n_{i}}u_{n_{i}}^{3}-u_{n_{i}})ds
\\~~~~~~~~~~-\int_{0}^{t}P_{n_{i}}g(u_{n_{i}})dB_{n_{i}},
\\ X_{n}=\int_{0}^{T}\|\xi_{n}(t)\|_{H^{-1}(I)}^{2}dt,
\\
Y_{n_{i}}=\int_{0}^{T}\|\eta_{n_{i}}(t)\|_{H^{-1}(I)}^{2}dt.
\end{array}
\end{array}
\end{equation*}
It is easy to see almost surely $X_{n}=0,$
hence, in particular,
$\overline{\mathbb{E}}\frac{X_{n}}{1+X_{n}}=0.$
\par
Next, we show that
\begin{equation*}
\mathbb{E}\frac{Y_{n_{i}}}{1+Y_{n_{i}}}=0,
\end{equation*}
which will imply (\ref{51}).
\par
Indeed, motivated by \cite{S2}, we introduce a regularization of $g,$ given by
\begin{equation*}
g^{\rho}(y(t))=\frac{1}{\rho}\int_{0}^{t}\beta\left(-\frac{t-s}{\rho}\right)g(y(s))ds,
\end{equation*}
where $\beta$ is a mollifier. It is easy to check that
\begin{equation*}
\mathbb{E}\int_{0}^{T}\|g^{\rho}(y(t))\|_{L^{2}(I)}^{2}dt\leq \mathbb{E}\int_{0}^{T}\|g(y(t))\|_{L^{2}(I)}^{2}dt
\end{equation*}
and
\begin{equation*}
g^{\rho}(y(\cdot))\rightarrow g(y(\cdot))~~ {\rm{in}}~~L^{2}(\Omega,L^{2}(0,T;L^{2}(I))).
\end{equation*}
Then we denote by $X_{n,\rho}$ and $Y_{n_{i},\rho}$ the analog of $X_{n}$ and $Y_{n_{i}}$ with $g$ replaced by $g^{\rho}.$ Introduce
the mapping
\begin{equation*}
\Phi_{n,\rho}(\overline{B},\overline{u}_{n})=\frac{X_{n,\rho}}{1+X_{n,\rho}},
\end{equation*}
owing to the definition of $X_{n,\rho},$ it is easy to see that $\Phi_{n,\rho}$ is bounded and continuous on $C([0,T],\mathbb{R}^{1})\times L^{2}(0,T;L^{2}(I)).$
Similarly, set
\begin{equation*}
\Psi_{n_{i},\rho}(B_{n_{i}},u_{n_{i}})=\frac{Y_{n_{i},\rho}}{1+Y_{n_{i},\rho}}.
\end{equation*}
According to Lemma \ref{L5}, we have
\begin{equation*}
\mathbb{E}\frac{Y_{n_{i},\rho}}{1+Y_{n_{i},\rho}}=\mathbb{E}\Psi_{n_{i},\rho}(B_{n_{i}},u_{n_{i}})=\int_{S}\Psi_{n_{i},\rho}d\Pi_{n_{i}}
=\overline{\mathbb{E}}\Phi_{n_{i},\rho}(\overline{B},\overline{u}_{n_{i}})=\overline{\mathbb{E}}\frac{X_{n_{i},\rho}}{1+X_{n_{i},\rho}},
\end{equation*}
therefore,
\begin{equation*}
\begin{array}{l}
\begin{array}{llll}
\displaystyle ~~~\mathbb{E}\frac{Y_{n_{i}}}{1+Y_{n_{i}}}-\overline{\mathbb{E}}\frac{X_{n_{i}}}{1+X_{n_{i}}}
\\ \displaystyle=\mathbb{E}\left(\frac{Y_{n_{i}}}{1+Y_{n_{i}}}-\frac{Y_{n_{i},\rho}}{1+Y_{n_{i},\rho}}\right)
+\mathbb{E}\frac{Y_{n_{i},\rho}}{1+Y_{n_{i},\rho}}-\overline{\mathbb{E}}\frac{X_{n_{i},\rho}}{1+X_{n_{i},\rho}}
+\overline{\mathbb{E}}\left(\frac{X_{n_{i},\rho}}{1+X_{n_{i},\rho}}-\frac{X_{n_{i}}}{1+X_{n_{i}}}\right)
\\ \displaystyle=\mathbb{E}\left(\frac{Y_{n_{i}}}{1+Y_{n_{i}}}-\frac{Y_{n_{i},\rho}}{1+Y_{n_{i},\rho}}\right)
+\overline{\mathbb{E}}\left(\frac{X_{n_{i},\rho}}{1+X_{n_{i},\rho}}-\frac{X_{n_{i}}}{1+X_{n_{i}}}\right).
\end{array}
\end{array}
\end{equation*}
It is clear that
\begin{equation*}
\begin{array}{l}
\begin{array}{llll}
~~~\displaystyle\left||\mathbb{E}\frac{Y_{n_{i}}}{1+Y_{n_{i}}}|-|\overline{\mathbb{E}}\frac{X_{n_{i}}}{1+X_{n_{i}}}|\right|
\\ \displaystyle\leq \left|\mathbb{E}\frac{Y_{n_{i}}}{1+Y_{n_{i}}}-\overline{\mathbb{E}}\frac{X_{n_{i}}}{1+X_{n_{i}}}\right|
\\ \displaystyle\leq \left|\mathbb{E}\left(\frac{Y_{n_{i}}}{1+Y_{n_{i}}}-\frac{Y_{n_{i},\rho}}{1+Y_{n_{i},\rho}}\right)\right|
+\left|\overline{\mathbb{E}}\left(\frac{X_{n_{i},\rho}}{1+X_{n_{i},\rho}}-\frac{X_{n_{i}}}{1+X_{n_{i}}}\right)\right|
\\ \displaystyle \leq C\left(\mathbb{E}\int_{0}^{T}\|g^{\rho}(\overline{u}_{n_{i}}(t))
-g(\overline{u}_{n_{i}}(t))\|_{L^{2}(I)}^{2}dt\right)^{\frac{1}{2}}.
\end{array}
\end{array}
\end{equation*}
As $\rho\rightarrow0,$ it follows that
\begin{equation*}
\left|\mathbb{E}\frac{Y_{n_{i}}}{1+Y_{n_{i}}}\right|=\left|\overline{\mathbb{E}}\frac{X_{n_{i}}}{1+X_{n_{i}}}\right|=0.
\end{equation*}
It follows that (\ref{51}) holds.
\par \textbf{Step 5. Passage to the limit.}
\par
From (\ref{51}), it follows that $u_{n_{i}}$ satisfies the results of (\ref{43})(\ref{44})(\ref{45})(\ref{46}), we can extract from $u_{n_{i}}$ a subsequence still denoted with the same fashion and a function $u$ such that
\begin{equation*}
\begin{array}{l}
\begin{array}{llll}
u_{n_{i}}\rightarrow u~~{\rm{weakly}}~\ast~{\rm{in}}~~L^{p}(\Omega,L^{\infty}(0,T;L^{2}(I))),
\\u_{n_{i}}\rightarrow u~~{\rm{weakly~in}}~~L^{p}(\Omega,L^{2}(0,T;H^{1}(I))),
\\u_{n_{i}}\rightarrow u~~{\rm{weakly~in}}~~L^{4}(\Omega,L^{4}(0,T;L^{4}(I))),
\\u_{n_{i}}\rightarrow u~~{\rm{strongly~in}}~~L^{2}(0,T;L^{2}(I))~~P-a.s.
\end{array}
\end{array}
\end{equation*}
By Vitali's convergence theorem, we have
\begin{equation*}
\begin{array}{l}
\begin{array}{llll}
u_{n_{i}}\rightarrow u~~{\rm{strongly~in}}~~L^{2}(\Omega,L^{2}(0,T;L^{2}(I))).
\end{array}
\end{array}
\end{equation*}
It follows from these facts that we can extract again from $u_{n_{i}}$ a subsequence still denoted by the same symbols such that
\begin{eqnarray}
\label{52}&&u_{n_{i}}\rightarrow u~~{\rm{almost ~~everywhere~~}}dt\otimes d\mathbb{P}-~{\rm{in}}~~L^{2}(I),
\\\label{53}&&u_{n_{i}}\rightarrow u~~{\rm{almost ~~everywhere~~}}dt\otimes dx\otimes d\mathbb{P}~{\rm{in}}~~[0,T]\times I\times \Omega.
\end{eqnarray}
It follows from (\ref{53}) that for any $t\in [0,T],$
\begin{equation}
\label{56}u_{n_{i}}\rightarrow u~~{\rm{almost ~~everywhere~~}}dt\otimes dx\otimes d\mathbb{P}~{\rm{in}}~~[0,t]\times I\times \Omega.
\end{equation}

\par
Since $u_{n_{i}}$ is bounded in $L^{4}(\Omega,L^{4}(0,T;L^{4}(I)))$, we have
$u^{3}_{n_{i}}$ is bounded in $L^{\frac{4}{3}}([0,T]\times I\times \Omega),$
Combining this and (\ref{56}), we deduce that
\begin{equation}
\label{}u_{n_{i}}^{3}\rightarrow u^{3}~~{\rm{weakly~in}}~~L^{\frac{4}{3}}([0,T]\times I\times \Omega).
\end{equation}
\par
By (\ref{52}), the continuity of $g,$ and the applicability of Vitali's convergence theorem we have
\begin{equation}
\displaystyle P_{n_{i}}g(u_{n_{i}})\rightarrow g(u)~~{\rm{strongly~in}}~~L^{2}(\Omega,L^{2}(0,T;L^{2}(I))).
\end{equation}
\par
By the idea in \cite[P284]{B2} and \cite[P922]{S2}, we can know
\begin{equation}
\label{54}\int_{0}^{t}P_{n_{i}}g(u_{n_{i}})dB_{n_{i}}\rightarrow \int_{0}^{t}g(u)dB~~{\rm{weakly~in}}~L^{2}(\Omega,L^{2}(I))
\end{equation}
for any $t\in [0,T]$.
\par
As $$u_{n_{i}}\rightarrow u~~{\rm{weakly~in}}~~L^{p}(\Omega,L^{2}(0,T;H^{1}(I))),$$
then
\begin{equation}\label{55}
u_{n_{i}xx}\rightarrow u_{xx}~~{\rm{weakly~in}}~~L^{2}(\Omega,L^{2}(0,T;H^{-1}(I))).
\end{equation}
\par
Collecting all the convergence results (\ref{52})-(\ref{55}), we deduce that
$(u,B)$ verifies the following equation $dt\otimes d\mathbb{P}-$almost everywhere:
\begin{equation*}
\begin{array}{l}
\begin{array}{llll}
[(u(t),\varphi)+\varepsilon(u_{x}(t),\varphi_{x})]-[(u_{0},\varphi)+\varepsilon(u_{0x},\varphi_{x})]+\int_{0}^{t}((u_{x},\varphi_{x})+(u^{3}-u,\varphi))ds
\\~~~~~~~~~~~~~~~~~=\int_{0}^{t}(g(u),\varphi)dB
\end{array}
\end{array}
\end{equation*}
for all $\varphi \in H^{1}_{0}(I).$
\par
Estimates (\ref{33})-(\ref{35}) follow from passing to the limits in (\ref{44}), (\ref{45}) and (\ref{46}).

\section{Proof of Theorem \ref{Th4} }
This section is motivated by \cite{S4}.
\par
It follows from Theorem \ref{Th3} that there exists a
sequence of weak martingale solutions
$$\{(\Omega^{\varepsilon},\mathcal{F}^{\varepsilon},\mathbb{P}^{\varepsilon}),(\mathcal{F}^{\varepsilon}_{t})_{0\leq t\leq T},u^{\varepsilon},B^{\varepsilon}\}$$
satisfy the inequalities
\begin{equation}\label{59}
\begin{array}{l}
\begin{array}{llll}
\mathbb{E} \sup\limits_{0\leq t\leq T}(\|u^{\varepsilon}(t)\|_{L^{2}(I)}^{2}+\varepsilon\|u_{x}^{\varepsilon}(t)\|_{L^{2}(I)}^{2})^{\frac{p}{2}}\leq C(p,T),
\\\mathbb{E}\left(\int_{0}^{T}(\|u_{x}^{\varepsilon}(t)\|_{L^{2}(I)}^{2}+\|u^{\varepsilon}\|_{L^{4}(I)}^{4})dt\right)^{\frac{p}{2}}\leq C(p,T),
\\\mathbb{E} \sup\limits_{0\leq |\theta|\leq \delta\leq 1} \int_{0}^{T}\|u^{\varepsilon}(t+\theta)-u^{\varepsilon}(t)\|_{H^{-1}(I)}^{2}dt\leq C(p,T)\delta,
\end{array}
\end{array}
\end{equation}
where $C(p,T)$ is a constant independent of $\varepsilon.$
\par
By the same way as in \cite[P919]{S2} and \cite[P2237]{S4}, according to the priori estimates (\ref{59}), we obtain that
\begin{lemma}
For any $1\leq p<\infty$ and for any sequences $\mu_{m},\nu_{m}$ converging to $0$ such that the series $\sum\limits_{m=1}^{\infty}\frac{\mu_{m}^{\frac{1}{2}}}{\nu_{m}}$ converges, $\{u^{\varepsilon}\}_{0<\varepsilon<1}$ is bounded in $X_{p,\mu_{m},\nu_{m}}^{1}$ (the explicit definition of the space $X_{p,\mu_{m},\nu_{m}}^{1}$ can be found in Section 2) for any $m.$
\end{lemma}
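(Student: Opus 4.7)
The plan is to bound each of the three summands appearing in the definition of $\|\cdot\|_{X^{1}_{p,\mu_m,\nu_m}}$ separately, and show that each bound is independent of $\varepsilon.$ The first two are essentially already in the estimates (\ref{59}). Indeed, the first estimate in (\ref{59}) gives $\mathbb{E}\sup_{0\leq t\leq T}\|u^{\varepsilon}(t)\|_{L^{2}(I)}^{2p}\leq C(p,T),$ controlling the first summand; the second estimate in (\ref{59}) yields $\mathbb{E}(\int_{0}^{T}\|u^{\varepsilon}_{x}(t)\|_{L^{2}(I)}^{2}dt)^{p/2}\leq C(p,T),$ and combined with the Poincar\'e inequality (to replace the $H^{1}$ norm by its $L^{2}$-gradient part plus the already bounded $L^{2}$ term) this controls the second summand.

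The real work is in the third summand. The key observation is that one may pass from $\sup_{m}$ to $\sum_{m}$ at the cost of a harmless constant. Specifically, since $x\mapsto x^{1/2}$ is concave, Jensen's inequality gives
\begin{equation*}
\mathbb{E}\sup_{m}\frac{1}{\nu_{m}}\Bigl(\sup_{|\theta|\leq \mu_{m}}\int_{0}^{T-\theta}\|u^{\varepsilon}(t+\theta)-u^{\varepsilon}(t)\|_{H^{-1}(I)}^{2}dt\Bigr)^{1/2}\leq \sum_{m=1}^{\infty}\frac{1}{\nu_{m}}\Bigl(\mathbb{E}\sup_{|\theta|\leq \mu_{m}}\int_{0}^{T-\theta}\|u^{\varepsilon}(t+\theta)-u^{\varepsilon}(t)\|_{H^{-1}(I)}^{2}dt\Bigr)^{1/2}.
\end{equation*}
Now I apply the third inequality of (\ref{59}) with $\delta=\mu_{m}\leq 1$ (which may be assumed from some index on by the hypothesis $\mu_{m}\to 0$, with finitely many earlier terms absorbed into the constant), obtaining
\begin{equation*}
\mathbb{E}\sup_{|\theta|\leq \mu_{m}}\int_{0}^{T-\theta}\|u^{\varepsilon}(t+\theta)-u^{\varepsilon}(t)\|_{H^{-1}(I)}^{2}dt\leq C(p,T)\,\mu_{m}.
\end{equation*}
Taking square roots and dividing by $\nu_{m}$ therefore gives an upper bound of $C(p,T)^{1/2}\sum_{m}\mu_{m}^{1/2}/\nu_{m},$ which is finite by the assumption that this series converges, and is independent of $\varepsilon.$

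The main obstacle to anticipate is simply keeping track that every constant introduced along the way is genuinely independent of $\varepsilon$; this is already guaranteed because all three estimates in (\ref{59}) are uniform in $\varepsilon,$ and the convergence of $\sum \mu_{m}^{1/2}/\nu_{m}$ is a hypothesis on the sequences that does not involve $\varepsilon.$ The only subtlety is the use of Jensen's inequality to bring the expectation inside the square root, which is what allows the summability hypothesis on $\mu_{m}^{1/2}/\nu_{m}$ (rather than a stronger hypothesis on $\mu_{m}/\nu_{m}^{2}$) to suffice. Combining the three bounds yields the desired uniform boundedness of $\{u^{\varepsilon}\}_{0<\varepsilon<1}$ in $X^{1}_{p,\mu_{m},\nu_{m}}.$
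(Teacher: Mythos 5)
Your proof is correct and follows exactly the route the paper intends: the paper itself only cites the a priori estimates (\ref{59}) together with the argument of Sango's papers, and your derivation (first two summands directly from (\ref{59}) plus Poincar\'e, third summand via $\sup_{m}\leq\sum_{m}$, Jensen for $x\mapsto x^{1/2}$, and the time-increment estimate with $\delta=\mu_{m}$, which is precisely what the hypothesis $\sum_{m}\mu_{m}^{1/2}/\nu_{m}<\infty$ is designed for) is the standard argument being referenced. No gaps.
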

\par
Let
$$X= C([0,T];\mathbb{R}^{1}) \times L^{2}(0,T;L^{2}(I))$$
and $\mathcal{B}(X)$ be the $\sigma-$algebra of the Borel sets of $X.$
\par
For each $\varepsilon,$ let $\Phi_{\varepsilon}$ be the map
\begin{equation*}
\begin{array}{l}
\begin{array}{llll}
\Phi_{\varepsilon}:~~\Omega^{\varepsilon}~~~~\rightarrow ~~~~~~~~~~~~~~~X
\\~~~~~~~\omega~~~~\rightarrow~~~~~ (B^{\varepsilon}(\omega),u^{\varepsilon}(\omega)),
\end{array}
\end{array}
\end{equation*}
and $\Pi_{\varepsilon}$ be a probability measure on $(X,\mathcal{B}(X))$ defined by
$$\Pi_{\varepsilon}(A)=\mathbb{P}^{\varepsilon}(\Phi^{-1}_{\varepsilon}(A)), A\in \mathcal{B}(X).$$
\begin{proposition}
The family of probability measures $\{\Pi_{\varepsilon}:\varepsilon\in [0,1]\}$ is tight in $X.$
\end{proposition}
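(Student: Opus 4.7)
The plan is to follow the same blueprint as Proposition \ref{Pro4}, where tightness of the laws $\{\Pi_n\}$ of the Galerkin solutions was established, and replace the role of the uniform-in-$n$ estimates by the uniform-in-$\varepsilon$ estimates (\ref{59}). Because $X$ is a product space, it suffices to construct, for each $\rho>0$, a compact set $\Sigma_\rho\subset C([0,T];\mathbb{R}^1)$ and a compact set $Y_\rho\subset L^2(0,T;L^2(I))$ such that
\[
\mathbb{P}^\varepsilon\!\left(B^\varepsilon\notin\Sigma_\rho\right)\leq\tfrac{\rho}{2},\qquad \mathbb{P}^\varepsilon\!\left(u^\varepsilon\notin Y_\rho\right)\leq\tfrac{\rho}{2},
\]
uniformly in $\varepsilon\in[0,1]$; then $\Pi_\varepsilon(\Sigma_\rho\times Y_\rho)\geq 1-\rho$ for every $\varepsilon$.

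For the Wiener process component I would use exactly the argument in Proposition \ref{Pro4}: since each $B^\varepsilon$ is a standard real Brownian motion, the moments $\mathbb{E}^{\varepsilon}|B^\varepsilon(t_2)-B^\varepsilon(t_1)|^{2i}=(2i-1)!(t_2-t_1)^i$ do not depend on $\varepsilon$, so the Hölder-type compact set
\[
\Sigma_\rho=\Big\{B\in C([0,T];\mathbb{R}^1):\sup_{n}\sup_{|t_2-t_1|\leq n^{-6}} n|B(t_2)-B(t_1)|\leq L_\rho\Big\}
\]
works for all $\varepsilon$ simultaneously, once $L_\rho$ is chosen via the Chebyshev inequality applied to the fourth moment, exactly as before.

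For the $u^\varepsilon$ component I would invoke the preceding lemma, which (through the uniform estimates in (\ref{59})) asserts that $\{u^\varepsilon\}_{0<\varepsilon<1}$ is bounded in $X^1_{p,\mu_m,\nu_m}$ whenever $\mu_m,\nu_m\downarrow 0$ with $\sum_m \mu_m^{1/2}/\nu_m<\infty$. Let $Y_\rho^1$ be the closed ball of radius $M_\rho$ in the deterministic space $Y^1_{\mu_m,\nu_m}$. By Corollary \ref{c1} (with $X=H^1(I)$, $B=L^2(I)$, $Y=H^{-1}(I)$), this ball is compact in $L^2(0,T;L^2(I))$. Then Chebyshev's inequality yields
\[
\mathbb{P}^\varepsilon\!\left(u^\varepsilon\notin Y_\rho^1\right)\leq \mathbb{P}^\varepsilon\!\left(\|u^\varepsilon\|_{Y^1_{\mu_m,\nu_m}}>M_\rho\right)\leq \frac{1}{M_\rho}\mathbb{E}^\varepsilon\|u^\varepsilon\|_{Y^1_{\mu_m,\nu_m}}\leq \frac{C}{M_\rho},
\]
with $C$ independent of $\varepsilon$ by the uniformity of the bounds in (\ref{59}); choosing $M_\rho=2C\rho^{-1}$ gives the required estimate.

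The only delicate point — and the one I would be most careful about — is verifying that the constant bounding $\mathbb{E}^\varepsilon\|u^\varepsilon\|_{Y^1_{\mu_m,\nu_m}}$ really is $\varepsilon$-independent. This amounts to checking each of the three terms in the norm of $Y^1_{\mu_m,\nu_m}$ against (\ref{59}): the $\sup_t\|u^\varepsilon\|_{L^2}$ piece uses the first estimate with $p=1$, the $L^2(0,T;H^1)$ piece uses the second estimate with $p=1$, and the time-shift piece is controlled by the third estimate of (\ref{59}) after summing $\mu_m^{1/2}/\nu_m$, using the assumed summability. All three constants are independent of $\varepsilon$ by Theorem \ref{Th3}, so the conclusion follows. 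Combining the two bounds then gives $\Pi_\varepsilon(\Sigma_\rho\times Y_\rho^1)\geq 1-\rho$ uniformly in $\varepsilon$, which is the tightness claim.
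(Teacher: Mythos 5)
Your proposal is correct and follows essentially the same route as the paper: the paper's proof likewise reuses the $\Sigma_\rho$ construction from Proposition \ref{Pro4} for the Brownian component and takes $Y_\rho^1$ to be a ball in $Y^1_{\mu_m,\nu_m}$, compact by Corollary \ref{c1}, with the Chebyshev bound made uniform in $\varepsilon$ via the estimates (\ref{59}). Your extra check that the constant bounding $\mathbb{E}^{\varepsilon}\|u^{\varepsilon}\|_{Y^1_{\mu_m,\nu_m}}$ is $\varepsilon$-independent is exactly the point the paper delegates to the preceding lemma.
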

\begin{proof}
\par We use the same method as in Proposition \ref{Pro4}.
\par
For any $\rho >0,$ we should find the compact subsets
\begin{equation*}
\begin{array}{l}
\begin{array}{llll}
 \Sigma_{\rho}\subset C([0,T];\mathbb{R}^{1}), Y_{\rho}^{1}\subset L^{2}(0,T;L^{2}(I)),
\end{array}
\end{array}
\end{equation*}
such that
\begin{eqnarray}
\label{57}\mathbb{P}^{\varepsilon}(\omega:B^{\varepsilon}(\omega,\cdot)\notin\Sigma_{\rho})\leq\frac{\rho}{2},
\\\label{58}\mathbb{P}^{\varepsilon}(\omega:u^{\varepsilon}(\omega,\cdot)\notin Y_{\rho}^{1})\leq\frac{\rho}{2}
.
\end{eqnarray}

Noting the formula
\begin{equation*}
\mathbb{E}^{\varepsilon}|B^{\varepsilon}(t_{2})-B^{\varepsilon}(t_{1})|^{2i}=(2i-1)!(t_{2}-t_{1})^{i},i=1,2,...
\end{equation*}
we define
\begin{equation*}
\begin{array}{l}
\begin{array}{llll}
\Sigma_{\rho}\triangleq\left\{B(\cdot)\in C([0,T];\mathbb{R}^{1}):\sup\limits_{t_{1},t_{2}\in [0,T], |t_{2}-t_{1}|\leq \frac{1}{n^{6}}}n|B(t_{2})-B(t_{1})|\leq L_{\rho}\right\},\\
Y_{\rho}^{1}=\left\{u\in Y_{\mu_{m},\nu_{m}}^{1}~|~\|u\|_{Y_{\mu_{m},\nu_{m}}^{1}}\leq M_{\rho}\right\}.
\end{array}
\end{array}
\end{equation*}
where $n\in \mathbb{N},$ $L_{\rho},M_{\rho}$ two constants depending on $\rho$ and will be chosen later.
\par
By the Chebyshev inequality and the same argument as in Proposition \ref{Pro4}, we get
\begin{equation*}
\begin{array}{l}
\begin{array}{llll}
\mathbb{P}^{\varepsilon}(\omega:B^{\varepsilon}(\omega,\cdot)\notin\Sigma_{\rho})
\leq\frac{C}{L_{\rho}^{4}}\sum\limits_{n=1}^{\infty}\frac{1}{n^{2}},
\\
\mathbb{P}^{\varepsilon}(\omega:u^{\varepsilon}(\omega,\cdot)\notin Y_{\rho}^{1})\leq \frac{C}{M_{\rho}},
\end{array}
\end{array}
\end{equation*}
we choose $L_{\rho}^{4}=2C\rho^{-1}\sum\limits_{n=1}^{\infty}\frac{1}{n^{2}}, M_{\rho}=2C\rho^{-1},$ to get (\ref{57}) and (\ref{58}).
\par
It follows from (\ref{57}) and (\ref{58}) that
$$\Pi_{\varepsilon}(\Sigma_{\rho}\times Y_{\rho}^{1})\geq 1-\rho, $$
for any $\varepsilon\in[0,1].$
\par
Thus, the family of probability measures $\{\Pi_{\varepsilon}:\varepsilon\in[0,1]\}$ is tight in $X.$
\end{proof}

\par
From the tightness of $\{\Pi_{\varepsilon}:\varepsilon\in[0,1]\}$ in the Polish space $X$ and Prokhorov¡¯s theorem, we
infer the existence of a subsequence $\Pi_{\varepsilon_{i}}$ of probability measures and a probability measure $\Pi$
such that $\Pi_{\varepsilon_{i}}\rightharpoonup\Pi$ weakly as $i\rightarrow\infty.$
\par
By Lemma \ref{L5}, there exists a probability space $(\Omega,\mathcal{F},\mathbb{P})$ and random variables $(\tilde{u}^{\varepsilon_{i}},\tilde{B}^{\varepsilon_{i}}),$  $(u,B)$ on $(\Omega,\mathcal{F},\mathbb{P})$ with values in $X$ such that
\begin{equation*}
\begin{array}{l}
\begin{array}{llll}
\mathcal{L}(\tilde{u}^{\varepsilon_{i}},\tilde{B}^{\varepsilon_{i}})=\Pi_{\varepsilon_{i}},~~\mathcal{L}(u,B)=\Pi,
\\(\tilde{u}^{\varepsilon_{i}},\tilde{B}^{\varepsilon_{i}})\rightarrow (u,B)~~{\rm{in}}~~X~~P-a.s.
\end{array}
\end{array}
\end{equation*}
By the same argument as in (\ref{51}), we have
\begin{equation}\label{60}
\begin{array}{l}
\begin{array}{llll}
[(\tilde{u}^{\varepsilon_{i}}(t),\varphi)+\varepsilon(\tilde{u}^{\varepsilon_{i}}_{x}(t),\varphi_{x})]-[(\tilde{u}^{\varepsilon_{i}}_{0},\varphi)
+\varepsilon(\tilde{u}^{\varepsilon_{i}}_{0x},\varphi_{x})]+\int_{0}^{t}((\tilde{u}^{\varepsilon_{i}}_{x},\varphi_{x})+(\tilde{u}^{\varepsilon_{i}3}-\tilde{u}^{\varepsilon_{i}},\varphi))ds
\\~~~~~~~~~~~~~~~~~=\int_{0}^{t}(g(\tilde{u}^{\varepsilon_{i}}),\varphi)d\tilde{B}^{\varepsilon_{i}}
\end{array}
\end{array}
\end{equation}
for all $\varphi \in H^{1}_{0}(I).$
\par
From (\ref{60}), it follows that $\tilde{u}^{\varepsilon_{i}}$ satisfies the results of (\ref{43})(\ref{44})(\ref{45})(\ref{46}), we can extract from $\tilde{u}^{\varepsilon_{i}}$ a subsequence still denoted with the same fashion and a function $u$ such that
\begin{equation*}
\begin{array}{l}
\begin{array}{llll}
\tilde{u}^{\varepsilon_{i}}\rightarrow u~~{\rm{weakly}}~\ast~{\rm{in}}~~L^{p}(\Omega,L^{\infty}(0,T;L^{2}(I))),
\\\tilde{u}^{\varepsilon_{i}}\rightarrow u~~{\rm{weakly~in}}~~L^{p}(\Omega,L^{2}(0,T;H^{1}(I))),
\\\tilde{u}^{\varepsilon_{i}}\rightarrow u~~{\rm{weakly~in}}~~L^{4}(\Omega,L^{4}(0,T;L^{4}(I))),
\\\tilde{u}^{\varepsilon_{i}}\rightarrow u~~{\rm{strongly~in}}~~L^{2}(0,T;L^{2}(I))~~P-a.s.
\end{array}
\end{array}
\end{equation*}
By Vitali's convergence theorem, we have
\begin{equation*}
\begin{array}{l}
\begin{array}{llll}
\lim\limits_{i\rightarrow\infty}\mathbb{E}\|\tilde{u}^{\varepsilon_{i}}-u\|_{L^{2}(0,T;L^{2}(I))}^{2}=0,
\end{array}
\end{array}
\end{equation*}
according to this equality, Theorem \ref{Th1}, \cite[P284]{B2}, \cite[P1126,Lemma 2.1]{D1} and \cite[P151,Lemma 3.1]{G1}, it is easy to see that for any $\delta>0,$ we have
\begin{equation*}
\begin{array}{l}
\begin{array}{llll}
\lim\limits_{i\rightarrow\infty}\mathbb{P}(\|(\tilde{u}^{\varepsilon_{i}}(t),\varphi)-(u(t),\varphi)\|_{L^{2}(0,T)}>\delta)=0,
\\
\lim\limits_{i\rightarrow\infty}\mathbb{P}(\|\int_{0}^{t}( \tilde{u}^{\varepsilon_{i}}_{x}(s),\varphi_{x})ds-\int_{0}^{t}( u_{x}(s),\varphi_{x})ds\|_{L^{2}(0,T)}>\delta)=0,
\\
\lim\limits_{i\rightarrow\infty}\mathbb{P}(\|\int_{0}^{t}(\tilde{u}^{\varepsilon_{i}3}-\tilde{u}^{\varepsilon_{i}},\varphi)ds-\int_{0}^{t}(u^{3}-u,\varphi)ds\|_{L^{2}(0,T)}>\delta)=0,
\\
\lim\limits_{i\rightarrow\infty}\mathbb{P}(\|\int_{0}^{t}(g(\tilde{u}^{\varepsilon_{i}}),\varphi)d\tilde{B}^{\varepsilon_{i}}(s)-\int_{0}^{t}(g(u),\varphi)dB(s)\|_{L^{2}(0,T)}>\delta)=0.
\end{array}
\end{array}
\end{equation*}
\par
It follows from
\begin{equation}
\begin{array}{l}
\begin{array}{llll}
\mathbb{E}\sup\limits_{0\leq t\leq T}|\varepsilon_{i}(\tilde{u}^{\varepsilon_{i}}_{x}(t),\varphi_{x})|^{2}
\\\leq \mathbb{E}\sup\limits_{0\leq t\leq T}\varepsilon_{i}^{2}\|\tilde{u}^{\varepsilon_{i}}_{x}(t)\|_{L^{2}(I)}^{2}\|\varphi_{x}\|_{L^{2}(I)}^{2}
\\\leq \varepsilon_{i}\|\varphi_{x}\|_{L^{2}(I)}^{2}\mathbb{E}\sup\limits_{0\leq t\leq T}\varepsilon_{i}\|\tilde{u}^{\varepsilon_{i}}_{x}(t)\|_{L^{2}(I)}^{2}
\end{array}
\end{array}
\end{equation}
that
\begin{equation}
\begin{array}{l}
\begin{array}{llll}
\lim\limits_{i\rightarrow\infty}\mathbb{E}\sup\limits_{0\leq t\leq T}|\varepsilon_{i}(\tilde{u}^{\varepsilon_{i}}_{x}(t),\varphi_{x})|^{2}=0.
\end{array}
\end{array}
\end{equation}
\par
By taking the limit in probability as $i$ goes to infinity in (\ref{60}), we deduce that
$(u,B)$ verifies the following equation $dt\otimes d\mathbb{P}-$almost everywhere:
\begin{equation}
\begin{array}{l}
\begin{array}{llll}
(u(t),\varphi)-(u_{0},\varphi)+\int_{0}^{t}((u_{x},\varphi_{x})+(u^{3}-u,\varphi))ds=\int_{0}^{t}(g(u),\varphi)dB
\end{array}
\end{array}
\end{equation}
for all $\varphi \in H^{1}_{0}(I).$
Namely, $\{(\Omega,\mathcal{F},\mathbb{P}),(\mathcal{F}_{t})_{0\leq t\leq T},u,B\}$ is a weak martingale solution of problem (\ref{61}).

\section{Proof of Theorem \ref{Th1}}
\par
If there is no danger of confusion, we shall omit the subscript $\varepsilon,$ we use $u$ instead of $u^{\varepsilon}$ and $v$ instead of $v^{\varepsilon}.$
\par
The proof is divided into several steps.
\subsection{ Local existence}.
\par
Based on Proposition \ref{Pro1}, we can obtain the following result.
\begin{proposition}\label{Pro2}
For any $\varepsilon\in [0,\frac{1}{2}], T>0.$ If
\begin{equation*}
\begin{array}{l}
\begin{array}{llll}
u_{0}\in L^{2}(\Omega;H^{2}(I)\cap H^{1}_{0}(I)),\\
\|f(u_{1})-f(u_{2})\|_{L^{2}(I)}\leq L\|u_{1}-u_{2}\|_{H^{1}(I)},\\
\|f(u)\|_{L^{2}(I)}\leq L(1+\|u\|_{H^{1}(I)}),
\end{array}
\end{array}
\end{equation*}
then equation
\begin{equation}\label{9}
\begin{array}{l}
\left\{
\begin{array}{llll}
d(u^{\varepsilon}-\varepsilon u^{\varepsilon}_{xx})+(-u^{\varepsilon}_{xx}+f(u^{\varepsilon}))dt=g(u^{\varepsilon})dB
\\u^{\varepsilon}(0,t)=0=u^{\varepsilon}(1,t)
\\u^{\varepsilon}(0)=u_{0}

\end{array}
\right.
\end{array}
\begin{array}{lll}
{\rm{in}}~I\times(0,T)\\
{\rm{in}}~(0,T)\\
{\rm{in}}~I,
\end{array}
\end{equation}
has a unique solution $u^{\varepsilon}\in L^{2}(\Omega;C([0,T];H^{2}(I)\cap H^{1}_{0}(I)))$
and
\begin{equation}\label{10}
\begin{array}{l}
\begin{array}{llll}
\mathbb{E} \sup\limits_{0\leq t\leq T}(\|u^{\varepsilon}_{x}(t)\|_{L^{2}(I)}^{2}+\varepsilon\|u^{\varepsilon}_{xx}(t)\|_{L^{2}(I)}^{2})
+\mathbb{E} \int_{0}^{T}\|u^{\varepsilon}_{xx}(t)\|_{L^{2}(I)}^{2}dt
\\\leq C\mathbb{E}(\| u_{0x}\|_{L^{2}(I)}^{2}+\|u_{0xx}\|_{L^{2}(I)}^{2}),
\end{array}
\end{array}
\end{equation}
where $C=C(L,T,I).$
\end{proposition}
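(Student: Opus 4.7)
The plan is to solve (\ref{9}) by a Picard iteration built on the linear theory of Proposition \ref{Pro1}(3): at each step one freezes the nonlinear coefficients and solves a linear stochastic nonclassical diffusion equation. The iteration will be shown to be a strict contraction in the weaker space $L^{2}(\Omega;C([0,T_{1}];H^{1}_{0}(I)))$ on a short interval $[0,T_{1}]$; the target regularity $H^{2}(I)\cap H^{1}_{0}(I)$ and the estimate (\ref{10}) will then be recovered \emph{a posteriori} by inserting the obtained solution back into Proposition \ref{Pro1}(3) and closing a Gronwall argument.

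\textbf{Local existence.} I would set $u^{(0)}\equiv u_{0}$ and define $u^{(n+1)}$ as the unique solution of the linear problem obtained from (\ref{9}) by replacing $f(u^{\varepsilon})$ by $f(u^{(n)})$ and $g(u^{\varepsilon})$ by $g(u^{(n)})$. By the hypotheses on $f$ and assumption (B) on $g$, one has $f(u^{(n)})\in L^{2}(\Omega;L^{2}(0,T;L^{2}(I)))$ and $g(u^{(n)})\in L^{2}(\Omega;L^{2}(0,T;H^{1}(I)))$, so Proposition \ref{Pro1}(3) produces a well-defined iterate in $L^{2}(\Omega;C([0,T];H^{2}(I)\cap H^{1}_{0}(I)))\cap L^{2}(\Omega;L^{2}(0,T;H^{2}(I)))$. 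The difference $w^{n+1}=u^{(n+1)}-u^{(n)}$ satisfies the linear equation with zero initial data and forcings $f(u^{(n)})-f(u^{(n-1)})$, $g(u^{(n)})-g(u^{(n-1)})$; applying Proposition \ref{Pro1}(3) to $w^{n+1}$, using the Lipschitz hypotheses, Poincar\'e's inequality in $H^{1}_{0}(I)$, and the crude bound $\int_{0}^{T_{1}}\|w^{n}\|_{H^{1}(I)}^{2}dt\leq T_{1}\sup_{0\leq t\leq T_{1}}\|w^{n}\|_{H^{1}(I)}^{2}$, yields
\begin{equation*}
\mathbb{E}\sup\limits_{0\leq t\leq T_{1}}\|w^{n+1}_{x}(t)\|_{L^{2}(I)}^{2}\leq CL^{2}T_{1}\,\mathbb{E}\sup\limits_{0\leq t\leq T_{1}}\|w^{n}_{x}(t)\|_{L^{2}(I)}^{2},
\end{equation*}
with $C$ the constant from Proposition \ref{Pro1}(3), which is independent of $\varepsilon$. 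Choosing $T_{1}$ so that $CL^{2}T_{1}<\tfrac{1}{2}$ makes the scheme a strict contraction and produces a limit $u^{\varepsilon}\in L^{2}(\Omega;C([0,T_{1}];H^{1}_{0}(I)))$ solving (\ref{9}) on $[0,T_{1}]$, the passage to the limit inside the nonlinear terms being justified by the Lipschitz continuity of $f$ and $g$.

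\textbf{Regularity, global extension and uniqueness.} To upgrade the regularity of $u^{\varepsilon}$, I apply Proposition \ref{Pro1}(3) directly to (\ref{9}) with $f(u^{\varepsilon}),g(u^{\varepsilon})$ treated as known right-hand sides; using the linear growth hypothesis on $f$ and assumption (B) on $g$ together with Poincar\'e's inequality gives
\begin{equation*}
\mathbb{E}\int_{0}^{t}\|f(u^{\varepsilon})\|_{L^{2}(I)}^{2}ds+\mathbb{E}\int_{0}^{t}\|g(u^{\varepsilon})\|_{H^{1}(I)}^{2}ds\leq CL^{2}\int_{0}^{t}\bigl(1+\mathbb{E}\sup\limits_{0\leq r\leq s}\|u^{\varepsilon}_{x}(r)\|_{L^{2}(I)}^{2}\bigr)ds,
\end{equation*}
and Gronwall's inequality then delivers (\ref{10}) on $[0,T_{1}]$ with a constant depending only on $L,T,I$ and $\mathbb{E}(\|u_{0x}\|_{L^{2}(I)}^{2}+\|u_{0xx}\|_{L^{2}(I)}^{2})$. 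Since $T_{1}$ depends only on $L$ and on the universal constant in Proposition \ref{Pro1}(3), the local solution is extended by concatenation to the whole interval $[0,T]$ in finitely many steps, and the Gronwall step propagates (\ref{10}) to $[0,T]$. Uniqueness follows from the same linear estimate applied to the difference of two hypothetical solutions. The main obstacle is the topology mismatch: the Lipschitz bound on $f$ maps $H^{1}(I)$ only into $L^{2}(I)$, so a direct contraction in $L^{2}(\Omega;C([0,T];H^{2}(I)\cap H^{1}_{0}(I)))$ is not available; the two-step device of contracting in the weaker $H^{1}_{0}$ norm and then upgrading the regularity by re-applying Proposition \ref{Pro1}(3) is the key technical point, and the $\varepsilon$-uniformity of the constants in Proposition \ref{Pro1}(3) is essential to ensure that $T_{1}$ does not shrink as $\varepsilon\to 0$.
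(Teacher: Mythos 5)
Your proposal is correct and follows essentially the same route as the paper: a Picard iteration built on Proposition \ref{Pro1}(3), with the Lipschitz hypotheses on $f$ and $g$ closing the fixed-point argument in the $\sup_t\|\cdot_x\|_{L^2(I)}$ norm, and with the bound (\ref{10}) and uniqueness recovered by re-applying Proposition \ref{Pro1}(3) to the equation (resp. to the difference of two solutions) and invoking Gronwall. The only real difference is cosmetic: instead of a small-time contraction with $CL^2T_1<\tfrac12$ followed by concatenation, the paper iterates the integral inequality $Q_n(t)\leq CL^2\int_0^t Q_{n-1}(s)\,ds$ for $Q_n(t)=\mathbb{E}\sup_{s\leq t}(\|(u_{n+1}-u_n)_x(s)\|_{L^2(I)}^2+\varepsilon\|(u_{n+1}-u_n)_{xx}(s)\|_{L^2(I)}^2)$ to obtain the factorial bound $Q_n(T)\leq C_0(CL^2T)^n/n!$, whose square roots are summable, so the iterates are Cauchy on all of $[0,T]$ in one step.
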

\begin{proof}
The main idea in this part comes from \cite{K1}.
\par
We set
\begin{equation*}
\begin{array}{l}
\begin{array}{llll}
u_{0}(t)=u_{0},
\end{array}
\end{array}
\end{equation*}
$u_{n+1}(t)$ is the solution of
\begin{equation}
\begin{array}{l}
\left\{
\begin{array}{llll}
d(u-\varepsilon u_{xx})+(-u_{xx}+f(u_{n}(t)))dt=g(u_{n}(t))dB
\\u(0,t)=0=u(1,t)
\\u(0)=u_{0}

\end{array}
\right.
\end{array}
\begin{array}{lll}
{\rm{in}}~I\times(0,T)\\
{\rm{in}}~(0,T)\\
{\rm{in}}~I.
\end{array}
\end{equation}
Then,
\begin{equation}
\begin{array}{l}
\left\{
\begin{array}{llll}
d(u_{n+1}-u_{n}-\varepsilon (u_{n+1}-u_{n})_{xx})
\\~~~~~~+(-(u_{n+1}-u_{n})_{xx}+f(u_{n}(t))-f(u_{n-1}(t)))dt=(g(u_{n}(t))-g(u_{n-1}(t)))dB
\\(u_{n+1}-u_{n})(0,t)=0=(u_{n+1}-u_{n})(1,t)
\\(u_{n+1}-u_{n})(0)=0

\end{array}
\right.
\end{array}
\begin{array}{lll}
{\rm{in}}~I\times(0,T)\\
{\rm{in}}~(0,T)\\
{\rm{in}}~I,
\end{array}
\end{equation}
It follows from (\ref{8}) that
\begin{equation}
\begin{array}{l}
\begin{array}{llll}
\mathbb{E} \sup\limits_{0\leq s\leq t}(\|(u_{n+1}-u_{n})_{x}(s)\|_{L^{2}(I)}^{2}+\varepsilon\|(u_{n+1}-u_{n})(s)_{xx}\|_{L^{2}(I)}^{2})
\\+\mathbb{E} \int_{0}^{t}\| (u_{n+1}-u_{n})_{xx}(s)\|_{L^{2}(I)}^{2}ds
\\\leq C[\mathbb{E} \int_{0}^{t}\|f(u_{n}(t))-f(u_{n-1}(s))\|_{L^{2}(I)}^{2}ds+\mathbb{E} \int_{0}^{t}\|(g(u_{n}(s))-g(u_{n-1}(s)))\|_{H^{1}(I)}^{2}ds]
\\\leq C[\mathbb{E} \int_{0}^{t}L^{2}\|u_{n}(t)-u_{n-1}(s)\|_{L^{2}(I)}^{2}ds+\mathbb{E} \int_{0}^{t}L^{2}\|u_{n}(s)-u_{n-1}(s)\|_{H^{1}(I)}^{2}ds]
\\\leq C L^{2}\mathbb{E}\int_{0}^{t}\sup\limits_{0\leq \tau\leq s}\|u_{n}(\tau)-u_{n-1}(\tau)\|_{H^{1}(I)}^{2}ds
\\\leq C L^{2}\mathbb{E}\int_{0}^{t}\sup\limits_{0\leq \tau\leq s}\|(u_{n}-u_{n-1})_{x}(\tau)\|_{L^{2}(I)}^{2}ds
\\\leq C L^{2}\mathbb{E} \int_{0}^{t}\sup\limits_{0\leq \tau\leq s}(\|(u_{n}-u_{n-1})_{x}(\tau)\|_{L^{2}(I)}^{2}+\varepsilon\| (u_{n}-u_{n-1})_{xx}(\tau)\|_{L^{2}(I)}^{2})ds.

\end{array}
\end{array}
\end{equation}
We define
\begin{equation}
\begin{array}{l}
\begin{array}{llll}
Q_{n}(t)=\mathbb{E} \sup\limits_{0\leq s\leq t}(\|(u_{n+1}-u_{n})_{x}(s)\|_{L^{2}(I)}^{2}+\varepsilon\|(u_{n+1}-u_{n})_{xx}(s)\|_{L^{2}(I)}^{2}),

\end{array}
\end{array}
\end{equation}
then, we have
\begin{equation}
\begin{array}{l}
\begin{array}{llll}
Q_{n}(t)\leq C L^{2}\int_{0}^{t}Q_{n-1}(s)ds.

\end{array}
\end{array}
\end{equation}
It is easy to see that
\begin{equation}
\begin{array}{l}
\begin{array}{llll}
Q_{1}(t)\leq C_{0},\\
Q_{n}(t)\leq \frac{C_{0}C^{n} L^{2n}}{n!}t^{n},

\end{array}
\end{array}
\end{equation}
which yields
\begin{equation}
\begin{array}{l}
\begin{array}{llll}
\sum_{n=1}^{+\infty}\sqrt{Q_{n}(T)}<+\infty.
\end{array}
\end{array}
\end{equation}
Consequently, $\{u_{n}\}_{n=1}^{+\infty}$ is a Cauchy sequence in $L^{2}(\Omega,C([0,T];H^{2}(I)))$. Then it is
easy to see that the limit gives a solution of (\ref{9}).
\par
According to Proposition \ref{Pro1} (3), we have
\begin{equation*}
\begin{array}{l}
\begin{array}{llll}
\mathbb{E} \sup\limits_{0\leq s\leq t}(\|u_{x}(t)\|_{L^{2}(I)}^{2}+\varepsilon\|u_{xx}(t)\|_{L^{2}(I)}^{2})
+\mathbb{E} \int_{0}^{t}\|u^{\varepsilon}_{xx}(t)\|_{L^{2}(I)}^{2}ds
\\\leq C[\mathbb{E}(\|u_{0x}\|_{L^{2}(I)}^{2}+\|u_{0xx}\|_{L^{2}(I)}^{2})+\mathbb{E} \int_{0}^{t}\|f(u(s))\|_{L^{2}(I)}^{2}ds+\mathbb{E} \int_{0}^{t}\|g(u(s))\|_{H^{1}(I)}^{2}ds]
\\\leq C(L)[\mathbb{E}(\|u_{0x}\|_{L^{2}(I)}^{2}+\|u_{0xx}\|_{L^{2}(I)}^{2})+\mathbb{E} \int_{0}^{t}(1+\|u(s)\|_{H^{1}(I)}^{2})ds]
\\\leq C(L)[\mathbb{E}(\|u_{0x}\|_{L^{2}(I)}^{2}+\|u_{0xx}\|_{L^{2}(I)}^{2})+\mathbb{E} \int_{0}^{t}(1+\|u_{x}(s)\|_{L^{2}(I)}^{2})ds]
\\\leq C(L)[\mathbb{E}(\|u_{0x}\|_{L^{2}(I)}^{2}+\|u_{0xx}\|_{L^{2}(I)}^{2})+T+ \int_{0}^{t}\mathbb{E}\sup\limits_{0\leq \tau\leq s}\| u_{x}(\tau)\|_{L^{2}(I)}^{2}ds],
\end{array}
\end{array}
\end{equation*}
the Ironwall inequality now implies (\ref{10}).
\par
The uniqueness can also be obtained from the Ironwall inequality.
\end{proof}
\par
Let $\rho\in C^{\infty}_{0}(\mathbb{R})$ be a cut-off function such that $\rho(r)=1$ for $r\in[0,1]$ and $\rho(r)=0$ for $r\geq 2.$ For any $R>0,y\in H^{1}(I)$
and $t\in [0,T],$ we set
\begin{equation*}
\begin{array}{l}
\begin{array}{llll}
\rho_{R}(y)=\rho(\frac{\|y\|_{H^{1}(I)}}{R}),
\\f_{R}(y)=\rho_{R}(y)y^{3}.
\end{array}
\end{array}
\end{equation*}
It is easy to see
\begin{equation*}
\begin{array}{l}
\begin{array}{llll}
\|f_{R}(y_{1})-f_{R}(y_{2})\|_{L^{2}(I)}
\leq CR^{2}\|y_{1}-y_{2}\|_{H^{1}(I)}.
\end{array}
\end{array}
\end{equation*}
The truncated equation corresponding to (\ref{13}) is the
following stochastic partial differential equation:
\begin{equation}\label{14}
\begin{array}{l}
\left\{
\begin{array}{llll}
d(u-\varepsilon u_{xx})+(- u_{xx}+f_{R}(u)-u)dt=g(u)dB
\\u(x,t)=0
\\u(0)=u_{0}
\end{array}
\right.
\end{array}
\end{equation}
\par It follows from  Proposition \ref{Pro2} that (\ref{14})
has a unique solution $u_{R}\in L^{2}(\Omega;C([0,T];H^{2}(I)\cap H^{1}_{0}(I))).$
We define
\begin{equation*}
\tau_{R}=\inf\{t\geq 0~|~\|u_{R}(t)\|_{H^{2}(I)}\geq R\}
\end{equation*}
with the usual convention that $\inf \emptyset=+\infty.$
\par
Since the sequence
of stopping times $\tau_{R}$ is non-decreasing on $R,$ we can put
$$\tau^{*}=\lim\limits_{R\rightarrow \infty}\tau_{R}.$$
We can define a local solution to (\ref{14}) as
$$u(t)=u_{R}(t)$$
on $[0,\tau_{R}],$ which is well defined since
$$u_{R_{1}}(t)=u_{R_{2}}(t)$$
on $[0,\tau_{R_{1}}\wedge \tau_{R_{2}}].$
\par
Indeed, $u_{R_{1}}(t)-u_{R_{2}}(t)$ is the solution of
\begin{equation*}
\begin{array}{l}
\left\{
\begin{array}{llll}
d(h-\varepsilon h_{xx})+(- h_{xx}+f_{R_{1}}(u_{R_{1}})-f_{R_{2}}(u_{R_{2}})-h)dt=[g(u_{R_{1}})-g(u_{R_{2}})]dB
\\h(0,t)=0=h(1,t)
\\h(0)=0,
\end{array}
\right.
\end{array}
\end{equation*}
for $t\leq[0,\tau_{R_{1}}\wedge \tau_{R_{2}}]$ with $R_{1}\leq R_{2},$ it follows from Proposition \ref{Pro1} that
\begin{equation*}
\begin{array}{l}
\begin{array}{llll}
\mathbb{E} \sup\limits_{0\leq s\leq t}(\|h_{x}(s)\|_{L^{2}(I)}^{2}+\varepsilon\|h_{xx}(s)\|_{L^{2}(I)}^{2})
\\\leq C[\mathbb{E} \int_{0}^{t}\|f_{R_{1}}(u_{R_{1}})-f_{R_{2}}(u_{R_{2}})-h\|_{L^{2}(I)}^{2}ds+\mathbb{E} \int_{0}^{t}\|g(u_{R_{1}})-g(u_{R_{2}})\|_{H^{1}(I)}^{2}ds]
\\= C[\mathbb{E} \int_{0}^{t}\|f_{R_{2}}(u_{R_{1}})-f_{R_{2}}(u_{R_{2}})-h\|_{L^{2}(I)}^{2}ds+\mathbb{E} \int_{0}^{t}\|g(u_{R_{1}})-g(u_{R_{2}})\|_{H^{1}(I)}^{2}ds]
\\\leq \beta(t)\mathbb{E} \sup\limits_{0\leq s\leq t}\| h_{x}(s)\|_{L^{2}(I)}^{2},
\end{array}
\end{array}
\end{equation*}
where $\beta(t)$ is a continuous increasing function with
$\beta(0)= 0.$
\par
If we take $t$ sufficiently small, we have $u_{R_{1}}=u_{R_{2}}$
on $[0,t].$ Repeating the same argument in the interval
$[t, 2t]$ and so on yields $$u_{R_{1}}=u_{R_{2}}$$ in the whole interval $[0,\tau_{R_{1}}\wedge \tau_{R_{2}}]$.
\par
At the end, if $\tau^{*}<+\infty$, the definition of $u$ yields
$$\lim\limits_{t\rightarrow \tau^{*}}\|u(t)\|_{H^{2}(I)}=+\infty,$$
which shows that $u$ is a unique local solution to (\ref{14}) on the
interval $[0,\tau^{*}]$, and thus completes the proof.
\subsection{ Global existence}
\par
We will exploit an energy inequality.
\par
For any $T>0,$ set $\tau=\inf\{\tau^{*},T\}$ and $t<\tau.$
\par
\textbf{Step 1.} We first prove (\ref{15}).
\par
Set
\begin{equation*}
\begin{array}{l}
\begin{array}{llll}
v(t)=(u-\varepsilon u_{xx})(t).
\end{array}
\end{array}
\end{equation*}
It follows from It\^{o}'s rule that
\begin{equation*}
\begin{array}{l}
\begin{array}{llll}
dv^{2}=2vdv+(dv)^{2}
\\~~~~~=2(u-\varepsilon u_{xx})[( u_{xx}-u^{3}+u)dt+g(u)dB]+g^{2}(u)dt
\\~~~~~=(2u u_{xx}-2u^{4}+2u^{2}-2\varepsilon| u_{xx}|^{2}+2\varepsilon u_{xx} \cdot u^{3} -2\varepsilon u_{xx} \cdot u)dt+2vg(u)dB+g^{2}(u)dt,
\end{array}
\end{array}
\end{equation*}
namely, we have
\begin{equation*}
\begin{array}{l}
\begin{array}{llll}
\|v(t)\|_{L^{2}(I)}^{2}+\int_{0}^{t}[2(1-\varepsilon)\| u_{x}\|_{L^{2}(I)}^{2}+2\int_{I}u^{4}dx+2\varepsilon\| u_{xx}\|_{L^{2}(I)}^{2}]ds
\\=\|v(0)\|_{L^{2}(I)}^{2}+2\int_{0}^{t}\|u\|_{L^{2}(I)}^{2}ds+2\varepsilon\int_{0}^{t}\int_{I}u_{xx} \cdot u^{3}dx ds+2\int_{0}^{t}(v,g(u))dB+\int_{0}^{t}\|g(u)\|_{L^{2}(I)}^{2}dt
\\=\|v(0)\|_{L^{2}(I)}^{2}+2\int_{0}^{t}\|u\|_{L^{2}(I)}^{2}ds-6\varepsilon\int_{0}^{t}\int_{I}| u_{x}|^{2}u^{2}dxds+2\int_{0}^{t}(v,g(u))dB+\int_{0}^{t}\|g(u)\|_{L^{2}(I)}^{2}ds
\\\leq\|v(0)\|_{L^{2}(I)}^{2}+2\int_{0}^{t}\|u\|_{L^{2}(I)}^{2}ds+2\int_{0}^{t}(v,g(u))dB+\int_{0}^{t}\|g(u)\|_{L^{2}(I)}^{2}ds.
\end{array}
\end{array}
\end{equation*}
\par
After some calculation, we obtain
\begin{equation*}
\begin{array}{l}
\begin{array}{llll}
(\sup\limits_{0\leq t\leq \tau}\|v(t)\|_{L^{2}(I)}^{2}+\int_{0}^{\tau}[2(1-\varepsilon)\|u_{x}\|_{L^{2}(I)}^{2}+2\int_{I}u^{4}dx+2\varepsilon\|u_{xx}\|_{L^{2}(I)}^{2}]dt)^{p}
\\\leq C(p)[\|v(0)\|_{L^{2}(I)}^{2p}+(\int_{0}^{\tau}\|u\|_{L^{2}(I)}^{2}dt)^{p}+\sup\limits_{0\leq t\leq \tau}|\int_{0}^{t}(v,g(u))dB|^{p}+(\int_{0}^{\tau}\|g(u)\|_{L^{2}(I)}^{2}dt)^{p}],
\end{array}
\end{array}
\end{equation*}
by the Burkholder-Davis-Gundy inequality, we have
\begin{equation*}
\begin{array}{l}
\begin{array}{llll}
\mathbb{E} \sup\limits_{0\leq t\leq \tau}\|v(t)\|_{L^{2}(G)}^{2p}+\mathbb{E}(\int_{0}^{\tau}\| u_{x}\|_{L^{2}(I)}^{2}dt)^{p}+\mathbb{E}(\int_{0}^{\tau}\int_{I}u^{4}dxdt)^{p}+\mathbb{E}(\int_{0}^{\tau}\varepsilon\|u_{xx}\|_{L^{2}(I)}^{2}dt)^{p}
\\\leq C(p)[\mathbb{E}\|v(0)\|_{L^{2}(I)}^{2p}+\mathbb{E}(\int_{0}^{\tau}\|u\|_{L^{2}(I)}^{2}dt)^{p}+\mathbb{E}\sup\limits_{0\leq t\leq \tau}|\int_{0}^{t}(v,g(u))dB|^{p}+\mathbb{E}(\int_{0}^{\tau}\|g(u)\|_{L^{2}(I)}^{2}dt)^{p}]
\\\leq C(p)[\mathbb{E}\|v(0)\|_{L^{2}(I)}^{2p}+\mathbb{E}(\int_{0}^{\tau}\|u\|_{L^{2}(I)}^{2}dt)^{p}+\rho\mathbb{E} \sup\limits_{0\leq t\leq \tau}\|v(t)\|_{L^{2}(I)}^{2p}+C(\rho)\mathbb{E}(\int_{0}^{\tau}\|g(u)\|_{L^{2}(I)}^{2}dt)^{p}]
\\\leq C(p)[\mathbb{E}\|v(0)\|_{L^{2}(I)}^{2p}+\mathbb{E}(\int_{0}^{\tau}\|u\|_{L^{2}(I)}^{2}dt)^{p}+\rho\mathbb{E} \sup\limits_{0\leq t\leq \tau}\|v(t)\|_{L^{2}(I)}^{2p}+C(\rho,L)\mathbb{E}(\int_{0}^{\tau}(1+\|u\|_{L^{2}(I)}^{2})dt)^{p}]

\\\leq C(p,\rho,L,T)[1+\mathbb{E}\|v(0)\|_{L^{2}(I)}^{2p}+\mathbb{E}(\int_{0}^{\tau}\|u\|_{L^{2}(I)}^{2}dt)^{p}]+\rho C(p)\mathbb{E} \sup\limits_{0\leq t\leq \tau}\|v(t)\|_{L^{2}(I)}^{2p}

\\\leq C(p,\rho,L,T)[1+\mathbb{E}\|v(0)\|_{L^{2}(I)}^{2p}+\sigma\mathbb{E}(\int_{0}^{\tau}\int_{I}u^{4}dxdt)^{p}+C(\sigma,T)]+\rho C(p)\mathbb{E} \sup\limits_{0\leq t\leq \tau}\|v(t)\|_{L^{2}(I)}^{2p}
.
\end{array}
\end{array}
\end{equation*}
By taking $\sigma<<1,\rho<<1,$ we have
\begin{equation*}
\begin{array}{l}
\begin{array}{llll}
\mathbb{E} \sup\limits_{0\leq t\leq \tau}\|v(t)\|_{L^{2}(I)}^{2p}+\mathbb{E}(\int_{0}^{\tau}\| u_{x}\|_{L^{2}(I)}^{2}dt)^{p}+\mathbb{E}(\int_{0}^{\tau}\int_{I}u^{4}dxdt)^{p}+\mathbb{E}(\int_{0}^{\tau}\varepsilon\|u_{xx}\|_{L^{2}(I)}^{2}dt)^{p}

\\\leq C(p,\rho,L,\sigma,T)[\mathbb{E}\|v(0)\|_{L^{2}(I)}^{2p}+1]
\\\leq C(p,L,T,I,u_{0}).
\end{array}
\end{array}
\end{equation*}
By the regularity theory of elliptic equation
\begin{equation*}
\begin{array}{l}
\left\{
\begin{array}{llll}
u-\varepsilon u_{xx}=v
\\u(0,t)=0=u(1,t),

\end{array}
\right.
\end{array}
\begin{array}{lll}
{\rm{in}}~I
\\
\\
\end{array}
\end{equation*}
we have
\begin{equation*}
\begin{array}{l}
\begin{array}{llll}
\|u(t)\|_{L^{2}(I)}\leq\|v(t)\|_{L^{2}(I)},
\end{array}
\end{array}
\end{equation*}
This implies that (\ref{15}) holds.
\par
\textbf{Step 2.} We shall prove (\ref{16}).
\par
According to Gagliardo-Nirenberg inequality, we have
\begin{equation*}
\begin{array}{l}
\begin{array}{llll}
\|u\|_{L^{6}(I)}\leq C\|u\|_{H^{1}(I)}^{\frac{1}{3}}\|u\|_{L^{2}(I)}^{\frac{2}{3}}
,
\end{array}
\end{array}
\end{equation*}
thus,
\begin{equation}\label{12}
\begin{array}{l}
\begin{array}{llll}
\mathbb{E}\int_{0}^{\tau}\|u^{3}\|_{L^{2}(I)}^{2}dt
\\=\mathbb{E}\int_{0}^{\tau}\|u\|_{L^{6}(I)}^{6}dt
\\\leq C\mathbb{E}\int_{0}^{\tau}\|u\|_{H^{1}(I)}^{2}\|u\|_{L^{2}(I)}^{4}dt
\\\leq C\mathbb{E}[(\int_{0}^{\tau}\|u\|_{H^{1}(I)}^{2}dt)\cdot\sup\limits_{0\leq t\leq \tau}\|u\|_{L^{2}(I)}^{4}]
\\\leq C\mathbb{E}[(\int_{0}^{\tau}\|u_{x}\|_{L^{2}(I)}^{2}dt)\cdot\sup\limits_{0\leq t\leq \tau}\|u\|_{L^{2}(I)}^{4}]
\\\leq C[\mathbb{E}(\int_{0}^{\tau}\|u_{x}\|_{L^{2}(I)}^{2}dt)^{2}+\mathbb{E}\sup\limits_{0\leq t\leq \tau}\|u\|_{L^{2}(I)}^{8}]
.
\end{array}
\end{array}
\end{equation}
In view of (\ref{15}) and (\ref{12}), there holds that $u^{3}-u\in L^{2}(\Omega;L^{2}(0,T;L^{2}(I))),$ moreover, $g(u)\in L^{2}(\Omega;L^{2}(0,T;H^{1}(I))),$ according to Proposition \ref{Pro1} (3), we have
\begin{equation*}
\begin{array}{l}
\begin{array}{llll}
\mathbb{E} \sup\limits_{0\leq t\leq \tau}(\| u_{x}(t)\|_{L^{2}(I)}^{2}+\varepsilon\|u_{xx}(t)\|_{L^{2}(I)}^{2})+\mathbb{E} \int_{0}^{\tau}\|u_{xx}(t)\|_{L^{2}(I)}^{2}dt
\\\leq C[\mathbb{E}(\|u_{0x}\|_{L^{2}(I)}^{2}+\|u_{0 xx}\|_{L^{2}(I)}^{2})+\mathbb{E} \int_{0}^{\tau}\|(u^{3}-u)(t)\|_{L^{2}(I)}^{2}dt+\mathbb{E} \int_{0}^{\tau}\|g(u)\|_{H^{1}(I)}^{2}dt].
\end{array}
\end{array}
\end{equation*}
With the help of (\ref{15}) and (\ref{12}), one finds that
\begin{equation*}
\begin{array}{l}
\begin{array}{llll}
\mathbb{E} \sup\limits_{0\leq t\leq \tau}(\| u_{x}(t)\|_{L^{2}(I)}^{2}+\varepsilon\|u_{xx}(t)\|_{L^{2}(I)}^{2})+\mathbb{E} \int_{0}^{\tau}\|u_{xx}(t)\|_{L^{2}(I)}^{2}dt
\\\leq C[\|u_{0}\|_{H^{2}(I)}^{2}+\mathbb{E}(\int_{0}^{\tau}\|u_{x}\|_{L^{2}(I)}^{2}dt)^{2}+\mathbb{E}\sup\limits_{0\leq t\leq \tau}\|u\|_{L^{2}(I)}^{8}+\mathbb{E} \int_{0}^{\tau}\|u\|_{H^{1}(I)}^{2}dt+C(T)]

\\\leq C (u_{0},T,I).
\end{array}
\end{array}
\end{equation*}
Namely, we prove (\ref{16}).
\par
\textbf{Step 3.} We shall prove $\mathbb{P}(\{\omega\in \Omega~|~\tau^{*}(\omega)=+\infty\})=1$.
\par
Indeed, by the Chebyshev inequality, (\ref{16}) and the definition of $u,$ we have
\begin{equation*}
\begin{array}{l}
\begin{array}{llll}
\mathbb{P}(\{\omega\in \Omega| \tau^{*}(\omega)<+\infty\})
\\=\lim\limits_{T\rightarrow +\infty}\mathbb{P}(\{\omega\in \Omega| \tau^{*}(\omega)\leq T\})
\\=\lim\limits_{T\rightarrow +\infty}\mathbb{P}(\{\omega\in \Omega|  \tau(\omega)=\tau^{*}(\omega)\})
\\=\lim\limits_{T\rightarrow +\infty}\lim\limits_{R\rightarrow +\infty}\mathbb{P}(\{\omega\in \Omega|  \tau_{R}(\omega)\leq\tau(\omega)\})
\\=\lim\limits_{T\rightarrow +\infty}\lim\limits_{R\rightarrow +\infty}\mathbb{P}(\{\omega\in \Omega| \sup\limits_{0\leq t\leq \tau}\| u(t)\|_{H^{2}(I)}^{2}\geq \sup\limits_{0\leq t\leq \tau_{R}}\| u(t)\|_{H^{2}(I)}^{2}\})
\\=\lim\limits_{T\rightarrow +\infty}\lim\limits_{R\rightarrow +\infty}\mathbb{P}(\{\omega\in \Omega| \sup\limits_{0\leq t\leq \tau}\|u(t)\|_{H^{2}(I)}^{2}\geq R^{2}\})
\\\leq\lim\limits_{T\rightarrow +\infty}\lim\limits_{R\rightarrow +\infty}\frac{\mathbb{E}\sup\limits_{0\leq t\leq \tau}\|u(t)\|_{H^{2}(I)}^{2}}{R^{2}}=0,
\end{array}
\end{array}
\end{equation*}
this show that
\begin{equation*}
\begin{array}{l}
\begin{array}{llll}
\mathbb{P}(\{\omega\in \Omega| \tau^{*}(\omega)=+\infty\})=1,
\end{array}
\end{array}
\end{equation*}
namely, $\tau_{\infty}=+\infty$ P-a.s.
\section{Proof of Theorem \ref{Th2}}
\subsection{A priori estimate of $\{u^{\varepsilon}\}_{0<\varepsilon<\frac{1}{2}}$}
In this section, we will establish the following estimate
\begin{equation}\label{19}
\mathbb{E}\sup\limits_{0\leq |\theta|\leq \delta} \int_{0}^{T}\|u^{\varepsilon}(t+\theta)-u^{\varepsilon}(t)\|_{L^{2}(I)}^{2}dt\leq C\delta.
\end{equation}
\par
Establishing this estimate directly for $u^{\varepsilon}$ is very difficulty, movetived by Section 2, we should establish estimate for $v^{\varepsilon},$
then by applying the regularity theory of elliptic equation, we can obtain the estimate for $u^{\varepsilon}.$
\par
It is easy to see that
\begin{equation*}
\begin{array}{l}
\begin{array}{llll}
 v^{\varepsilon}(t+\theta)-v^{\varepsilon}(t)=\int_{t}^{t+\theta}u^{\varepsilon}_{xx}(s)ds-\int_{t}^{t+\theta}(u^{\varepsilon3}-u^{\varepsilon})(s)ds+\int_{t}^{t+\theta}g(u^{\varepsilon}(s))dB,
\end{array}
\end{array}
\end{equation*}
which implies
\begin{equation}\label{11}
\begin{array}{l}
\begin{array}{llll}
~~~\|v^{\varepsilon}(t+\theta)-v^{\varepsilon}(t)\|_{L^{2}(I)}
\\\leq\|\int_{t}^{t+\theta}u^{\varepsilon}_{xx}(s)ds\|_{L^{2}(I)}
+\|\int_{t}^{t+\theta}(u^{\varepsilon3}-u^{\varepsilon})(s)ds\|_{L^{2}(I)}+\|\int_{t}^{t+\theta}g(u^{\varepsilon}(s))dB\|_{L^{2}(I)}
\\\leq\int_{t}^{t+\theta}\|u^{\varepsilon}_{xx}(s)\|_{L^{2}(I)}ds
+\int_{t}^{t+\theta}\|(u^{\varepsilon3}-u^{\varepsilon})(s)\|_{L^{2}(I)}ds+\|\int_{t}^{t+\theta}g(u^{\varepsilon}(s))dB\|_{L^{2}(I)}.
\end{array}
\end{array}
\end{equation}
Taking the square in both side of (\ref{11}), we have
\begin{equation*}
\begin{array}{l}
\begin{array}{llll}
\|v^{\varepsilon}(t+\theta)-v^{\varepsilon}(t)\|_{L^{2}(I)}^{2}
\\\leq(\int_{t}^{t+\theta}\|u^{\varepsilon}_{xx}(s)\|_{L^{2}(I)}ds
+\int_{t}^{t+\theta}\|(u^{\varepsilon3}-u^{\varepsilon})(s)\|_{L^{2}(I)}ds+\|\int_{t}^{t+\theta}g(u^{\varepsilon}(s))dB\|_{L^{2}(I)})^{2}

\\\leq C\theta\int_{t}^{t+\theta}(\|u^{\varepsilon}_{xx}\|_{L^{2}(I)}^{2}+\|u^{\varepsilon3}-u^{\varepsilon}\|_{L^{2}(I)}^{2})ds+C\|\int_{t}^{t+\theta}g(u^{\varepsilon}(s))dB\|_{L^{2}(I)}^{2}
\end{array}
\end{array}
\end{equation*}
\par
We can infer from (\ref{16}) and (\ref{12}) that
\begin{equation}\label{17}
\begin{array}{l}
\begin{array}{llll}
\mathbb{E}\int_{0}^{T}\int_{t}^{t+\delta}\|u^{\varepsilon}_{xx}\|_{L^{2}(I)}^{2}dsdt
\\\leq\delta \mathbb{E}\int_{0}^{T}\| u^{\varepsilon}_{xx}(t)\|_{L^{2}(I)}^{2}dt
\\\leq C\delta,
\\
\mathbb{E}\int_{0}^{T}\int_{t}^{t+\delta}\|u^{\varepsilon3}-u^{\varepsilon}\|_{L^{2}(I)}^{2}dsdt
\\=\delta \mathbb{E}\int_{0}^{T}\|u^{\varepsilon3}-u^{\varepsilon}\|_{L^{2}(I)}^{2}dt
\\\leq C\delta [\mathbb{E}(\int_{0}^{T}\|u^{\varepsilon}_{x}\|_{L^{2}(I)}^{2}dt)^{2}+\mathbb{E}\sup\limits_{0\leq t\leq T}\|u^{\varepsilon}\|_{L^{2}(I)}^{8}+\mathbb{E}\int_{0}^{T}\|u^{\varepsilon}\|_{L^{2}(I)}^{2}dt]
\\\leq C\delta.
\end{array}
\end{array}
\end{equation}

By the Burkholder-Davis-Gundy inequality and Young's inequality, we have
\begin{equation}\label{18}
\begin{array}{l}
\begin{array}{llll}
\mathbb{E} \sup\limits_{0\leq |\theta|\leq \delta} \int_{0}^{T}\|\int_{t}^{t+\theta}g(u^{\varepsilon}(s))dB\|_{L^{2}(I)}^{2}dt
\\\leq \mathbb{E}\int_{0}^{T}\sup\limits_{0\leq |\theta|\leq \delta}\|\int_{t}^{t+\theta}g(u^{\varepsilon}(s))dB\|_{L^{2}(I)}^{2}dt
\\=\int_{0}^{T}\mathbb{E}\sup\limits_{0\leq |\theta|\leq \delta}\|\int_{t}^{t+\theta}g(u^{\varepsilon}(s))dB\|_{L^{2}(I)}^{2}dt
\\\leq C \int_{0}^{T}\mathbb{E}\int_{t}^{t+\delta}\|g(u^{\varepsilon}(s))\|_{L^{2}(I)}^{2}dsdt
\\\leq C\delta \mathbb{E}\int_{0}^{T}\|g(u^{\varepsilon}(s))\|_{L^{2}(I)}^{2}dt
\\\leq C\delta \mathbb{E}\int_{0}^{T}(1+\|u^{\varepsilon}\|_{L^{2}(I)}^{2})dt
\\\leq C\delta.
\end{array}
\end{array}
\end{equation}
It follows from (\ref{17})-(\ref{18}) that
\begin{equation*}
\mathbb{E}\sup\limits_{0\leq |\theta|\leq \delta} \int_{0}^{T}\|v^{\varepsilon}(t+\theta)-v^{\varepsilon}(t)\|_{L^{2}(I)}^{2}dt\leq C\delta.
\end{equation*}
By the regularity theory of elliptic equation
\begin{equation*}
\begin{array}{l}
\left\{
\begin{array}{llll}
u^{\varepsilon}-\varepsilon u^{\varepsilon}_{xx}=v^{\varepsilon}
\\u^{\varepsilon}(0,t)=0=u^{\varepsilon}(1,t),
\end{array}
\right.
\end{array}
\begin{array}{lll}
{\rm{in}}~I\\
\\
\end{array}
\end{equation*}
we have
\begin{equation*}
\begin{array}{l}
\begin{array}{llll}
\|u^{\varepsilon}(t)\|_{L^{2}(I)}\leq\|v^{\varepsilon}(t)\|_{L^{2}(I)},
\end{array}
\end{array}
\end{equation*}
thus, we have (\ref{19}).

\subsection{Tightness property of $\{u^{\varepsilon}\}_{0<\varepsilon<\frac{1}{2}}$ in $L^{2}(0,T;H^{1}(I))$}
\par
We may rewrite Lemma \ref{L7} in the following more convenient form.
\par
By the same way as in \cite[P919]{S2}, according to the priori estimates (\ref{15})(\ref{16}) and (\ref{19}), we obtain that
\begin{lemma}
For any $1\leq p<\infty$ and for any sequences $\mu_{m},\nu_{m}$ converging to $0$ such that the series $\sum\limits_{m=1}^{\infty}\frac{\mu_{m}^{\frac{1}{2}}}{\nu_{m}}$ converges, $\{u^{\varepsilon}\}_{0<\varepsilon<\frac{1}{2}}$ is bounded in $X_{p,\mu_{m},\nu_{m}}^{2}$ (the explicit definition of the space $X_{p,\mu_{m},\nu_{m}}^{2}$ can be found in Section 2) for any $m.$
\end{lemma}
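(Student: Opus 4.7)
The plan is to verify a uniform-in-$\varepsilon$ bound for each of the three norm components defining $X_{p,\mu_{m},\nu_{m}}^{2}$. The first two components involve $\mathbb{E}\sup_{0\leq t\leq T}\|u^{\varepsilon}(t)\|_{H^{1}(I)}^{2p}$ and $\mathbb{E}\bigl(\int_{0}^{T}\|u^{\varepsilon}(t)\|_{H^{2}(I)}^{2}dt\bigr)^{p/2}$; these are controlled directly by the a priori bounds (\ref{15}) and (\ref{16}) from Theorem \ref{Th1}, whose constants are explicitly $\varepsilon$-independent. For values of $p$ larger than those accessed by (\ref{16}), I would upgrade the first-moment energy bound to a $p$-th moment bound by applying It\^{o}'s rule to $\|v^{\varepsilon}\|_{L^{2}(I)}^{2p}$ in the spirit of Lemma \ref{L2}, using the Burkholder-Davis-Gundy inequality together with the sign-preserving cancellation $-6\varepsilon\int_{I}|u^{\varepsilon}_{x}|^{2}u^{\varepsilon 2}dx\leq 0$ already exploited in Section 5.

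For the time-shift component, I plan to estimate
\begin{equation*}
\mathbb{E}\sup_{m}\frac{1}{\nu_{m}}\left(\sup_{|\theta|\leq \mu_{m}}\int_{0}^{T-\theta}\|u^{\varepsilon}(t+\theta)-u^{\varepsilon}(t)\|_{L^{2}(I)}^{2}dt\right)^{1/2}
\end{equation*}
by first dominating the outer supremum over $m$ by the corresponding sum, then pulling the expectation inside the square root via Jensen's inequality, and finally invoking (\ref{19}) to obtain an upper bound of the form $\sum_{m}C\mu_{m}^{1/2}/\nu_{m}$. This series converges precisely under the hypothesis $\sum_{m}\mu_{m}^{1/2}/\nu_{m}<\infty$, and since the constant $C$ from (\ref{19}) is independent of $\varepsilon$, the sum bounds the third seminorm uniformly in $\varepsilon\in(0,\tfrac{1}{2})$.

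The principal obstacle is ensuring that every constant produced in the energy and time-shift estimates is genuinely uniform in $\varepsilon$. The subtle point is that the $-\varepsilon u^{\varepsilon}_{xx}$ structure in the ``BBM''-type term forces us to work throughout with the auxiliary variable $v^{\varepsilon}=u^{\varepsilon}-\varepsilon u^{\varepsilon}_{xx}$, and only after passing through the elliptic regularity inequality $\|u^{\varepsilon}\|_{L^{2}(I)}\leq \|v^{\varepsilon}\|_{L^{2}(I)}$ can one transfer control back to $u^{\varepsilon}$ without picking up $\varepsilon$-dependent constants. Once this device is used consistently — exactly as in the derivation of (\ref{19}) — the present lemma reduces to a bookkeeping consequence of Section 5 combined with the time-shift bound (\ref{19}), and the $m$-independence of the resulting bound follows from the convergence assumption on the sequences $(\mu_{m},\nu_{m})$.
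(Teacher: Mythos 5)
Your proposal follows essentially the same route as the paper, whose entire argument for this lemma is the one-line appeal to the a priori estimates (\ref{15}), (\ref{16}), (\ref{19}) ``by the same way as in [S2, p.~919]'': the first two seminorm components are read off from (\ref{15})--(\ref{16}), and the time-shift component is handled exactly as you describe, by dominating $\sup_{m}$ with $\sum_{m}$, pulling $\mathbb{E}$ inside the square root by Jensen, and summing $C\mu_{m}^{1/2}/\nu_{m}$. One small correction to your plan for the higher moments: applying It\^{o}'s rule to $\|v^{\varepsilon}\|_{L^{2}(I)}^{2p}$ cannot yield $\mathbb{E}\sup_{t}\|u^{\varepsilon}(t)\|_{H^{1}(I)}^{2p}$ uniformly in $\varepsilon$, since $\|v^{\varepsilon}\|_{L^{2}(I)}^{2}=\|u^{\varepsilon}\|_{L^{2}(I)}^{2}+2\varepsilon\|u^{\varepsilon}_{x}\|_{L^{2}(I)}^{2}+\varepsilon^{2}\|u^{\varepsilon}_{xx}\|_{L^{2}(I)}^{2}$ loses control of $\|u^{\varepsilon}_{x}\|_{L^{2}(I)}$ as $\varepsilon\to0$; one must instead work with the $H^{1}$-level energy $\|u^{\varepsilon}_{x}\|_{L^{2}(I)}^{2}+\varepsilon\|u^{\varepsilon}_{xx}\|_{L^{2}(I)}^{2}$ underlying (\ref{16}) (i.e.\ a $p$-th moment version of Proposition \ref{Pro1}(3), with the cubic term absorbed via (\ref{12}) and (\ref{15})) --- a point the paper itself glosses over, since (\ref{16}) is stated only at the level of first moments.
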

\par
Set
$$S=L^{2}(0,T;H^{1}(I))$$
and $\mathcal{B}(S)$ the $\sigma-$algebra of the Borel sets of $S.$
\par
For any $0<\varepsilon<\frac{1}{2},$ let $\Phi_{\varepsilon}$ be the map
\begin{equation*}
\begin{array}{l}
\begin{array}{llll}
\Phi_{\varepsilon}:\Omega\rightarrow S
\\~~~~~\omega\rightarrow u^{\varepsilon}(\omega),
\end{array}
\end{array}
\end{equation*}
and $\Pi_{\varepsilon}$ be a probability measure on $(S,\mathcal{B}(S))$ defined by
$$\Pi_{\varepsilon}(A)=\mathbb{P}(\Phi^{-1}_{\varepsilon}(A)), A\in \mathcal{B}(S).$$
\begin{proposition}\label{Pro3}
The family of probability measures $\{\Pi_{\varepsilon}:0<\varepsilon<\frac{1}{2}\}$ is tight in $S.$
\end{proposition}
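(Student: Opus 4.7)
The plan is to adapt the argument of Proposition 4.2 to the stronger functional setting required here. Since the Wiener process $B$ is fixed across all $\varepsilon$ (we are on a single prescribed stochastic basis rather than dealing with martingale solutions), only the tightness of the laws of $u^\varepsilon$ needs to be established, and the Brownian component from Proposition 4.2 drops out. The key inputs are the $\varepsilon$-uniform estimates (\ref{15}), (\ref{16}) and (\ref{19}), which together with the preceding lemma tell us that $\{u^\varepsilon\}$ is bounded in the Banach space $X^2_{p,\mu_m,\nu_m}$, and, in particular, $\mathbb{E}\|u^\varepsilon\|_{Y^2_{\mu_m,\nu_m}} \leq C$ uniformly in $\varepsilon \in (0,\tfrac{1}{2})$.

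First I would identify a suitable compact subset of $S = L^2(0,T;H^1(I))$. Apply Corollary \ref{c1} with the triple $X = H^2(I)$, $B = H^1(I)$, $Y = L^2(I)$ (the embedding $H^2(I) \hookrightarrow\hookrightarrow H^1(I)$ is compact by Rellich--Kondrachov in dimension one, and $H^1(I) \hookrightarrow L^2(I)$ continuously). Then the ball
$$Y^2_\rho := \left\{y \in Y^2_{\mu_m,\nu_m} \;:\; \|y\|_{Y^2_{\mu_m,\nu_m}} \leq M_\rho \right\}$$
is a compact subset of $L^2(0,T;H^1(I))$ for every choice of $M_\rho > 0$, where $Y^2_{\mu_m,\nu_m}$ is the space introduced in Section 2 (whose norm controls $\sup_t \|y(t)\|_{H^1(I)}$, the $L^2(0,T;H^2(I))$ norm, and the $H^{-1}$-scale time-translation seminorm taken in $L^2(I)$).

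Next I would apply the Chebyshev inequality together with the uniform bound on $\mathbb{E}\|u^\varepsilon\|_{Y^2_{\mu_m,\nu_m}}$:
$$\Pi_\varepsilon\bigl( S \setminus Y^2_\rho \bigr) = \mathbb{P}\bigl(\omega : \|u^\varepsilon(\omega,\cdot)\|_{Y^2_{\mu_m,\nu_m}} > M_\rho\bigr) \leq \frac{1}{M_\rho}\,\mathbb{E}\|u^\varepsilon\|_{Y^2_{\mu_m,\nu_m}} \leq \frac{C}{M_\rho}.$$
Choosing $M_\rho := C \rho^{-1}$ yields $\Pi_\varepsilon(Y^2_\rho) \geq 1 - \rho$ for every $\varepsilon \in (0,\tfrac{1}{2})$, which is precisely the tightness criterion in the Polish space $S$.

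The main obstacle is not in the probabilistic part, but in ensuring that Corollary \ref{c1} applies with these particular function spaces and that the resulting ball really is compact in $L^2(0,T;H^1(I))$; this hinges on having both spatial regularity (the $H^2$-bound from (\ref{16})) and the time-modulus-of-continuity estimate (\ref{19}) at our disposal, which is exactly what the a priori analysis of the previous subsections was designed to provide. Once these ingredients are in place, the proof reduces to the short Chebyshev-type argument sketched above, in parallel with the treatment of the $u^\varepsilon$-component in Proposition \ref{Pro4}.
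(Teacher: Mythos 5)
Your proposal is correct and follows essentially the same route as the paper: take $Y^2_\rho$ to be a ball of radius $M_\rho$ in $Y^2_{\mu_m,\nu_m}$, invoke Corollary \ref{c1} (with the uniform estimates (\ref{15}), (\ref{16}), (\ref{19}) guaranteeing $\mathbb{E}\|u^\varepsilon\|_{Y^2_{\mu_m,\nu_m}}\leq C$) to get compactness in $L^2(0,T;H^1(I))$, and conclude by Chebyshev with $M_\rho=C\rho^{-1}$. Your additional observation that the Brownian component of the Proposition \ref{Pro4} argument drops out here is exactly why the paper's proof only constructs the single compact set $Y^2_\rho$.
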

\begin{proof}
For any $\rho >0,$ we should find the compact subsets
\begin{equation*}
\begin{array}{l}
\begin{array}{llll}
Y_{\rho}^{1}\subset L^{2}(0,T;H^{1}(I)),
\end{array}
\end{array}
\end{equation*}
such that
\begin{equation}\label{20}
\mathbb{P}(\omega:u^{\varepsilon}(\omega,\cdot)\notin Y_{\rho}^{1})\leq\rho.
\end{equation}

\par
Indeed, let $Y_{\rho}^{2}$ be a ball of radius $M_{\rho}$ in $Y_{\mu_{m},\nu_{m}}^{2}$ (the explicit definition of the space $Y_{\mu_{m},\nu_{m}}^{2}$ can be found in Section 2), centered at zero and with sequences $\mu_{m},\nu_{m}$ independent of $\varepsilon,$ converging to $0$ and such that the series $\sum\limits_{m=1}^{\infty}\frac{\mu_{m}^{\frac{1}{2}}}{\nu_{m}}$ converges. From Corollary \ref{c1}, $Y_{\rho}^{2}$ is a compact subset of $L^{2}(0,T;H^{1}(I)),$ and
\begin{equation*}
\mathbb{P}(\omega:u^{\varepsilon}(\omega,\cdot)\notin Y_{\rho}^{2})\leq \mathbb{P}(\omega:\|u^{\varepsilon}\|_{Y_{\mu_{m},\nu_{m}}^{2}}>M_{\rho})
\leq \frac{1}{M_{\rho}}\mathbb{E}\|u^{\varepsilon}\|_{Y_{\mu_{m},\nu_{m}}^{2}}\leq \frac{C}{M_{\rho}},
\end{equation*}
choosing $M_{\rho}=C\rho^{-1},$ we get (\ref{20}).
\par
This proves that
$$\Pi_{\varepsilon}(Y_{\rho}^{2})\geq 1-\rho, $$
for any $0<\varepsilon<\frac{1}{2}.$
\end{proof}

\subsection{The convergence result}
The main idea in this part comes from \cite{C2,C3}.
\par
\par
The proof of Theorem \ref{Th2} is divided into several steps.
\par
\textbf{Step 1.} We prove that $u^{\varepsilon}$ converges in probability to some random variable $z\in L^{2}(0,T;H^{1}(I)).$
\par
As proved in Proposition \ref{Pro3}, the family $\mathcal{L}(u^{\varepsilon})$ is tight in $L^{2}(0,T;H^{1}(I))$.
Then, due to the Skorokhod theorem for any two sequences $\{\varepsilon_{n}\}_{n\in N}$ and $\{\varepsilon_{m}\}_{m\in N}$ converging
to zero, there exist subsequences $\{\varepsilon_{n(k)}\}_{k\in N}$ and $\{\varepsilon_{m(k)}\}_{k\in N}$ and a sequence of random
elements
$$\{\rho_{k}\}_{k\in N}:=\{(u_{1}^{k},u_{2}^{k},\hat{B}_{k})\}_{k\in N}$$
in $L^{2}(0,T;H^{1}(I))\times L^{2}(0,T;H^{1}(I))\times C([0,T];\mathbb{R})$, defined on some probability space $(\hat{\Omega},\hat{\mathcal{F}},\hat{\mathbb{P}})$,
such that
$$\mathcal{L}(\rho_{k})=\mathcal{L}(u^{\varepsilon_{n(k)}},u^{\varepsilon_{m(k)}},B),$$
namely,
$$\mathcal{L}(u_{1}^{k},u_{2}^{k},\hat{B}_{k})=\mathcal{L}(u^{\varepsilon_{n(k)}},u^{\varepsilon_{m(k)}},B),$$
for each $k\in N$, and
$\rho_{k}$ converges $\hat{\mathbb{P}}$-a.s. to some random element $\rho:=(u_{1},u_{2},\hat{B})\in L^{2}(0,T;H^{1}(I))\times L^{2}(0,T;H^{1}(I))\times C([0,T];\mathbb{R})$.
\par
We now prove $u_{1}=u_{2}.$
\par
Indeed, according to the fact that $u_{1}^{k}$ and $u_{2}^{k}$ solve (\ref{13}) with $B$ replaced by $\hat{B}_{k},$ namely, we have
\begin{equation}
\begin{array}{l}
\left\{
\begin{array}{llll}
d(u_{1}^{k}-\varepsilon_{n(k)} u_{1xx}^{k})+(- u_{1xx}^{k}+u_{1}^{k3}-u_{1}^{k})dt=g(u_{1}^{k})d\hat{B}_{k}
\\u_{1}^{k}(0,t)=0=u_{1}^{k}(1,t)
\\u_{1}^{k}(0)=u_{0}

\end{array}
\right.
\end{array}
\begin{array}{lll}
{\rm{in}}~I\times(0,T)\\
{\rm{in}}~(0,T)\\
{\rm{in}}~I
\end{array}
\end{equation}
and
\begin{equation}
\begin{array}{l}
\left\{
\begin{array}{llll}
d(u_{2}^{k}-\varepsilon_{m(k)} u_{2xx}^{k})+(-u_{2xx}^{k}+u_{2}^{k3}-u_{2}^{k})dt=g(u_{2}^{k})d\hat{B}_{k}
\\u_{2}^{k}(0,t)=0=u_{2}^{k}(1,t)
\\u_{2}^{k}(0)=u_{0}

\end{array}
\right.
\end{array}
\begin{array}{lll}
{\rm{in}}~I\times(0,T)\\
{\rm{in}}~(0,T)\\
{\rm{in}}~I,
\end{array}
\end{equation}
it holds that
\begin{equation*}
\begin{array}{l}
\begin{array}{llll}
(u_{1}^{k}(t),\varphi)+\varepsilon_{n(k)}( u_{1x}^{k}(t),\varphi_{x})
\\~~~~~~=(u_{0},\varphi)+\varepsilon_{n(k)}(u_{0x},\varphi_{x})+\int_{0}^{t}( u_{1x}^{k}(s),\varphi_{x})ds+\int_{0}^{t}(u_{1}^{k3}-u_{1}^{k},\varphi)ds+\int_{0}^{t}(g(u_{1}^{k}),\varphi)d\hat{B}_{k}(s),
\\\\
(u_{2}^{k}(t),\varphi)+\varepsilon_{m(k)}(u_{2x}^{k}(t),\varphi_{x})
\\~~~~~~=(u_{0},\varphi)+\varepsilon_{m(k)}( u_{0x},\varphi_{x})+\int_{0}^{t}( u_{2x}^{k}(s),\varphi_{x})ds+\int_{0}^{t}(u_{2}^{k3}-u_{2}^{k},\varphi)ds+\int_{0}^{t}(g(u_{2}^{k}),\varphi)d\hat{B}_{k}(s).
\end{array}
\end{array}
\end{equation*}
It follows from Vitali's convergence theorem that
\begin{equation*}
\begin{array}{l}
\begin{array}{llll}
\lim\limits_{k\rightarrow\infty}\mathbb{E}\|u_{1}^{k}-u_{1}\|_{L^{2}(0,T;H^{1}(I))}^{2}=0,
\end{array}
\end{array}
\end{equation*}
according to this equality, Theorem \ref{Th1}, \cite[P284]{B2}, \cite[P1126,Lemma 2.1]{D1} and \cite[P151,Lemma 3.1]{G1}, it is easy to see for any $\delta>0$ and any $\varphi\in H^{1}_{0}(I),$ we have
\begin{equation*}
\begin{array}{l}
\begin{array}{llll}
\lim\limits_{k\rightarrow\infty}\mathbb{P}(\|(u_{1}^{k}(t),\varphi)-(u_{1}(t),\varphi)\|_{L^{2}(0,T)}>\delta)=0,
\\
\lim\limits_{k\rightarrow\infty}\mathbb{P}(\|\int_{0}^{t}( u_{1x}^{k}(s),\varphi_{x})ds-\int_{0}^{t}( u_{1x}(s),\varphi_{x})ds\|_{L^{2}(0,T)}>\delta)=0,
\\
\lim\limits_{k\rightarrow\infty}\mathbb{P}(\|\int_{0}^{t}(u_{1}^{k3}-u_{1}^{k},\varphi)ds-\int_{0}^{t}(u_{1}^{3}-u_{1},\varphi)ds\|_{L^{2}(0,T)}>\delta)=0,
\\
\lim\limits_{k\rightarrow\infty}\mathbb{P}(\|\int_{0}^{t}(g(u_{1}^{k}),\varphi)d\hat{B}_{k}(s)-\int_{0}^{t}(g(u_{1}),\varphi)d\hat{B}(s)\|_{L^{2}(0,T)}>\delta)=0.
\end{array}
\end{array}
\end{equation*}
By the same way, we have
\begin{equation*}
\begin{array}{l}
\begin{array}{llll}
\lim\limits_{k\rightarrow\infty}\mathbb{P}(\|(u_{2}^{k}(t),\varphi)-(u_{2}(t),\varphi)\|_{L^{2}(0,T)}>\delta)=0,
\\
\lim\limits_{k\rightarrow\infty}\mathbb{P}(\|\int_{0}^{t}(u_{2x}^{k}(s), \varphi_{x})ds-\int_{0}^{t}( u_{2x}(s), \varphi_{x})ds\|_{L^{2}(0,T)}>\delta)=0,
\\
\lim\limits_{k\rightarrow\infty}\mathbb{P}(\|\int_{0}^{t}(u_{2}^{k3}-u_{1}^{k},\varphi)ds-\int_{0}^{t}(u_{2}^{3}-u_{1},\varphi)ds\|_{L^{2}(0,T)}>\delta)=0,
\\
\lim\limits_{k\rightarrow\infty}\mathbb{P}(\|\int_{0}^{t}(g(u_{2}^{k}),\varphi)d\hat{B}_{k}(s)-\int_{0}^{t}(g(u_{2}),\varphi)d\hat{B}(s)\|_{L^{2}(0,T)}>\delta)=0.
\end{array}
\end{array}
\end{equation*}
\par
By taking the limit in probability as $k$ goes to infinity, we have
\begin{equation*}
\begin{array}{l}
\begin{array}{llll}
(u_{1}(t),\varphi)
=(u_{0},\varphi)+\int_{0}^{t}(u_{1x}(s),\varphi_{x})ds+\int_{0}^{t}(u_{1}^{3}-u_{1},\varphi)ds+\int_{0}^{t}(g(u_{1}),\varphi)d\hat{B}(s),
\\\\
(u_{2}(t),\varphi)
=(u_{0},\varphi)+\int_{0}^{t}(u_{2x}(s),\varphi_{x})ds+\int_{0}^{t}(u_{2}^{3}-u_{2},\varphi)ds+\int_{0}^{t}(g(u_{2}),\varphi)d\hat{B}(s).
\end{array}
\end{array}
\end{equation*}
Then, $u_{1},u_{2}$ coincide with the unique solution of heat equation perturbed by
the noise $\hat{B},$ thus $u_{1}=u_{2}.$
\par
It follows from Lemma \ref{L1} that $u^{\varepsilon}$ converges in probability to some random variable $z\in L^{2}(0,T;H^{1}(I))$.

\par
\textbf{Step 2.} We prove that $z$ is the solution of (\ref{21}).
\par
It follows from
\begin{equation*}
\begin{array}{l}
\begin{array}{llll}
\lim\limits_{\varepsilon\rightarrow0}\mathbb{P}( \|u^{\varepsilon}-z\|_{L^{2}(0,T;H^{1}(I))}>\delta)=0

\end{array}
\end{array}
\end{equation*}
that
\begin{equation*}
\begin{array}{l}
\begin{array}{llll}
\lim\limits_{\varepsilon\rightarrow0}\mathbb{P}(\|(u^{\varepsilon}(t),\varphi)-(z(t),\varphi)\|_{L^{2}(0,T)}>\delta)=0,
\\
\lim\limits_{\varepsilon\rightarrow0}\mathbb{P}(\|\int_{0}^{t}(u^{\varepsilon}_{x}(s),\varphi_{x})ds-\int_{0}^{t}(z_{x}(s),\varphi_{x})ds\|_{L^{2}(0,T)}>\delta)=0,
\\
\lim\limits_{\varepsilon\rightarrow0}\mathbb{P}(\|\int_{0}^{t}(u^{\varepsilon3}-u^{\varepsilon},\varphi)ds-\int_{0}^{t}(z^{3}-z,\varphi)ds\|_{L^{2}(0,T)}>\delta)=0,
\\
\lim\limits_{\varepsilon\rightarrow0}\mathbb{P}(\|\int_{0}^{t}(g(u^{\varepsilon}),\varphi)dB(s)-\int_{0}^{t}(g(z),\varphi)dB(s)\|_{L^{2}(0,T)}>\delta)=0.
\end{array}
\end{array}
\end{equation*}
Noting that
\begin{equation*}
\begin{array}{l}
\begin{array}{llll}
\mathbb{E}\sup\limits_{0\leq t\leq T}|\varepsilon(u^{\varepsilon}_{x}(t),\varphi_{x})|^{2}
\\\leq \mathbb{E}\sup\limits_{0\leq t\leq T}\varepsilon^{2}\|u^{\varepsilon}_{x}(t)\|_{L^{2}(I)}^{2}\|\varphi_{x}\|_{L^{2}(I)}^{2}
\\\leq \varepsilon\|\varphi_{x}\|_{L^{2}(I)}^{2}\mathbb{E}\sup\limits_{0\leq t\leq T}\varepsilon\|u^{\varepsilon}_{x}(t)\|_{L^{2}(I)}^{2},
\end{array}
\end{array}
\end{equation*}
we have
\begin{equation*}
\begin{array}{l}
\begin{array}{llll}
\lim\limits_{\varepsilon\rightarrow0}\mathbb{E}\sup\limits_{0\leq t\leq T}|\varepsilon(u^{\varepsilon}_{x}(t),\varphi_{x})|^{2}=0.
\end{array}
\end{array}
\end{equation*}
\par
By taking the limit in probability as $\varepsilon$ goes to zero in
\begin{equation*}
\begin{array}{l}
\begin{array}{llll}
(u^{\varepsilon}(t),\varphi)+\varepsilon(u^{\varepsilon}_{x}(t),\varphi_{x})
\\~~~~~~=(u_{0},\varphi)+\varepsilon(u_{0x},\varphi_{x})+\int_{0}^{t}( u^{\varepsilon}_{x}(s),\varphi_{x})ds+\int_{0}^{t}(u^{\varepsilon3}-u^{\varepsilon},\varphi)ds+\int_{0}^{t}(g(u^{\varepsilon}),\varphi)dB(s),

\end{array}
\end{array}
\end{equation*}
we deduce that
$z$ verifies the following equation $dt\otimes d\mathbb{P}-$almost everywhere:
\begin{equation*}
\begin{array}{l}
\begin{array}{llll}
(z(t),\varphi)
=(u_{0},\varphi)+\int_{0}^{t}( z_{x}(s),\varphi_{x})ds+\int_{0}^{t}(z^{3}-z,\varphi)ds+\int_{0}^{t}(g(z),\varphi)dB(s),

\end{array}
\end{array}
\end{equation*}
that is $z$ is the solution of (\ref{21}).

\noindent \footnotesize {\bf Acknowledgements.} \par I sincerely
 thank Professor Yong Li for many useful suggestions and help.\par
 \par

{\small
}
\end{document}